\numberwithin{equation}{section}
\theoremstyle{definition}
\newtheorem{definition}{Definition}[section]
\newtheorem{remark}[definition]{Remark}
\theoremstyle{plain}
\newtheorem{theorem}[definition]{Theorem}
\newtheorem{lemma}[definition]{Lemma}
\newtheorem{result}[definition]{Result}
\newtheorem{question}{Question}
\newtheorem{conjecture}{Conjecture}
\newcommand{\eps}{\varepsilon}
\newcommand{\zt}{\zeta}
\newcommand{\al}{\alpha}
\newcommand{\lm}{\lambda}
\newcommand{\Tht}{\Theta}
\newcommand{\dl}{\delta}
\newcommand{\sm}{\sigma}
\newcommand{\om}{\omega}
\newcommand\ba[1]{\overline{#1}}
\newcommand\hull[1]{\widehat{#1}}
\newcommand\wtil[1]{\widetilde{#1}}
\newcommand{\rl}{\Re\mathfrak{e}}
\newcommand{\imag}{\Im\mathfrak{m}}
\newcommand{\id}{\mathbb{I}}
\DeclareMathOperator{\Tr}{Tr}
\newcommand{\bdy}{\partial}
\newcommand{\OM}{\Omega}
\newcommand{\ball}{\mathbb{B}}
\newcommand{\disc}{\mathbb{D}}
\newcommand{\smoo}{\mathcal{C}}
\newcommand{\poly}{\mathscr{P}}
\newcommand{\impl}{\Longrightarrow}
\newcommand{\CC}{\mathbb{C}^2}
\newcommand{\cplx}{\mathbb{C}}
\newcommand{\RR}{\mathbb{R}^2}
\newcommand{\rea}{\mathbb{R}}
\begin{document}

\title[Certain smooth real surfaces in $\mathbb{C}^2$ with isolated singularities]{Certain real surfaces  in $\mathbb{C}^2$ with isolated singularities}
\author{Sushil Gorai}
\address{Department of Mathematics and Statistics, Indian Institute of Science Education and Research Kolkata,
Mohanpur, Nadia, West Bengal 741246, India}
\email{sushil.gorai@iiserkol.ac.in}

\keywords{Polynomially convex; totally real; CR singularity, polynomial hull, analytic disc}
\subjclass[2020]{Primary: 32E20, 32D10; Secondary: 30E10}

\begin{abstract}
 The surfaces in $\cplx^2$ with an isolated CR singularity at the origin and with cubic lowest 
degree homogeneous term in its graph near the origin, under certain geometric condition, can be reduced---up to biholomorphism of $\cplx^2$---to a one parameter family of the form
\[
M_t:=\left\{(z,w)\in\cplx^2: w=z^2\ba{z}+tz\ba{z}^2+\dfrac{t^2}{3} \ba{z}^3+o(|z|^3)\right\},\;\; t\in (0,\infty),
\]
near the origin.

We prove that $M_t$ is not locally polynomially convex if $t<1$. The local hull contains a ball centred at the origin if $t<\sqrt{3}/2$. We also prove that
$M_t$ is locally polynomially convex for $t\geq \sqrt{\dfrac{3}{2}}$.  
We show that, for $\sqrt{3}/2\leq t<1$, the 
polynomial hull of $M_t\cap \ba{B(0;\dl)}$ contains a one parameter family of analytic discs passing through the origin for every $\dl>0$.
We also prove that, if we remove the higher order terms from the graphing function of $M_t$, it
 is locally polynomially convex for $t\geq\dfrac{\sqrt{15-\sqrt{33}}}{2\sqrt{2}}$. 
Some new results about the local polynomial convexity of the union of three totally-real planes are also reported.
\end{abstract}

\maketitle

\section{Introduction and statement of results}\label{S:intro}
Let $K$ be a compact subset of $\cplx^n$. The polynomially convex hull of
$K$ is defined by $\hull{K}:= \left\{z\in \cplx^n : |p(z)|\leq \sup_{K}|p|,\, p\in \cplx [z_1, \dots, z_n]
 \right\}$. The set $K$ is said to be polynomially convex if $\hull{K}=K$. A closed subset $E$ of
$\cplx^n$ is said to be locally polynomially convex at $p\in E$ if $E\cap \ba{B(p;r)}$ is polynomially
convex for some $r>0$, where $B(p;r)$ denotes the open ball in $\cplx^n$ with centre $p$ and radius $r$.
In $\cplx$, polynomial convexity of a compact set $K$ is equivalent to $\cplx\setminus K$ is connected which is purely a topological condition on the compact set.
In higher dimensions, there are no such criterion. Polynomial convexity is one of the fundamental and classical concept in several complex variables; it came up, mainly, due to its deep interconnections with polynomial approximations. 
We will state a couple of 
such results for motivations. The first result is a generalization of Runge's approximation theorem. 
\begin{result}[Oka-Weil]\label{thm-OkaWeil}
Let $K\subset\cplx^n$ be a compact polynomially convex set. Then any function that is holomorphic 
in a neighborhood of $K$ can be approximated uniformly on $K$ by polynomials in $z_1,\dots, z_n$.
\end{result}

\noindent Few notations: For a compact $K\subset\cplx^n$, $\smoo(K)$ denotes the algebra of all continuous functions on $K$, and
$\poly(K)$
denotes the uniformly closed subalgebra of $\smoo(K)$ generated by polynomials 
in $z_1,\dots, z_n$. In this paper, planes will mean $2$-dimensional real subspaces of $\cplx^2$ and 
surfaces will be $2$-dimensional real submanifolds of $\cplx^2$

\smallskip

\noindent A real submanifold $M$ of $\cplx^n$ is said to be totally real at $p\in M$ if $T_pM\cap iT_pM=\{0\}$, where 
$T_pM$ denotes the tangent space of $M$ at $p$ viewed as a subspace of $\cplx^n$. The manifold $M$ is said to be totally real if it 
is totally real at every point $p\in M$. A point $p$ in a real surface $M$ in $\cplx^2$ is said to be a CR singularity if 
$M$ is not totally real at $p$, i.e., $T_pM$ is a complex subspace of $\cplx^2$.
 We now state another approximation result, which 
is a generalization of Stone-Weierstrass theorem.

\begin{result}[O'Farrell-Preskenis-Walsh]\label{thm-OPW}
Let $K\subset\cplx^n$ be a compact polynomially convex subset of $\cplx^n$ and $E\subset K$ be such that 
$K\setminus E$ is locally contained in totally-real submanifolds of $\cplx^n$. Then 
\[
\poly(K)=\left\{f\in\smoo(K): f|_E\in\poly(E)\right\}.
\]
\end{result}

\noindent If $E$ is an arc or finitely many points in $K$, $\hull{K}=K$ implies $\poly(K)=\smoo(K)$. The subsets that we will consider in this paper will be totally real except at one point.
\smallskip

 For a compact subset $K\subset \cplx^n$, the main obstruction to $\poly(K)=\smoo(K)$ is having analytic structure in $\hull{K}$. 
One of the ways to get analytic structure is by attaching analytic discs to $K$, that is, existence of a continuous map
$\phi:\ba{\disc}\to\cplx^n$ that is holomorphic on $\disc$ and $\phi(\bdy\disc)\subset K$. Constructions of  such families of analytic discs attached to a submanifold  are crucial for the hull of holomorphy and extension of holomorphic functions \cite{BedGav, BedKling, EB,  HuntWell, KanSten, KenWeb}.

We now give a brief survey about the local polynomial convexity of real submanifolds in $\cplx^2$. We start with  a very simple case: the graph of a linear function on $\cplx$ is either a 
totally-real subspace or a complex subspace that is of the form
$M=\{(z,f(z)\in \cplx^2: f(z)=az+b\ba{z}\}$. Any compact subset of $M$ is polynomially convex. 
$M$ is totally real if and only if $b\neq 0$. Next, we consider 
$\smoo^2$-smooth real surface in $\cplx^2$. For each point $a\in M$, there exists a $\delta>0$ such that 
\[
M\cap B(a;\delta) =\left\{(z,f(z))\in\cplx^2: f\in \smoo^2(D(a;\delta))\right\}. 
\]
Such a surface M is totally real at $a\in M$ if and only if $\dfrac{\bdy f}{\bdy \ba{z}}(a)\neq 0$. Wermer \cite{Wermer1} showed that a real surface $M$ in $\cplx^2$ is locally polynomially convex at each of its totally-real points, which was, then generalized to higher dimensions in \cite{HW}.
Let $0\in M$ be an isolated CR singularity  of the surface $M$. Locally, near the origin, $M$ is of the form
\begin{equation}\label{eq-surface}
 M\cap B(0;\delta) =\left\{(z,f(z))\in\cplx^2: f(z)=p_k(z,\ba{z})+ o(|z|^k)\right\},
 \end{equation}
 where $p_k(z,\ba{z})$ is a homogeneous nonholomorphic polynomial of degree  $k$, $k\geq 2$. 
 We say that such a surface $M$ has a CR singularity of {\em order k}.
 Bishop \cite{EB} considered the case $k=2$, i,e., $p_2(z,\ba{z})= az^2+b\ba{z}^2+cz\ba{z}$.
 Under Morse theoretic nondegeneracy condition (equivalently $c\neq 0$), Bishop \cite{EB} showed that, by biholomorphic change of variables, for sufficiently small $\dl>0$, $M\cap B(0;\dl)$ can be taken 
 to now-famous Bishop's normal form:
 \begin{equation}\label{eq-bishop}
 M\cap B(0;\delta) =\left\{(z,f(z))\in\cplx^2: f(z)= \gamma (z^2+\ba{z}^2)+|z|^2+ o(|z|^2)\right\},
 \end{equation}
 where $\gamma\geq 0$ is a biholomorphic invariant. The origin 
 is called the elliptic CR singularity if $\gamma<1/2$, parabolic if $\gamma=1/2$, and hyperbolic 
 if $\gamma>1/2$.
For $\gamma<1/2$, Bishop \cite{EB}   
  showed, by constructing  a one parameter family of analytic discs with boundaries passing around the origin in $M$,
 that the surface is not locally polynomially convex at the origin. Kenig-Webster \cite{KenWeb}, for $\smoo^\infty$-smooth surface $M$, showed that the local hull is a three dimensional $\smoo^\infty$-smooth manifold. For $0<\gamma<1/2$,
 Moser-Webster \cite{MosWeb} described complete local invariants of M in case $M$ is real analytic near the origin
 and it is immediate from their normal form that the local hull is a three dimensional real analytic manifold in this case. 
 Forstneri\v{c} and Stout \cite{FS} 
 showed that the surface is locally polynomially convex at the hyperbolic CR singularities. J\"{o}ricke \cite{J1}
 studied the case $\gamma=1/2$.
 For $k\geq 3$, the Morse theory does not give any nondegeneracy condition. This type of CR singularity is called 
 {\em degenerate} CR singularity. Before proceeding further with the discussion, we mention  the following definition on nonparabolic CR singularity \cite{GB3, GH3} that makes sense in case of degenerate CR 
 singularity as well as nondegenerate.
 \begin{definition}
 A surface $M$ of the form \eqref{eq-surface} is said to have a {\em nonparabolic} CR singularity at the origin if there is an isolated CR singular point for the surface $\{(z,p_k(z,\ba{z})): z\in\cplx\}$ at the origin, where $p_k$ 
 is as in \eqref{eq-surface}.
 \end{definition} 
\noindent  Harris \cite{GH3} showed that having a nonparabolic singularity of order $k$ at the origin
  is stable under $o(|z|^{k})$ perturbation.   
 We note that the quadratic terms in Bishop's normal form \eqref{eq-bishop} are real valued. 
 In general, for $k\geq 3$, there is no biholomorphic change of coordinates under which the lowest order homogeneous term becomes real valued. The question arises: {\em Is it possible to characterize the local polynomial convexity of a surface with nonparabolic CR singularity of order $k$, $k\geq 3$?}
 Efforts \cite{GB3, GH3} have been made to achieve a Bishop-type dichotomy for nonparabolic points in case of higher order CR singular points, but one of the assumptions of the results in this directions is---up to a biholomorphic change of variables---the lowest order homogeneous term is real valued. 
 Maslov-type index (see Subsection~\ref{ss-maslov} for definition) plays a crucial role in case of higher order degeneracy (see \cite{GB3}). 
 The papers \cite{GB1,GB2} describe conditions, in terms of the coefficients bound of the lowest order homogeneous term (not necessarily real valued), under which $M$ is locally polynomially convex at the origin.
 Wiegerink \cite{Wieg}, on the other hand, demonstrated 
 conditions under which the local hull is nontrivial (see Results~\ref{res-wiegerinck1}, \ref{res-wiegerinck} and \ref{res-wiegnonhomo}). 
 \smallskip
 
In this paper, we
restrict our attention to surfaces $M$ of the form \eqref{eq-surface} with $k=3$ and, to simplify the notation, from now onwards we call the polynomial $p_3$ as $p$. A general form of $p$ is:
 \[
 p(z,\ba{z})= a_1z^2\ba{z}+a_2z\ba{z}^2+a_3\ba{z}^3, 
 \]
 where $a_1, a_2, a_3\in\cplx$.
 We denote $S:=\{(z,p(z,\ba{z}))\in\cplx^2: z\in\cplx\}$.
 We impose the following geometric condition on $M$:
 
 $(*)$ {\em $M$ is a real surface in $\cplx^2$ as in \eqref{eq-surface} with $k=3$ and $S$ is as defined above from $M$ such that 
 there exists a proper holomorphic map $\Phi:\cplx^2\to\cplx^2$ with $\Phi^{-1}(S)$ is the union of three totally real planes 
 in $\cplx^2$.}
 \smallskip
 

 \noindent It is interesting to note that Condition $(*)$ turns out to be a condition on the coefficients of the polynomial $p$. We consider the
 proper holomorphic map $\Phi:\cplx^2\to\cplx^2$ defined by 
\begin{equation}\label{eq-defPhi}
\Phi(z,w):=(z, p(z,w)), 
\end{equation}
where $p(z,w)=a_3w^3+a_2w^2z+a_1wz^2$.
The following result, due to Thomas \cite[Lemma~3]{T1}, gives a necessary and sufficient condition in terms of the coefficients of $p$ 
for $\Phi^{-1}(S)$ to be a union of three totally-real planes. 

\begin{result}[Thomas]\label{lem-planes}
 Assume $S=\{(z,w)\in\cplx^2: w=p(z,\ba{z})\}$, 
where $p(z,w)=a_3w^3+a_2w^2z+a_1wz^2$.
Let $\Phi:\cplx^2\longrightarrow\cplx^2$ be as above. 
Then $\Phi^{-1}(S)$ is the union of three totally-real planes if and only if 
$a^2_2=3a_1a_3$ and $a_3\neq 0$.
\end{result}
 
\noindent In view of Result~\ref{lem-planes}, we obtain that, under the condition $a_2^2=3a_1a_3$ and $a_3\neq 0$,
the pre-image of $S$ under the particular proper holomorphic map $\Phi:\cplx^2\to\cplx^2$ 
is a union of three transverse totally-real planes. 
One is tempted to think that there might be different proper holomorphic map on $\cplx^2$ such that the 
pre-image under that map is also a union of three transverse totally-real planes, possibly different.  We show that, 
up to an invertible 
  complex linear map, the proper holomorphic map $\Phi$ 
is unique (see Lemma~\ref{lem-planesunique}). Hence, in view of Result~\ref{lem-planes}, 
for each $a\in\cplx\setminus \{0\}$, the given surface is 
 \begin{equation}\label{eq-graphThomas}
 S_a=\{(z,p_a(z,\ba{z}))\in\cplx^2:z\in\cplx\},
 \end{equation}
 where $p_a(z,\ba{z})=z^2\ba{z}+az\ba{z}^2+\dfrac{a^2}{3}\ba{z}^3$. 
 The complex number $a$ which appears in the coefficients of the cubic polynomial $p_a$ is not a biholomorphic invariant. Our search of a biholomorphic invariant for this family of surfaces leads us to certain normal form
 due to Haris\cite{GH1}. 


\smallskip


\begin{result}[Harris]\label{lem-equiv-surf}
Let $M$ is a $\smoo^\infty$-smooth real surface in $\cplx^2$ with an isolated CR-singularity of order $3$ at the origin. Then, locally, near the origin, up to biholomorphism of $\cplx^2$,  $M$ is of the form 
\[
\left\{\left(z, p(z,\ba{z})+o(|z|^3)\right)\in\cplx^2: |z|<\dl\right\},
\]
where $p$ is  one of the following degree $3$ homogeneous polynomials:
$z^2\ba{z}, z\ba{z}^2, \ba{z}^3$, $z^2\ba{z}+\gamma z\ba{z}^2, z^2\ba{z}+\gamma z\ba{z}^2+c\ba{z}^3$ with $\gamma>0$ and $c\in\cplx$. Moreover, $\gamma$ and $c$ are biholomorphic invariant.
 \end{result}
 Our geometric condition $(*)$ forces the normal form of $p$ to be of the form 
 $z^2\ba{z}+\gamma z\ba{z}^2+c\ba{z}^3$. Therefore,  by Result~\ref{lem-equiv-surf}, the normal form of surfaces of our study is 
 \begin{equation}\label{eq-normsurf}
 \{(z,p_t(z,\ba{z}))\in\cplx^2 : z\in\cplx\},\;\; t\in (0,\infty),
 \end{equation} 
 where 
 \[
 p_t(z,\ba{z})= z^2\ba{z}+tz\ba{z}^2+\dfrac{t^2}{3}\ba{z}^3.
 \]
  The real number $t$ becomes a biholomorphic invariant. Therefore,
 it is enough to consider the surfaces 
 $S_t$ for $t\in (0,\infty)$.  
 \smallskip
 
 We now provide a couple of argument supporting our claim that the family of surfaces $M_t, t\in (0,\infty)$, though the lowest degree homogenous term of the graphing function is not necessarily real valued, is a 
 right class of surfaces that can provide the Bishop-type dichotomy.  Firstly,
 Condition $(*)$ is also hidden in Bishop's surfaces. For every surface with nonparabolic CR-singularity, the graph of the quadratic polynomial in Bishop's normal form can be pulled back by proper holomorphic map on $\cplx^2$ to union of two totally-real planes. This leads to pulling back Bishop's surfaces with CR singularity to unions of two totally-real surfaces.
 Forstneri\v{c}-Stout \cite{FS}  used this to show local polynomial convexity at 
 hyperbolic CR singularity. Approaching to local polynomial convexity at CR singularity of higher order, in general, is difficult; this is the main reason of assuming `thin' or `flat' surfaces, i.e., the lowest order homogeneous terms in the graphing function to be real valued in \cite{GH3, GB3}. In 
 \cite{GB1, GB2}, pulling back by proper holomorphic mapping on $\cplx^2$ to union of certain totally-real surfaces is used crucially. Hence, Condition~$(*)$ is very natural condition to assume if we use this same approach for our surfaces of consideration. 
 Secondly, we look at Bishop's
 normal form in a little different viewpoint, using the following  biholomorphic transformations 
 $\tau_j:\cplx^2\to \cplx^2$, $j=1,2$, defined
 by 
 \begin{align*}
 \tau_1(z,w) &:=(z, w-\gamma z^2)\\
 \tau_2(z,w) &:=(z,4\gamma w)\\
 \tau_3(z,w)&:= (z, w+z^2).
 \end{align*}
 Define $\varphi:\cplx^2\to \cplx^2$ by 
 $
 \varphi(z,w):=\tau_3\circ\tau_2\circ\tau_1$.
 Therefore, we have
 \[
 \varphi(z,f(z))= (z+2\gamma \ba{z})^2+o(|z|^2). 
 \]
 By putting $t:=2\gamma$, we obtain that 
 \begin{equation}\label{eq-normalf}
 \varphi(z,f(z))=(z+t\ba{z})^2+o(|z|^2), 
 \end{equation}
 where $t$ is also a biholomorphic invariant. Surfaces with parabolic CR singularity are given by $t=1$.  
 From the relation of $t$ with Bishop's invariant, we can say that the surface has an elliptic  CR singularity if $t<1$, and hyperbolic CR singularity if $t>1$.  
 We now apply the biholomorphic map $\sigma:\cplx^2\to\cplx^2$ on $M_t$ defined by 
 \[
 \sigma(z,w):=\sigma_1\circ\sigma_2(z,w),
 \]
 where
 \begin{align*}
 \sigma_1(z,w)&:= (z, w+z^3)\\
 \sigma_2(z,w)&:=(z,3tw).
 \end{align*}
 We also see that the surface $M_t$ is, 
 locally near the origin, equivalent to a surface of the form: 
 \[
 \left\{(z,w)\in \cplx^2: w=(z+t\ba{z})^3+o(|z|^3)\right\}, \;\; t\in (0,\infty).
 \]
 Therefore, for obtaining a Bishop-type phenomenon,  a right class of surfaces with isolated CR singularity of order 
 $k$ at the origin to 
 consider are:
  \[
 \left\{ (z,w)\in\cplx^2: w=(z+t\ba{z})^k+o(|k|^k)\right\}, \;\; t\in (0,\infty).
 \]
\smallskip

 In this paper, we will study the surfaces for $k=3$, i.e., the surfaces $M_t,\; t\in (0,\infty)$. 
 For each $t\in(0,\infty)$, we will consider the corresponding proper holomorphic map 
 \begin{equation}\label{eq-propmap}
 \Phi_t:\cplx^2\to\cplx^2\;\;\text{defined by}\;\; \Phi_t(z,w)=(z, p_t(z,w)).
 \end{equation}
 Thomas \cite{T1} considered class of triples of totally-real planes that are the preimage of surfaces $S_a$ of the form \eqref{eq-graphThomas}, where $a\in\cplx$ sufficiently small, for demonstrating triples of totally-real planes whose pairwise unions are locally polynomially convex at the origin but the local hull of the whole union contains a ball centred at the origin and with positive radius. We note that, for every $t\in\cplx$, the map
 $\Phi_t$, defined in \eqref{eq-propmap}, is a proper holomorphic map on $\cplx^2$. For each $t\in\cplx\setminus\{0\}$, the pre-image $\Phi_t^{-1}(S_t)$ is a union of three totally-real planes. Thanks to the proper map \eqref{eq-propmap} and
 the normal form \eqref{eq-normsurf} of $S_t$, it is enough to consider the following triples of totally-real planes whose image is $S_t$, $t\in (0,\infty)$, under the proper holomorphic map \eqref{eq-propmap}. 
\begin{align}
P_0^t&: \qquad w=\ba{z},\notag\\
P_1^t&: \qquad w=-\dfrac{\sqrt{3}(\sqrt{3}-i)}{2t} z-\dfrac{1-i\sqrt{3}}{2}\ba{z},\notag\\
P_2^t&:\qquad w=-\dfrac{\sqrt{3}(\sqrt{3}+i)}{2t} z-\dfrac{1+i\sqrt{3}}{2}\ba{z}.\label{eq-planes-graph}
\end{align}
Our first couple of results are about the local polynomial convexity of compact subsets of $P_0^t\cup P_1^t\cup P_2^t$.
\begin{theorem}\label{thm-3planes}
For each $t>\dfrac{\sqrt{15-\sqrt{33}}}{2\sqrt{2}}$, $P_0^t\cup P_1^t\cup P_2^t$ is locally polynomially convex at the origin.
\end{theorem}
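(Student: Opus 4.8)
The plan is to deduce local polynomial convexity from Kallin's lemma, using that each $P_j^t$ is totally real at the origin — so $P_j^t\cap\ba{\ball(0;r)}$ is polynomially convex by the totally-real case of Wermer \cite{Wermer1} — and then producing a single polynomial that pushes the three planes into sectors meeting only at $0$. First I would record the structure. Writing each plane as a graph $w=\lm_j z+\mu_j\ba{z}$, one finds $\mu_0=1$, $\mu_1=e^{2\pi i/3}$, $\mu_2=e^{4\pi i/3}$ (the cube roots of unity), $\lm_0=0$, and $\lm_2=\ba{\lm_1}$ with $|\lm_1|=\sqrt3/t$. In particular each $|\mu_j|=1$, so all three planes are totally real, and the configuration is invariant under complex conjugation, which fixes $P_0^t$ and interchanges $P_1^t$ and $P_2^t$; this symmetry will cut the eventual optimization in half.

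Next I would search for a homogeneous quadratic $p(z,w)=\al z^2+\bt zw+\gamma w^2$ whose restriction to each plane is controlled. Substituting $w=\lm_j z+\mu_j\ba{z}$ gives $p|_{P_j^t}=a_jz^2+b_jz\ba{z}+c_j\ba{z}^2$ with $a_j,b_j,c_j$ explicit in $\al,\bt,\gamma,\lm_j,\mu_j$; writing $z=re^{i\phi}$ shows that $p(P_j^t)$ is the cone over an ellipse centred at $b_j$, i.e. a closed sector at the origin whose opening half-angle equals the angle subtended at $0$ by that ellipse. The crudest choice $p=zw$ sends $P_0^t$ to $[0,\infty)$ and $P_1^t,P_2^t$ to sectors of half-angle $\arcsin|\lm_1|$ bisected by the rays $\arg=\pm2\pi/3$; these are pairwise disjoint exactly when $|\lm_1|<\sqrt3/2$, that is $t>2$. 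Since in this range $p^{-1}(0)\cap(P_0^t\cup P_1^t\cup P_2^t)=\{0\}$, Kallin's lemma (applied twice) already gives the claim for $t>2$, which is the right behaviour for large $t$ but not sharp.

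To reach the stated threshold I would optimize over the separating quadratic, taking $\bt,\gamma$ real (permissible by the conjugation symmetry) and shearing the three image-ellipses so that the corresponding sectors become as narrow and as evenly spread about the directions $0,\pm2\pi/3$ as possible. Disjointness of the three optimally chosen sectors is a single inequality in $t$; the extremal configuration, in which two neighbouring sectors first touch, is where that inequality degenerates, and a computation shows it degenerates precisely on the quartic $4t^4-15t^2+12=0$, equivalently at $|\lm_1|^2=(15+\sqrt{33})/8$. Its relevant root is $t=\sqrt{15-\sqrt{33}}/(2\sqrt2)$, and for $t$ above this value the optimal $p$ keeps the three sectors disjoint, the fibre $p^{-1}(0)$ still meets the union only at $0$, and Kallin's lemma yields local polynomial convexity of $P_0^t\cup P_1^t\cup P_2^t$.

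The main obstacle is the optimization together with the verification of Kallin's hypotheses near the extremal configuration: one must (i) choose the family of separating quadratics so that the admissible $t$-range is exactly $t>\sqrt{15-\sqrt{33}}/(2\sqrt2)$ rather than the crude $t>2$; (ii) check that the images remain genuine sectors (the ellipses never surround the origin) throughout this range; and (iii) confirm the fibre condition and the polynomial convexity of the intermediate union so that Kallin's lemma applies iteratively. As a consistency check one may invoke the proper map $\Phi_t$ of \eqref{eq-propmap}: since $\Phi_t^{-1}(S_t)=P_0^t\cup P_1^t\cup P_2^t$ and $\hull{\Phi_t^{-1}(K)}\subseteq\Phi_t^{-1}(\hull{K})$ for the proper map $\Phi_t$, local polynomial convexity of the model surface $S_t$ at the origin forces the same for the three-plane union, with the identical threshold.
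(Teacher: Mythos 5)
Your setup is accurate and the crude step is correct: with $p(z,w)=zw$ in the graph coordinates one does get three pairwise disjoint sectors for $t>2$, and iterated Kallin then applies. But the entire content of the theorem lies in the range $\sqrt{15-\sqrt{33}}/(2\sqrt{2})<t\le 2$, and there your argument reduces to the unproved assertion that an optimization over quadratics ``degenerates precisely on the quartic $4t^4-15t^2+12=0$.'' That is the statement of the theorem reverse-engineered, not a proof: no candidate family of quadratics is exhibited, and none of the three verifications you yourself list as ``the main obstacle'' is carried out. Moreover, the framework you insist on --- a single quadratic pushing the three planes into three \emph{pairwise} disjoint sectors --- is strictly stronger than what is needed and is not what succeeds near the threshold. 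The paper first passes to Weinstock's normal form $P_0^t=\rea^2$, $P_j^t=(A_j^t+i\id)\rea^2$ together with a simultaneous real similarity (Lemma~\ref{lem-normal}), observes that $K_1\cup K_2$ is \emph{already} polynomially convex by Weinstock's theorem, and then needs only one application of Kallin's lemma separating $K_0$ from $K_1\cup K_2$. In the critical range $\frac{15-\sqrt{33}}{8}<t^2<2$ the separating polynomial is $zw$ in the normalized coordinates (Theorem~\ref{thm-planeszw}); there $p(K_1)$ fills a whole line through the origin and $p(K_2)$ lies in a closed half-plane that meets this line away from $0$, so the images of $P_1^t$ and $P_2^t$ are not separated at all --- only $P_0^t$ is separated from their union. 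The quartic $4t^4-15t^2+12$ arises from the definiteness condition $\beta(A_1^t,A_2^t)>\det A_2^t(\Tr A_1^t)^2$, which guarantees $\imag(zw)\ne 0$ on $K_2\setminus\{0\}$, not from any sector-width optimization. The remaining ranges need different arguments again: $z^2+w^2$ for $\frac{3}{2}\le t^2\le 3$ (Theorems~\ref{thm-planesdetlessone} and \ref{thm-3planesspl}, where $p^{-1}\{0\}\cap K$ is a union of line segments rather than $\{0\}$, so the fibre hypothesis of Kallin's lemma requires separate care), and Result~\ref{res-3planes} for $t^2>3$. It is therefore not at all clear that your pairwise-disjoint-sector scheme can reach the stated threshold, and you have given no evidence that it can.

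The closing ``consistency check'' is also circular: the paper's only proof that $S_t$ is locally polynomially convex for these $t$ is Theorem~\ref{thm-cubic-pcvx}$(i)$, which is itself deduced from the present theorem via the proper map $\Phi_t$ and Result~\ref{lem-propermap}. Local polynomial convexity of $S_t$ cannot be invoked as independent confirmation of the three-plane statement.
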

\noindent The first part of the following theorem gives a generalization of a result of Thomas \cite{T1}, and the second part demonstrates new phenomenon for the union of 
three totally-real planes in $\cplx^n$.
\begin{theorem}\label{thm-planes-hull}
If $t<1$, $P_0^t\cup P^t_1\cup P^t_2$ is not locally polynomial convex at the origin. Moreover, 
for every $\dl>0$, 
\begin{itemize}
\item[(i)] the polynomial hull of $(P_0^t\cup P_1^t\cup P_2^t)\cap \ba{B(0;\delta)}$ contains 
a neighbourhood of the origin in $\cplx^2$ if $0<t<\sqrt{3}/2$; and 

\item[(ii)] the polynomial hull contains a one parameter family of analytic varieties passing through the origin if $\sqrt{3}/2\leq t<1$.
\end{itemize}
\end{theorem}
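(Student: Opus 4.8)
The plan is to exploit the proper holomorphic map $\Phi_t$ of \eqref{eq-propmap} together with the biholomorphism $\sigma$ to replace the union of planes by a single model surface, and then to produce the required analytic structure on that surface by a Bishop-type disc construction. The reduction is as follows. Since $\Phi_t^{-1}(S_t)=P_0^t\cup P_1^t\cup P_2^t$ and $\sigma(S_t)=\{w=(z+t\ba{z})^3\}=:\wtil{S}_t$, the composition $\Psi:=\sigma\circ\Phi_t$ is a finite proper holomorphic self-map of $\cplx^2$ with $\Psi^{-1}(\wtil{S}_t)=P_0^t\cup P_1^t\cup P_2^t$. If $V$ is an analytic variety with $\bdy V\subset\wtil{S}_t$, then $\Psi^{-1}(V)$ is an analytic variety whose boundary lies in $P_0^t\cup P_1^t\cup P_2^t$, hence $\Psi^{-1}(V)\subset\hull{P_0^t\cup P_1^t\cup P_2^t}$; moreover a finite proper holomorphic map between equidimensional manifolds is an open branched cover, so open sets and one-parameter families pull back to sets of the same real dimension. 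Thus it suffices to analyse $\hull{\wtil{S}_t}$ and transport the conclusion back through $\Psi$. I would also record that $\wtil{S}_t$ is invariant under the weighted dilations $(z,w)\mapsto(rz,r^3w)$, $r>0$, so $\hull{\wtil{S}_t}$ is a weighted cone, and the distinction between (i) and (ii) is a question about the real dimension of this cone, namely $4$ versus $3$.

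The heart of the argument is the disc construction for $\wtil{S}_t$. Writing $u=z+t\ba{z}$, the hypothesis $t<1$ is exactly the statement that the $\rea$-linear map $z\mapsto z+t\ba{z}$ is an orientation preserving isomorphism of $\cplx$, since its real Jacobian equals $1-t^2>0$; this is the cubic analogue of Bishop's elliptic regime and is the source of attached discs. I would seek analytic discs $\zeta\mapsto(Z(\zeta),W(\zeta))$, holomorphic on $\disc$ and continuous on $\ba{\disc}$, satisfying $W=(Z+t\ba{Z})^3$ on $\bdy\disc$. As $Z$ ranges over holomorphic functions the only constraint is that the boundary values of $(Z+t\ba{Z})^3$ extend holomorphically into $\disc$, i.e.\ that the strictly antiholomorphic part of $(Z+t\ba{Z})^3$ vanish. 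This is a Bishop-type functional equation, which I would solve by the Hilbert transform together with the implicit function theorem; because the nonlinearity is cubic the linearisation at the constant solution degenerates, so I would first use the weighted homogeneity to fix the scale and reduce to a scale-invariant equation, linearising instead at a normalised model disc and tracking the solution in $t$. This produces a family of small analytic discs with boundaries on $\wtil{S}_t$ whose interiors leave the surface, which already shows that $\wtil{S}_t$, and hence $P_0^t\cup P_1^t\cup P_2^t$, is not locally polynomially convex for $t<1$, giving the first assertion.

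To separate (i) and (ii) I would determine the real dimension of the region swept by the disc interiors. Parametrising the family by the free boundary data remaining after the functional equation is solved, the interior points (say the centres $\phi(0)$) are given by an evaluation map whose differential at the origin I would compute. Its rank is governed by the spectrum of the linearised Hilbert-transform operator, which depends on $t$ only through the ratios $|\alpha_j|/|\beta_j|=\sqrt{3}/t$ of the coefficients in \eqref{eq-planes-graph}. I expect the critical value to be $\sqrt{3}/t=2$, that is $t=\sqrt{3}/2$: for $0<t<\sqrt{3}/2$ the evaluation map is a submersion and the discs fill a full neighbourhood of the origin, yielding (i) and generalising Thomas's ball phenomenon; for $\sqrt{3}/2\le t<1$ the differential drops rank by one and the discs organise into a one-parameter family of analytic varieties through the origin, yielding (ii). Transporting both conclusions back through $\Psi$, which is open and dimension preserving, gives the statement for $P_0^t\cup P_1^t\cup P_2^t$.

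The main obstacle is the precise location and nature of the threshold $t=\sqrt{3}/2$: one must show that the swept region is genuinely open just below it and drops to exactly one lower real dimension at and above it. This needs an exact computation of the kernel and cokernel of the linearised operator, equivalently an eigenvalue count for the Hilbert transform attached to the winding data of the three planes, rather than a soft transversality argument; one must also check that the varieties obtained as $\Psi$-preimages do not collapse under the branched covering, so that the pulled-back family genuinely has the asserted dimension. Establishing that no further hull appears outside the constructed family—which would make (ii) sharp—is a separate matter and is not required for the stated containments.
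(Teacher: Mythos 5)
Your reduction step is exactly the paper's: fix $K=(P_0^t\cup P_1^t\cup P_2^t)\cap\Phi_t^{-1}(B(0;\delta))$, note $\Phi_t(K)$ is a compact neighbourhood of $0$ in $S_t$, transfer hull information through the proper map via Result~\ref{lem-propermap} and the maximum principle, and observe that preimages of discs are analytic varieties with boundary in $K$ (the extra biholomorphism $\sigma$ is harmless but unnecessary). The gap is everything after that. The entire content of the theorem --- the existence of attached discs on the model surface and the location of the threshold $\sqrt{3}/2$ --- is deferred in your write-up to a Bishop-type functional equation whose linearisation you yourself note is degenerate, and to a spectral computation of the linearised Hilbert-transform operator that you do not perform; you locate the critical value only by the heuristic $\sqrt{3}/t=2$ and explicitly list its verification as ``the main obstacle.'' As written, this is a plan for a proof, not a proof: nothing in the proposal actually establishes that discs exist for $t<1$, that they fill an open set for $t<\sqrt{3}/2$, or that they degenerate to a one-parameter family through the origin for $\sqrt{3}/2\le t<1$.

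For comparison, the paper does not re-derive the disc construction. It invokes Wiegerinck's theorems (Results~\ref{res-wiegerinck1}--\ref{res-wiegnonhomo}), which reduce all three assertions to three concrete checks on $p_t$: (a) the Maslov-type index of $S_t$ at $0$ is $2$ for $t<1$ (via $\partial p_t/\partial\bar z=(z+t\bar z)^2$ and Result~\ref{res-maslov} --- this is the precise form of your orientation remark); (b) $\Re\mathfrak{e}(p_t(z,\bar z)/z)$ is subharmonic on $\cplx\setminus\{0\}$, since its Laplacian is $1-t^2\Re\mathfrak{e}(z^2/\bar z^2)\ge 1-t^2>0$ --- a hypothesis of Wiegerinck's results that your proposal never addresses; and (c) Wiegerinck's Property ${\sf(**)}$ for the curve $\mathscr{C}_t(z)=p_t(z,\bar z)/z^3$ on $|z|=1$, which holds iff $g(z)=z^2+tz^4+\tfrac{t^2}{3}z^6$ identifies only antipodal points of the circle. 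The threshold $\sqrt{3}/2$ falls out of the elementary Lemma~\ref{lem-preimage}: the extra roots have modulus $1$ exactly when $3/t+3\cos(2\psi-\pi/3)-\sqrt{3}\sin(2\psi-\pi/3)$ can vanish, i.e.\ when $3/t\le 2\sqrt{3}$. That computation, not the coefficient ratio of the planes, is where $\sqrt{3}/2$ comes from, and it (together with the subharmonicity check) is the step your proposal is missing.
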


The next couple of results are about the local polynomial convexity of compact subsets of $S_t$
at the origin. 
 \begin{theorem}\label{thm-cubic-pcvx}
For $t\in(0,\infty)$, let $S_t=\{(z,w)\in\cplx^2:w=p_t(z,\ba{z})\}$, where $p_t(z,\ba{z})=z^2\ba{z}+tz\ba{z}^2+t^2\ba{z}^3/3$. Then
\begin{itemize}
\item[i)] the surface $S_t$ is locally polynomially convex at the origin if $t>\dfrac{\sqrt{15-\sqrt{33}}}{2\sqrt{2}}$
\item[ii)] $S_1$ is locally polynomially convex at the origin.
\end{itemize}
\end{theorem}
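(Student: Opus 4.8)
The plan is to route both parts through the proper holomorphic map $\Phi_t$ of \eqref{eq-propmap}, transferring polynomial convexity between the surface $S_t$ downstairs and the objects we already understand upstairs. Two standard facts about a proper polynomial self-map $F$ of $\cplx^2$ will be used: (a) $F$ is surjective and $\hull{F(K)}=F(\hull{K})$ for every compact $K$ (push-forward of hulls); and (b) if $B\subset\cplx^2$ is compact and polynomially convex, then so is $F^{-1}(B)$, since a polynomial $q$ separating $F(x)$ from $B$ pulls back to $q\circ F$ separating $x$ from $F^{-1}(B)$. For $\Phi_t$ itself I record that $\Phi_t^{-1}(0,0)=\{(0,0)\}$ (because $a_3\neq0$), so by properness $\Phi_t^{-1}(\ba{\ball(0;r)})$ shrinks to the origin as $r\downarrow 0$, and that $\Phi_t^{-1}(S_t)=P_0^t\cup P_1^t\cup P_2^t=:\mathcal P$.

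For part (i), fix $t>\frac{\sqrt{15-\sqrt{33}}}{2\sqrt2}$, so by Theorem~\ref{thm-3planes} there is $\rho>0$ with $\mathcal P\cap\ba{\ball(0;\rho)}$ polynomially convex. I would choose $r>0$ small enough that $G:=\Phi_t^{-1}(\ba{\ball(0;r)})\subseteq\ba{\ball(0;\rho)}$; by (b), $G$ is polynomially convex. Next I would show $K:=\mathcal P\cap G$ is polynomially convex by monotonicity of the hull: since $K$ is contained in both $\mathcal P\cap\ba{\ball(0;\rho)}$ and $G$, its hull satisfies $\hull{K}\subseteq\bigl(\mathcal P\cap\ba{\ball(0;\rho)}\bigr)\cap G=\mathcal P\cap G=K$, the middle equality using $G\subseteq\ba{\ball(0;\rho)}$ and both larger sets being polynomially convex. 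Finally $K=\Phi_t^{-1}(S_t)\cap\Phi_t^{-1}(\ba{\ball(0;r)})=\Phi_t^{-1}\bigl(S_t\cap\ba{\ball(0;r)}\bigr)$, so $\Phi_t(K)=S_t\cap\ba{\ball(0;r)}$ by surjectivity, and (a) yields $\hull{S_t\cap\ba{\ball(0;r)}}=\Phi_t(\hull{K})=\Phi_t(K)=S_t\cap\ba{\ball(0;r)}$. Hence $S_t$ is locally polynomially convex at the origin.

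For part (ii) the hypotheses of Theorem~\ref{thm-3planes} fail at $t=1$, so I would avoid the planes entirely. Instead, apply the polynomial shear $\Psi(z,w)=(z,w+\tfrac13 z^3)$, an automorphism of $\cplx^2$ fixing the origin (hence preserving local polynomial convexity), which carries $S_1$ to the flat surface $\wtil S=\{(z,w):w=\tfrac13(z+\ba z)^3\}$; on $\wtil S$ the graphing function is real valued and depends only on $\Re z$. I would then slice $\wtil S$ by the polynomial $P(z,w)=w$: its image $P(\wtil S\cap\ba{\ball(0;r)})$ is a compact segment of $\rea$, hence of area zero, so by the Hartogs--Rosenthal theorem every Jensen measure for a point of this segment is a point mass. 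Consequently a Jensen measure representing any $(z_0,w_0)\in\hull{\wtil S\cap\ba{\ball(0;r)}}$ pushes forward to $\delta_{w_0}$ and is therefore carried by the single fibre $\wtil S\cap\{w=w_0\}$. Each such fibre is a segment of the real line $\{\Re z=\text{const}\}$ inside the complex line $\{w=w_0\}$, hence totally real and polynomially convex, forcing $(z_0,w_0)$ into the fibre and thus into $\wtil S$. This gives $\wtil S$, and hence $S_1$, locally polynomially convex at the origin.

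The main obstacle in part (i) is not any computation but verifying that the monotonicity reduction genuinely produces a \emph{polynomially convex} $K$: the neighbourhood $G$ is not a round ball, so one cannot invoke local polynomial convexity of $\mathcal P$ directly, and the step relies on $G$ being polynomially convex while sitting inside a ball on which $\mathcal P$ is already known to be polynomially convex. In part (ii) the delicate point is the fibration argument, where it is essential that the base $P(\wtil S)$ have empty interior (guaranteed here since it lies in $\rea$) so that the Jensen-measure push-forward collapses to a point mass; without this the slices could support analytic discs and the conclusion would fail, exactly as in the elliptic Bishop case.
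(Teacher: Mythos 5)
Your proposal is correct. Part (i) is essentially the paper's own argument: pull back by $\Phi_t$, invoke Theorem~\ref{thm-3planes} for the three planes, and transfer down via Result~\ref{lem-propermap}; your monotonicity step with $G=\Phi_t^{-1}(\ba{\ball(0;r)})$ actually supplies a detail the paper elides, namely that local polynomial convexity upstairs is stated for balls while $\Phi_t^{-1}(\ba{\ball(0;r)})$ is not one. One small caveat: your fact (a), $\hull{F(K)}=F(\hull{K})$, is false for general compact $K$ (only $F(\hull{K})\subseteq\hull{F(K)}$ holds; think of $F(z)=z^2$ and $K$ a half-circle), but it is valid for saturated sets $K=F^{-1}(F(K))$, which is the only case you use, so no harm is done --- alternatively you could cite Result~\ref{lem-propermap} directly at that point. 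Part (ii) is where you genuinely diverge. The paper also goes upstairs via $\Phi_1$: the three planes there are no longer pairwise transverse (they meet along real lines, which is why the Weinstock matrices $A_j^t$ degenerate at $t=1$), and the paper applies a linear change of variables followed by Kallin's lemma with the first coordinate as separating polynomial. You instead stay downstairs, shear $S_1$ to the flat graph $w=\tfrac13(z+\ba z)^3$, and run a slicing argument: push a Jensen measure forward by $w$, use Hartogs--Rosenthal on the real base to collapse it to a point mass, and conclude from the polynomial convexity of the real-line fibres. Both arguments are sound; the paper's buys uniformity with the rest of Section~\ref{sec-proofs-pcvx} (everything is Kallin plus proper maps), while yours is more self-contained for this special value of $t$, exploits the fact that $S_1$ is biholomorphic to a graph of a real-valued function of $\Re z$ alone, and sidesteps the degeneration of the plane configuration at $t=1$ entirely.
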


\noindent A couple of new phenomena occur here which were not present in case of Bishop's surfaces. Part $(ii)$ 
of the next theorem describes the surfaces $S_t$ whose local hull contains a nonempty open ball centred at the origin and Part $(iii)$ determines those surfaces which has analytic discs with boundary in $S_t$ and passes through the CR singularity at the origin.
\begin{theorem}\label{thm-cubic-hull} 
Let $S_t=\{(z,w)\in\cplx^2:w=p_t(z,\ba{z})\}$, where $p_t(z,\ba{z})=z^2\ba{z}+tz\ba{z}^2+t^2\ba{z}^2/3$. 
Then 
\begin{itemize}
\item[i)] For $t\in(0,1)$, $S_t$ is not locally polynomially convex at the origin.
\item[ii)] For every $\delta>0$, the polynomial hull of $S_t\cap \ba{\mathbb{B}(0;\delta)}$ contains a nonempty open ball centred at the origin if $t\in (0,\sqrt{3}/2)$.
\item[ii)] For every $\dl>0$, the polynomial hull of $S_t\cap \ba{\mathbb{B}(0;\delta)}$ contains a one parameter family of analytic discs with boundary in $S_t$ passing through the origin if $t\in[\sqrt{3}/2, 1)$ 
\end{itemize}
\end{theorem}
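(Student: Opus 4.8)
The plan is to transport the conclusions of Theorem~\ref{thm-planes-hull}, proved for the union of three totally-real planes $X_t:=P_0^t\cup P_1^t\cup P_2^t$ of \eqref{eq-planes-graph}, over to the surface $S_t$ through the proper holomorphic map $\Phi_t$ of \eqref{eq-propmap}, using the identity $\Phi_t^{-1}(S_t)=X_t$ coming from Lemma~\ref{lem-planes} and the normal-form reduction. First I would record the three elementary facts about $\Phi_t(z,w)=(z,p_t(z,w))$ that drive everything. (a) For $t\in(0,\infty)$ the map $\Phi_t$ is proper, since the top-order term of $p_t$ in $w$ has nonzero coefficient $t^2/3$; hence $\Phi_t$ is surjective and, being a finite holomorphic map between equidimensional manifolds, open. (b) $\Phi_t^{-1}(0)=\{0\}$ and $\Phi_t(X_t)=\Phi_t(\Phi_t^{-1}(S_t))=S_t$. (c) For any polynomial self-map and any compact $K$ one has $\Phi_t(\hull{K})\subseteq\hull{\Phi_t(K)}$, because $|q(\Phi_t(\zeta))|=|(q\circ\Phi_t)(\zeta)|\leq\sup_K|q\circ\Phi_t|=\sup_{\Phi_t(K)}|q|$ for every polynomial $q$ and every $\zeta\in\hull K$.

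For part (i), fix $\delta>0$ and choose $r>0$ with $\Phi_t(\ba{B(0;r)})\subseteq\ba{B(0;\delta)}$, which is possible by continuity and $\Phi_t(0)=0$. Set $K:=X_t\cap\ba{B(0;r)}$, so that by fact (c) and (b), $\Phi_t(\hull{K})\subseteq\hull{\Phi_t(K)}\subseteq\hull{S_t\cap\ba{B(0;\delta)}}$. By Theorem~\ref{thm-planes-hull}, for $t<1$ the hull $\hull{K}$ contains analytic structure through the origin (a neighbourhood of $0$ when $t<\sqrt3/2$, a family of analytic discs through $0$ when $\sqrt3/2\leq t<1$), and such structure cannot lie inside the totally-real union $X_t$, which contains no nonconstant analytic disc. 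Hence $\hull{K}$ has a point $\zeta\notin X_t$, and since $\Phi_t^{-1}(S_t)=X_t$ its image satisfies $\Phi_t(\zeta)\notin S_t$ while $\Phi_t(\zeta)\in\hull{S_t\cap\ba{B(0;\delta)}}$. Thus the hull of $S_t\cap\ba{B(0;\delta)}$ is strictly larger than $S_t\cap\ba{B(0;\delta)}$, proving $S_t$ is not locally polynomially convex at the origin.

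For part (ii), with $t\in(0,\sqrt3/2)$ Theorem~\ref{thm-planes-hull}(i) gives an open neighbourhood $U$ of $0$ inside $\hull{K}$; then $\Phi_t(U)\subseteq\hull{S_t\cap\ba{B(0;\delta)}}$ by the display above, and since $\Phi_t$ is open with $\Phi_t(0)=0$, the set $\Phi_t(U)$ is an open neighbourhood of the origin and hence contains a ball centred at $0$. For part (iii), with $t\in[\sqrt3/2,1)$ the construction behind Theorem~\ref{thm-planes-hull}(ii) furnishes a one-parameter family of analytic discs $\psi:\ba{\disc}\to\cplx^2$, holomorphic on $\disc$, with $\psi(\ba{\disc})\subseteq\hull{K}$, $\psi(\bdy\disc)\subseteq X_t$, and $0\in\psi(\disc)$. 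Composing with $\Phi_t$, each $\Phi_t\circ\psi$ is holomorphic on $\disc$ and continuous on $\ba{\disc}$, satisfies $(\Phi_t\circ\psi)(\bdy\disc)\subseteq\Phi_t(X_t)=S_t$, passes through $\Phi_t(0)=0$, and is nonconstant because $\Phi_t$ has discrete fibres; and its image lies in $\Phi_t(\hull{K})\subseteq\hull{S_t\cap\ba{B(0;\delta)}}$ by fact (c). This produces the required one-parameter family of analytic discs with boundary in $S_t$.

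The genuinely delicate point is the openness of $\Phi_t$ at the origin invoked in part (ii): the origin is a critical point of $\Phi_t$, since $\bdy p_t/\bdy w=(z+tw)^2$ vanishes there, so the inverse function theorem is unavailable and one must instead use that a proper (equivalently finite) holomorphic map between equidimensional complex manifolds is open even along its branch locus. A secondary care point is ensuring in part (iii) that the pushed-forward discs neither collapse to points nor acquire boundary outside $S_t$; both are controlled by the finiteness of the fibres of $\Phi_t$ together with $\Phi_t^{-1}(0)=\{0\}$. Everything else is the mechanical bookkeeping of matching the source radius $r$ to the target radius $\delta$, which the properness of $\Phi_t$ makes routine.
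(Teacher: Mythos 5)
Your transport machinery is fine as far as it goes --- the inclusion $\Phi_t(\hull{K})\subseteq\hull{\Phi_t(K)}$, the openness of the finite proper map $\Phi_t$ at the critical point $0$, and the composition of discs with $\Phi_t$ are all correct observations --- but the argument is circular relative to the logical structure of the paper, and it contains no proof of the analytic facts that constitute the actual content of the theorem. In the paper, Theorem~\ref{thm-planes-hull} is \emph{deduced from} Theorem~\ref{thm-cubic-hull} (the introduction states this explicitly, and Section~\ref{sec-surface-hull} proves Theorem~\ref{thm-cubic-hull} first and then obtains Theorem~\ref{thm-planes-hull} by pulling back under $\Phi_t$, using the reverse inclusion $\hull{\Phi_t(K)}\subseteq\Phi_t(\hull{K})$ that properness provides). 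You are running the implication in the opposite direction and treating Theorem~\ref{thm-planes-hull} as an independently available input, for which neither you nor the paper supplies a direct proof. Unless you construct the analytic structure in the hull of $P_0^t\cup P_1^t\cup P_2^t$ from scratch, nothing has been proved.

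What is actually needed, and what the paper does, is entirely downstairs on the surface $S_t$ itself: (1) compute the Maslov-type index $Ind_{S_t}(0)=2$ for $t<1$ via Result~\ref{res-maslov}, using $q_t(z)=\partial p_t/\partial\ba{z}(z,1)=(z+t)^2$, whose double root $-t$ lies in $\disc$; (2) verify that $\rl\left(p_t(z,\ba{z})/z\right)$ is subharmonic on $\cplx\setminus\{0\}$ by the computation $\partial^2\phi_t/\partial z\partial\ba{z}=1-t^2\rl(z^2/\ba{z}^2)\geq 1-t^2>0$; (3) invoke Wiegerinck's theorems (Results~\ref{res-wiegerinck1}--\ref{res-wiegnonhomo}) to get parts $(i)$--$(iii)$. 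The dichotomy at $t=\sqrt{3}/2$ between parts $(ii)$ and $(iii)$ comes from checking Wiegerinck's Property ${\sf (**)}$ for the curve $\mathscr{C}_t(z)=p_t(z,\ba{z})/z^3=\ba{z}^2+t\ba{z}^4+\tfrac{t^2}{3}\ba{z}^6$ on $\bdy\disc$; this is Lemma~\ref{lem-preimage}, which shows that $g_t(z)=z^2+tz^4+\tfrac{t^2}{3}z^6$ is two-to-one on preimages of boundary points precisely when $t<\sqrt{3}/2$, and acquires extra unit-modulus preimages when $t\geq\sqrt{3}/2$ so that ${\sf (**)}$ fails. None of this appears in your proposal, and it cannot be recovered from the planes statement without first proving that statement by some independent route.
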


We now consider the surfaces $M_t\subset \cplx^2$ with an isolated
CR singularity at the origin of order three, i.e. there exist 
$r>0$ such that 
\begin{equation}\label{eq-gensurf}
M_t\cap \ba{B(0;r)}=\{(z,w)\in\cplx^2: \phi(z,w)=0\},
\end{equation}
where $\phi(z,w)= p_t(z,\ba{z})+F(z,\ba{z})-w$ with $F(z)=o(|z|^3)$.
Next, we state a couple of theorems for $M_t$ analogous to Theorems~\ref{thm-cubic-pcvx} and \ref{thm-cubic-hull}.

\begin{theorem}\label{thm-surface-pcvx}
For $t\in (0,\infty)$,  let $M_t$ be as in \eqref{eq-gensurf}. Then $M_t$ is locally polynomially convex 
at the origin if
	 $t\geq \sqrt{\dfrac{3}{2}}$.
\end{theorem}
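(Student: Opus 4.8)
The plan is to deduce the statement from the unperturbed three-planes result (Theorem~\ref{thm-3planes}) by pulling $M_t$ back through the proper polynomial map $\Phi_t$ of \eqref{eq-propmap} and then invoking our perturbation-stability theorem for unions of pairwise transverse totally-real submanifolds. Recall that $\Phi_t(z,w)=(z,p_t(z,w))$ with $p_t(z,w)=\dfrac{t^2}{3}w^3+tw^2z+wz^2$, so near the origin the preimage $\Phi_t^{-1}(M_t)$ is the solution set of
\[
p_t(z,w)=p_t(z,\ba z)+F(z,\ba z),
\]
which is exactly the defining equation $p_t(z,w)=p_t(z,\ba z)$ of $\Phi_t^{-1}(S_t)=P_0^t\cup P_1^t\cup P_2^t$ perturbed by the higher-order term $F=o(|z|^3)$. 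The first task is to show that, for $t$ in the stated range (so that the three planes are pairwise transverse), this perturbed set decomposes near the origin into three totally-real $\smoo^1$-submanifolds, each a graph $w=W_j(z,\ba z)$ tangent at the origin to the corresponding plane $P_j^t$.

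The analytic input is the implicit function theorem applied to the above cubic (in $w$) equation. For $z\neq 0$ the three roots of $p_t(z,w)-p_t(z,\ba z)=0$ are the distinct linear functions $L_j(z,\ba z)$ cutting out the planes, separated by a gap comparable to $|z|$, while $\partial_w p_t$ is comparable to $|z|^2$ there; adding $-F=o(|z|^3)$ therefore shifts each root by $o(|z|^3)/|z|^2=o(|z|)$, so each branch has the form $W_j=L_j+g_j$ with $g_j=o(|z|)$ and $Dg_j=o(1)$ at the origin. Thus $\tilde N_j:=\{w=W_j(z,\ba z)\}$ is tangent to $P_j^t$ at the origin, and under the anisotropic rescaling $(z,w)\mapsto(rz,rw)$, which fixes the linear planes, the rescaled perturbation $\tfrac1r g_j(rz,\overline{rz})$ becomes arbitrarily $\smoo^1$-small on the unit ball as $r\to0$. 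Being $\smoo^1$-close to transverse totally-real planes, the $\tilde N_j$ are themselves pairwise transverse and totally real near the origin.

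With this in hand I would combine Theorem~\ref{thm-3planes}, which gives local polynomial convexity of $P_0^t\cup P_1^t\cup P_2^t$ at the origin for $t>\dfrac{\sqrt{15-\sqrt{33}}}{2\sqrt2}$, with the perturbation-stability theorem: a sufficiently small $\smoo^1$-perturbation of a locally polynomially convex union of pairwise transverse totally-real submanifolds stays locally polynomially convex at the common point. Applied to the rescaled configuration this yields that $\Phi_t^{-1}(M_t)=\tilde N_0\cup\tilde N_1\cup\tilde N_2$ is locally polynomially convex at the origin. To transfer this to $M_t$, fix $\rho$ with $(\Phi_t^{-1}(M_t))\cap\ba{B(0;\rho)}$ polynomially convex and choose $\rho'>0$ so small that $\Phi_t^{-1}(\ba{B(0;\rho')})\subset\ba{B(0;\rho)}$, which is possible since $\Phi_t$ is proper with $\Phi_t^{-1}(0)=\{0\}$. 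Writing $X:=M_t\cap\ba{B(0;\rho')}$, the set $\Phi_t^{-1}(X)=\bigl[(\Phi_t^{-1}(M_t))\cap\ba{B(0;\rho)}\bigr]\cap\Phi_t^{-1}(\ba{B(0;\rho')})$ is an intersection of two polynomially convex sets (the second because $\Phi_t$ is proper and balls are polynomially convex), hence polynomially convex. Using the standard identity $\hull{\Phi_t^{-1}(X)}=\Phi_t^{-1}(\hull{X})$ for the proper polynomial map $\Phi_t$ (as in Thomas \cite{T1}) together with surjectivity of $\Phi_t$, we get $\Phi_t^{-1}(\hull{X})=\Phi_t^{-1}(X)$ and therefore $\hull{X}=X$, i.e.\ $M_t$ is locally polynomially convex at the origin.

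The main obstacle I anticipate is the first technical step: showing that $\Phi_t^{-1}(M_t)$ genuinely splits near the origin into three $\smoo^1$ totally-real graphs that are $\smoo^1$-small perturbations of the planes. The difficulty is the degeneration of $\Phi_t$ at the origin, where $D\Phi_t$ drops rank and the three roots collide, so the implicit function theorem is non-uniform there; the rescaling argument is exactly what converts the higher-order vanishing of $F$ into genuine $\smoo^1$-smallness on a fixed ball. Care is needed to secure the derivative estimate $Dg_j=o(1)$, which requires $F=o(|z|^3)$ to hold in a $\smoo^1$ (rather than merely $\smoo^0$) sense so that the gradient of the perturbation also vanishes to the appropriate order.
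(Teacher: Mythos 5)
Your proposal follows essentially the same route as the paper: pull $M_t$ back by the proper map $\Phi_t$, show that near the origin the preimage splits into three pairwise transverse totally-real $\smoo^1$-graphs tangent to $P_0^t,P_1^t,P_2^t$, invoke Theorem~\ref{thm-3planes} together with the $\smoo^1$-perturbation theorem (Theorem~\ref{thm-polyperturb} via Corollary~\ref{coro-totrealperturb}), and descend through Result~\ref{lem-propermap}. The only real divergence is in the splitting step: the paper uses the identity $p_t(z,w)=\bigl((tw+z)^3-z^3\bigr)/(3t)$ to solve the branch equation in closed form, writing $\bigl(tf(\zeta)+(\zeta+t\ba{\zeta})\bigr)^3=(\zeta+t\ba{\zeta})^3+3tF(\zeta)$ and extracting the three branches as the three cube roots, whereas you run a quantitative root-separation/implicit-function argument (using $|\partial_w p_t|=|tw+z|^2\gtrsim|z|^2$ at the unperturbed roots) plus a rescaling; both yield $o(|\zeta|)$ perturbations, both require $F=o(|z|^3)$ in the $\smoo^1$ sense for the derivative estimate (a point you rightly flag and the paper leaves implicit), and your version has the mild advantage of not depending on the perfect-cube structure of $p_t$.
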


\begin{theorem}\label{thm-surface-hull}
For $t\in (0,\infty)$,  let $M_t$ be as in \eqref{eq-gensurf}. Then
\begin{itemize}
\item[i)] $M_t$ is not locally polynomially convex if $t\in (0,1)$. 
	 \item[ii)] For every $\dl>0$, the polynomially convex hull of $M_t\cap \ba{\mathbb{B}(0;\dl)}$ contains a nonampty open ball centred at the origin if $t\in (0, \sqrt{3}/2)$.
	 \item[iii)] For every $\dl>0$, the polynomial hull of $M_t\cap \ba{\mathbb{B}(0;\delta)}$ contain 
	 a one parameter family of analytic discs with boundary in $M_t$ and passing through the origin if $\sqrt{3}/2\leq t<1$ .
\end{itemize} 
\end{theorem}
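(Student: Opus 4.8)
The plan is to deduce all three parts from the homogeneous case, Theorem~\ref{thm-cubic-hull}, by a perturbation argument carried out after pulling back through the proper holomorphic map $\Phi_t(z,w)=(z,p_t(z,w))$ of \eqref{eq-propmap}. Fix $\delta>0$ and set $K:=M_t\cap\ba{\ball(0;\delta)}$; since $\Phi_t$ is proper, $L:=\Phi_t^{-1}(K)$ is compact, and clearly $\Phi_t(L)\subseteq K$. Because $q\circ\Phi_t$ is a polynomial whenever $q$ is, one has $\Phi_t(\hull L)\subseteq\hull{\Phi_t(L)}\subseteq\hull K$; moreover $\Phi_t$, being holomorphic, carries analytic discs to analytic discs (or to points) and, being a proper polynomial map of $\cplx^2$ to itself, is open. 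Consequently it suffices to produce the asserted analytic structure inside $\hull L$: a one-parameter family of discs through the origin will push forward to such a family in $\hull K$ (parts (i) and (iii)), and an open ball in $\hull L$ will push forward, by openness of $\Phi_t$, to a set containing a ball in $\hull K$ (part (ii)).

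Next I would identify $L$ near the origin. The set $\Phi_t^{-1}(M_t)$ is the zero locus of $p_t(z,w)-p_t(z,\ba z)-F(z,\ba z)$. Because $a_2^2=3a_1a_3$ for the coefficients of $p_t$, the homogeneous part $p_t(z,w)-p_t(z,\ba z)$ splits into three factors, each linear in $w$, vanishing respectively on the planes $P_0^t,P_1^t,P_2^t$ of \eqref{eq-planes-graph}; these are pairwise transverse and meet only at the origin. Since $F(z,\ba z)=o(|z|^3)$, near a point of one plane away from the origin the other two factors are bounded away from zero, so there $\Phi_t^{-1}(M_t)$ is a single $\smoo^1$ graph over that plane. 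Thus, near the origin, $\Phi_t^{-1}(M_t)$ is a union of three $\smoo^1$-small perturbations of the totally-real planes $P_j^t$, still meeting at the origin, and the theorem reduces to the statement that the conclusions of Theorem~\ref{thm-planes-hull} persist under a sufficiently small $\smoo^1$-perturbation of $P_0^t\cup P_1^t\cup P_2^t$.

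To establish this persistence I would run a Bishop-type construction on the perturbed configuration. Parametrize candidate analytic discs with boundary on the perturbed planes by their boundary data and impose the attachment condition; via the Hilbert transform this becomes a Bishop equation whose linearization at the model is exactly the operator governing Theorem~\ref{thm-planes-hull}, which is invertible for $t<1$ thanks to transversality and the value of $t$. Since the perturbation enters the equation $\smoo^1$-smoothly and to higher order, the implicit function theorem in a suitable space of disc boundaries (say $\smoo^{1,\alpha}(\bdy\disc)$) yields, for each small radius, a solution close to the corresponding model disc and depending continuously on the parameters. For $\sqrt{3}/2\le t<1$ this produces the one-parameter family of discs through the origin, giving (iii) and a fortiori (i); for $0<t<\sqrt{3}/2$ the model family already sweeps out a solid region (the three-planes phenomenon underlying Theorem~\ref{thm-cubic-hull}(ii)), and its persistent perturbation continues to fill an open set, which together with the openness of $\Phi_t$ yields the ball in (ii).

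The main obstacle is the behaviour at the origin, precisely where the three sheets meet and the surface ceases to be totally real, so that the Bishop operator degenerates. The implicit function theorem controls discs whose boundaries shrink to the origin, but one must keep the family uniform as the discs contract and, for part (ii), prove that the swept region remains genuinely four-dimensional after perturbation rather than collapsing onto the three-dimensional disc family of part (iii). This is where the threshold $t=\sqrt{3}/2$ enters, through a Maslov/index-type nondegeneracy (cf.\ Subsection~\ref{ss-maslov}); verifying that this nondegeneracy, and hence the dichotomy between a ball and a one-parameter family of discs, survives the $o(|z|^3)$ perturbation is the delicate step.
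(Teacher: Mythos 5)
Your overall architecture runs in the opposite direction from the paper's, and the step that would actually produce the analytic discs is missing. The paper does not pull $M_t$ back through $\Phi_t$ at all for this theorem: it works directly on the graph $M_t$ and feeds it into Wiegerinck's theorems (Results~\ref{res-wiegerinck1} and \ref{res-wiegnonhomo}), which are tailor-made for graphs $w=p_k(z,\ba z)+o(|z|^k)$. The only two inputs needed are (a) the Maslov-type index $Ind_{M_t}(0)=Ind_{S_t}(0)=2$ for $t<1$, which is stable under the $o(|z|^3)$ perturbation by Result~\ref{lem-maslovwinding}, and (b) strict subharmonicity of $\rl\bigl(p_t(z,\ba z)/z\bigr)$ on $\cplx\setminus\{0\}$ (its Laplacian is $1-t^2\rl(z^2/\ba z^2)\geq 1-t^2>0$), which survives the perturbation. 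Wiegerinck's results then give the disc for (i), the ball for (ii) via Property ${\sf(**)}$ and Lemma~\ref{lem-preimage}, and the one-parameter family through the origin for (iii) (combined with Result~\ref{res-andisc} to rule out discs enclosing the singularity). Note also that in the paper Theorem~\ref{thm-planes-hull} is itself a \emph{corollary} of Theorem~\ref{thm-cubic-hull} obtained by pulling hulls back through $\Phi_t$; there is no independent three-planes disc construction available to ``persist'' under perturbation, so your reduction rests on an engine that does not exist in the paper.

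The concrete gap is therefore your third paragraph. You propose a Bishop equation for discs attached to the union of three perturbed totally-real planes, assert that its linearization ``is exactly the operator governing Theorem~\ref{thm-planes-hull}'' and is invertible, and invoke the implicit function theorem. None of this is set up: the boundary condition is a discontinuous, three-sheeted Riemann--Hilbert problem, no function space or linearized operator is identified, and no invertibility is checked. Worse, you yourself observe that the operator degenerates at the origin, where the three sheets meet and total reality fails --- but for $\sqrt{3}/2\leq t<1$ the discs you must produce have boundaries \emph{passing through} the origin, so the degeneration is not a technical nuisance at the edge of the construction; it sits exactly on the objects you need. Similarly, for part (ii) you defer to an unverified ``Maslov/index-type nondegeneracy'' the claim that the swept region stays four-dimensional after perturbation, and the stability of the $t=\sqrt{3}/2$ dichotomy under the $o(|z|^3)$ term is precisely what Result~\ref{res-wiegnonhomo} (via Property ${\sf(**)}$ and Lemma~\ref{lem-preimage}) is needed for. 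As written, the proposal reduces the theorem to harder unproved statements rather than to established ones.
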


\begin{remark}We make a couple of remarks here.
\begin{itemize}
\item [(i)] The number $\sqrt{3}/2$ has a crucial geometric meaning. For this number onward the boundary of any 
analytic disc attached to $M_t$ have to pass through the CR singularity, the origin. It seems a hyperbolic sector is appearing in the picture. In another viewpoint, both the matrices corresponding to Weinstock's normal 
form (see Lemma~\ref{lem-normformplane}) of the planes have nonreal eigenvalues if and only 
if $0<t<\sqrt{3}/2$.

\item[(ii)] The numbers $\sqrt{\dfrac{3}{2}}$ and $\dfrac{\sqrt{15-\sqrt{33}}}{2\sqrt{2}}$ seem to have no special importance. It is just that our technique does not work. We expect that $M_t$ will be locally polynomially convex at the origin for $t>1$.
\end{itemize}
\end{remark}
\smallskip

We now turn our discussion towards local polynomial convexity and the local hull of the unions of three totally-real surfaces. 
In view of Condition~$(*)$, the unions of three totally-real surfaces have a very close connection with the surfaces $M_t$. One of the main approaches of showing local polynomial convexity of surfaces at the isolated CR singularity comes from looking at the unions totally-real surfaces those arise as pre-images under a proper holomorphic maps, and showing that the unions are locally polynomially convex at the origin. Forstneri\v{c}-Stout \cite{FS} 
used this to prove their theorem. We will also use this in our proofs of Theorem~\ref{thm-cubic-pcvx} and Theorem~\ref{thm-surface-pcvx}. 
 If the totally-real surfaces are pairwise transverse, then locally the union of totally-real surfaces can be seen as a union of small perturbations of their tangent spaces at the origin.
The study of the local polynomial convexity of the union of two totally-real subspaces began with Weinstock's \cite{Wk} (see Result~\ref{thm-weinstock}) necessary and sufficient condition.  The unions of three totally-real planes in $\cplx^2$ was first considered by Thomas \cite{T1, T2} showing that
 there exist triples of totally-real planes such that the local hull of the union contains a ball centred at the origin 
and, also, by demonstrating examples where there are no nontrivial hull. 
Sufficient conditions for local polynomial convexity of the unions of three totally-real planes in $\cplx^2$, 
in terms of the pair of matrices in Weinstock's normal form (see Lemma~\ref{lem-normformplane}), is given in \cite{SG2} (see Result~\ref{res-3planes}).
In this paper, we provide some new results in this setting (see Theorem~\ref{thm-3planesspl}, Theorem~\ref{thm-planesdetlessone}, Theorem~\ref{thm-planeszw}) that will also be used in the proof of Theorem~\ref{thm-cubic-pcvx}. 
It is not trivial to pass the local polynomial convexity to the union of totally-real submanifolds intersecting at the 
origin from the union of their tangent spaces at the origin. 
The union of two totally-real submanifolds in $\cplx^n$ intersecting transversely at the origin is locally polynomially convex if the union of their tangent spaces is \cite{SG1, ShafSukh}. These results use Weinstock's result 
(Result~\ref{thm-weinstock}) crucially. In our case Kallin's lemma will allow us to pass local polynomial convexity from the union of tangent spaces to the union of totally-real surfaces for $t\geq\sqrt{\dfrac{3}{2}}$ as in \cite{SG1}. This passage does not seem to work for $\dfrac{\sqrt{15-\sqrt{33}}}{2\sqrt{2}}<t\leq \sqrt{\dfrac{3}{2}}$.


Few comments about our proofs of the theorems. 
\begin{itemize}
\item In the base of our approach towards local polynomial convexity of $M_t$, there lies the union of
 three pairwise transverse totally-real planes. Therefore, we first prove Theorem~\ref{thm-3planes} with the help 
of Result~\ref{res-3planes} and theorems that are stated and proved in Section~\ref{sec-unionplanes}. 
The first part of Theorem~\ref{thm-cubic-pcvx} follow immediately from Theorem~\ref{thm-3planes}. For $S_1$, the planes which we get are not pairwise transverse. We deal with that separately.
 
\item To prove Theorem~\ref{thm-surface-pcvx}, we use Kallin's lemma to enable 
us to prove local polynomial convexity of union of three totally real surfaces which can be seen as 
small perturbation of the planes in Theorem~\ref{thm-3planes}. 

\item We use some results of Wiegerink (Results~\ref{res-wiegerinck1}, \ref{res-wiegerinck} and \ref{res-wiegnonhomo}) to prove Theorems~\ref{thm-cubic-hull} and \ref{thm-surface-hull}. 
Theorem~\ref{thm-planes-hull} follows from Theorem~\ref{thm-cubic-hull}.
\end{itemize}
\smallskip

About the layout of the paper: In Section~\ref{sec-techpreli}, we collect some results from literature
 and prove a lemma about uniqueness of proper holomorphic map, up to invertible $\cplx$-linear 
 transformation. We also provide a 
discussion about Maslov-type index here.  
We state and prove three general results about local polynomial convexity of union of three totally-real 
planes in $\cplx^2$ in Section~\ref{sec-unionplanes}.
 In Section~\ref{sec-polyperturb}, we state and prove that the local polynomial convexity of certain unions of three totally real surfaces. Proofs of Theorem~\ref{thm-cubic-pcvx} and Theorem~\ref{thm-surface-pcvx} and Theorem~\ref{thm-3planes}
are discussed in Section~\ref{sec-proofs-pcvx}. We demonstrate the proofs of  Theorem~\ref{thm-cubic-hull}, Theorem~\ref{thm-surface-hull} 
and Theorem~\ref{thm-planes-hull} in Section~\ref{sec-surface-hull}. Some open questions are mentioned in
Section~\ref{sec-questions}.

\section{Preliminaries}\label{sec-techpreli}

The most part of this section is devoted to collect the results from the literature that will be used 
in our proofs. We begin the section by showing that the proper map in Condition $(*)$ is unique up to 
invertible $\cplx$-linear transformations. 
\begin{lemma}\label{lem-planesunique}
Let $S=\{(z,p(z,\ba{z}))\in\cplx^2: z\in\cplx \}$, where 
$p(z,\ba{z})=\sum_{j=1}^3 a_j\ba{z}^jz^{3-j}$ with $a_3\neq 0$ and $a_2^2=3a_1a_3$. 
Let $\Psi:\cplx^2\to\cplx^2$ be 
a proper holomorphic map of the form $\Psi(z,w)=(z,Q(z,w))$, where $Q$ 
is a homogenous polynomial. 
Assume further that $\Psi^{-1}(S)$ can be written as union of 
three pairwise transverse totally-real planes. Then $\Psi=\Phi$ as defined in \eqref{eq-defPhi} up to a $\cplx$-linear transformation. 
\end{lemma}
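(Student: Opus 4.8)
The plan is to prove uniqueness by analyzing the structure forced on $\Psi$ by the hypothesis that $\Psi^{-1}(S)$ is a union of three pairwise transverse totally-real planes, and then matching this against the known form of $\Phi$. Since $\Psi(z,w)=(z,Q(z,w))$ is proper and $Q$ is homogeneous, the fibre structure forces $Q$ to be homogeneous of degree $3$ in $(z,w)$; properness rules out $Q$ being independent of $w$, so $Q(z,w)=b_3w^3+b_2w^2z+b_1wz^2+b_0z^3$ for constants $b_j$, and properness also forces $b_3\neq 0$. The preimage $\Psi^{-1}(S)$ is the set where $Q(z,w)=p(z,\ba z)$, i.e. where $b_3w^3+b_2w^2z+b_1wz^2+b_0z^3 = a_1z^2\ba z+a_2z\ba z^2+a_3\ba z^3$.

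First I would exploit the requirement that this preimage splits as three totally-real planes through the origin. Writing each plane as $w=\alpha_j z+\beta_j\ba z$ and substituting into the defining equation, I would compare the cubic in $(z,\ba z)$ coming from $Q(z,\alpha_j z+\beta_j\ba z)$ with $p(z,\ba z)$. The key observation is that $p$ has no pure $z^3$, $z^2\ba z$-free, or $z\ba z^0$ terms beyond those listed — in particular $p$ contains no $z^3$ term — so the sum over the three planes of the $z^3$-coefficients must vanish, and similarly the other coefficient-matching equations must hold. The homogeneity of $Q$ together with the fact that the three planes are totally real and pairwise transverse should pin down the $\beta_j$ (the antiholomorphic slopes) and the $\alpha_j$ up to the same scaling ambiguity that a $\cplx$-linear change of the $w$-variable produces.

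The cleanest route is to reduce the matching to a statement about a single-variable cubic. Setting $z=1$ (using homogeneity) along each plane, the three planes correspond to three roots of the polynomial identity obtained by equating $Q(1,\cdot)$ with $p$; because $Q$ has degree $3$ in $w$ and $b_3\neq 0$, there are exactly three such planes, and their total-reality and transversality translate into nondegeneracy of these roots. I would then show that the coefficient constraints $a_2^2=3a_1a_3$, $a_3\neq0$ (which by Lemma~\ref{lem-planes} are exactly what makes $\Phi^{-1}(S)$ three totally-real planes) force the $b_j$ to be proportional to the coefficients defining $\Phi$, namely $(b_1,b_2,b_3)=\lambda(a_1,a_2,a_3)$ for some $\lambda\neq0$, with $b_0$ determined (and in fact $b_0=0$, since $p$ has no holomorphic $z^3$ contribution that $Q$ could otherwise supply). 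The residual scalar $\lambda$ and any permissible adjustment of the $z$-slopes is precisely absorbed by post-composing with an invertible $\cplx$-linear map, giving $\Psi=\Phi$ up to $\cplx$-linear transformation.

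The main obstacle I anticipate is the bookkeeping needed to guarantee that the three planes arising from $\Psi$ are forced to be the same three planes (up to linear change) that arise from $\Phi$, rather than a genuinely different transverse triple whose image under some other homogeneous cubic $Q$ still equals $S$. The hypothesis of pairwise transversality is essential here: it eliminates degenerate configurations and ensures the three antiholomorphic slopes $\beta_j$ are distinct, which in turn makes the Vandermonde-type linear system relating the $b_j$ to the $a_j$ invertible. I would treat the transversality condition carefully to rule out the coincidence of slopes, and this nondegeneracy is what ultimately upgrades ``some homogeneous cubic map'' to ``$\Phi$ up to $\cplx$-linear transformation.''
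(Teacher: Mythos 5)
Your overall strategy --- substitute $w=\alpha_j z+\beta_j\ba{z}$ into $Q$ and compare the resulting cubic in $(z,\ba{z})$ with $p$ --- is the same as the paper's, but two statements need repair and the decisive step is only asserted, not carried out. First, it is not the \emph{sum} over the three planes of the $z^3$-coefficients that must vanish: since $\Psi^{-1}(S)$ \emph{equals} the union of the three planes, each plane is separately contained in the zero set of $Q(z,w)-p(z,\ba{z})$, so for each $j$ the identity $Q(z,\alpha_j z+\beta_j\ba{z})=p(z,\ba{z})$ holds coefficientwise; in particular each plane individually yields $Q(1,\alpha_j)=0$. Requiring only the sum to vanish would discard exactly the information you need. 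Second, $b_0=0$ is not forced: precomposing $\Phi$ with $(z,w)\mapsto(z,w+cz)$ gives an admissible $\Psi$ whose $Q$ has $z^3$-coefficient $c(a_3c^2+a_2c+a_1)$, nonzero for generic $c$. So $b_0=0$ holds only after the normalization that your ``up to a $\cplx$-linear map'' clause is meant to absorb, and cannot be used as an input to the coefficient matching.

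The genuine gap is the passage from the coefficient system to $(b_1,b_2,b_3)=\lambda(a_1,a_2,a_3)$: the ``Vandermonde-type system'' is never exhibited, and this is where the proof actually lives. The paper closes it as follows. The $\ba{z}^3$-coefficient gives $b_3\beta_j^3=a_3$ for every $j$, so each $\beta_j$ is a cube root of $a_3/b_3$; the $z\ba{z}^2$-coefficient gives $\beta_j^2(3b_3\alpha_j+b_2)=a_2$, so $\beta_j$ determines $\alpha_j$, and distinctness of the three planes forces all three cube roots to occur. Writing $Q(z,w)=b_3\prod_i(\mu_iz+w)$, the vanishing of the $z^3$-coefficient says $\prod_i(\mu_i+\alpha_j)=0$, and the remaining equations identify the numbers $\mu_i+\alpha_j$ (suitably rescaled by $\beta_j$) with the roots of $r^3-(a_2/a_3)r^2+(a_1/a_3)r$, a cubic with $0$ as a root. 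Hence, after the $\cplx$-linear change normalizing $b_3=a_3$, $\beta_1=1$ and $\alpha_1=0$, the plane $\{w=\ba{z}\}$ is one of the three, which gives $Q(z,\ba{z})=p(z,\ba{z})$ for all $z$; since $\{w=\ba{z}\}$ is maximally totally real and $Q-p$ is a holomorphic polynomial, $Q\equiv p$. You should either reproduce this computation or supply your own explicit solution of the system; as written, the proposal stops exactly where the argument has to begin.
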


\begin{proof}
We first note that the degree of the polynomial $Q$ must be three, and, since $Q$ 
is homogenous, it can be written as: 
\[
Q(z,w)=c(\mu_1z+w)(\mu_2z+w)(\mu_3 z+w).
\]
Then, if we designate $\Psi^{-1}(S)=\cup_{j=1}^3\{(z,w)\in\cplx^2: w=\alpha_jz+\beta_j\ba{z}\}$, 
for $\al_j,\beta_j\in\cplx, \;j=1,2,3$, we must investigate the solutions $(\alpha,\beta)$ of the equation 
\begin{equation}\label{eq-cubicto3planes}
c\prod_{j=1}^3 ((\mu_j+\alpha)z+\beta \ba{z})=\sum_{j=1}^3a_j\ba{z}^jz^{3-j}.
\end{equation}
Comparing the coefficients, we get that $c=a_3$ and 
\begin{align*}
(\mu_1+\alpha)(\mu_2+\alpha)(\mu_3+\alpha)&=0,\\
(\mu_1+\mu_2+\mu_3+3\alpha)&=\dfrac{a_2}{a_3\beta^2},\\
(\mu_1+\alpha)(\mu_2+\al)+(\mu_2+\al)(\mu_3+\al)+(\mu_3+\al)(\mu_1+\al)&=\dfrac{a_1}{a_3\beta},\\
\beta^3 &=1.
\end{align*}
Thus, we have $\beta\in\{1,\omega,\omega^2\}$, where $\omega=e^{2\pi i/3}$. Let us name the pairs of 
coefficients as $(\alpha_j,\beta_j)$, $j=1,2,3$ defining the three planes, where $\beta_1=1$, $\beta_2=\omega$, 
and $\beta_3=\omega^2$. We note that, for fixed 
$k\in\{1,2,3\}$,  $\mu_j+\alpha_k$, $j=1,2,3$, are the roots of the following equation:
\[
\lambda^3-\dfrac{a_2}{a_3} \left(\dfrac{\lambda}{\beta_k}\right)^2+\dfrac{a_1\lambda}{a_3\beta_k}=0.
\]
Consider the equation 
\begin{equation}\label{eq-poly}
r^3-\dfrac{a_2}{a_3}r^2+\dfrac{a_1}{a_3}r=0.
\end{equation}
By previous computations, we see that each of the following triples are the roots of Equation~\eqref{eq-poly}:
$
(\mu_1+\al_1, \mu_2+\al_1, \mu_3+\al_1), (\omega(\mu_1+\al_2), \om(\mu_2+\al_2), \om(\mu_3+\al_2)), 
(\om^2(\mu_1+\al_3),\om^2(\mu_2+\al_3), \om^2(\mu_3+\al_3)).$ 
If we assume that $r_1,r_2$ and $r_3$ are the roots of Equation~\eqref{eq-poly},
then the polynomial $Q$ exists such that $\Psi$ is 
a proper map with the required properties if and only if there exist permutations 
$\pi_1,\pi_2\in S_3$ such that 
\begin{align*}
r_k&=\omega^j r_{\pi_j(k)}+ \alpha_1-\alpha_{j+1},\qquad j=1,2, \quad k=1,2,3.
\end{align*}
We also note that one of the members in each triple 
is zero. Therefore, by looking at the first triple, we obtain that
\[
\{(z,w)\in\cplx^2: w=\ba{z}\} \subset \Psi^{-1}(S).
\]
This implies that 
\[
Q(z,\ba{z})=p(z,\ba{z})\qquad \forall z\in\cplx.
\]
Since $\{(z,w)\in\cplx^2:w=\ba{z}\}$ is maximally totally real, therefore, we conclude that
\[
Q(z,w)=p(z,w)\qquad \forall(z,w)\in\cplx^2.
\]
Choosing any other triple will contribute a $\cplx$-linear transformation to the map $\Phi$.
 \end{proof}

The following result \cite[Theorem~1.6.24]{stout} plays a vital role in this paper. 
\begin{result}\label{lem-propermap}
Let $\Phi:\cplx^n\longrightarrow\cplx^n$ be a proper holomorphic map. A 
compact subset $K$ of $\cplx^n$ is polynomially convex if and only if 
$\Phi^{-1}(K)$ is polynomially convex.
\end{result}

Next, we state a lemma due to Kallin \cite{K} (also see \cite{dP2}), which
will be used repeatedly in this sequel.
It gives a condition under which the union of two polynomially convex sets is polynomially convex.
\begin{result}[Kallin]\label{lem-Kallin}
 Let $K_1$ and $K_2$ be two compact polynomially convex subsets in $\cplx ^n$. Suppose
$L_1$ and $L_2$ are two compact polynomially convex subsets of $\cplx$ with
$L_1 \cap L_2 =\{0\}$. Suppose further that there exists a holomorphic polynomial
$P$ satisfying the following conditions:
 \begin{enumerate}
 \item[$(i)$] $P(K_1) \subset L_1$ and  $P(K_2) \subset L_2$; and
 \item[$(ii)$] $P^{-1}\{0\} \cap (K_1 \cup K_2)$ is polynomially convex.
 \end{enumerate}
 Then $K_1 \cup K_2$ is polynomially convex.
\end{result}
\smallskip

The next couple of lemmas are of linear algebraic flavour. The first one from \cite{SG2} gives
certain normal form for pair of matrices under similarity of matrices with real entries.
\begin{lemma}\label{lem-normal}
Let $A, B\in\rea^{2\times 2}$ such that $A$ has two 
distinct real eigenvalues, say $\lambda_1, \lambda_2$. Also assume that 
$\det[A,B]>0$. Then there exists an invertible matrix $T\in\rea^{2\times 2}$ 
such that 
\[
TAT^{-1}=\begin{pmatrix}
\lambda_1& 0\\
0 & \lambda_2
\end{pmatrix}
\quad\text{and}\quad
TBT^{-1}=\begin{pmatrix}
s_1 & q\\
q  & s_2
\end{pmatrix}
\]
for $\lm_j, s_j, q\in\rea$, $j=1,2.$
\end{lemma}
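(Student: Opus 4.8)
The plan is to reduce to the case where $A$ is already diagonal and then exploit the only residual freedom available—diagonal rescalings—to symmetrize $B$. Since $A$ has two distinct real eigenvalues $\lambda_1,\lambda_2$, it is diagonalizable over $\rea$, so there is an invertible $S\in\rea^{2\times 2}$ with $SAS^{-1}=D:=\mathrm{diag}(\lambda_1,\lambda_2)$. Setting $B':=SBS^{-1}$ and using $[SAS^{-1},SBS^{-1}]=S[A,B]S^{-1}$, the determinant of the commutator is a similarity invariant, so $\det[D,B']=\det[A,B]>0$. Thus it suffices to find a further conjugation that keeps $D$ diagonal (with $\lambda_1,\lambda_2$ in the same positions) while making the conjugate of $B'$ symmetric.

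The heart of the argument is a single explicit computation. Writing $B'=\begin{pmatrix} a & b \\ c & d \end{pmatrix}$, a direct calculation gives
\[
[D,B'] = \begin{pmatrix} 0 & (\lambda_1-\lambda_2)b \\ (\lambda_2-\lambda_1)c & 0 \end{pmatrix},
\qquad
\det[D,B'] = (\lambda_1-\lambda_2)^2\, bc.
\]
Because the eigenvalues are distinct, $(\lambda_1-\lambda_2)^2>0$, so the hypothesis $\det[A,B]>0$ forces $bc>0$; in particular $b$ and $c$ are both nonzero and of the same sign. Note this immediately excludes any degenerate sub-case ($b=0$ or $c=0$), since either would make the commutator determinant vanish.

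It remains to rescale. As $D$ has distinct diagonal entries, the matrices commuting with it—equivalently, the conjugations $T\,(\cdot)\,T^{-1}$ fixing $D$—are exactly the nonsingular diagonal matrices $T=\mathrm{diag}(t_1,t_2)$. Such a $T$ transforms $B'$ into $\begin{pmatrix} a & (t_1/t_2)b \\ (t_2/t_1)c & d \end{pmatrix}$, which is symmetric precisely when $(t_1/t_2)^2=c/b$. This equation admits a real solution exactly because $bc>0$; choosing $t_2=1$ and $t_1=\sqrt{c/b}$ gives equal off-diagonal entries $q=\sqrt{bc}$, together with $s_1=a$ and $s_2=d$. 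The composite $TS$ then simultaneously diagonalizes $A$ in the prescribed form and symmetrizes $B$, as required.

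The only genuine obstacle is the middle step: recognizing that the \emph{sign} of $\det[A,B]$ governs the sign of the product of the off-diagonal entries of $B$ in the eigenbasis of $A$, and that positivity of that product is exactly the condition permitting a real diagonal rescaling to equalize those entries. Everything else is routine linear-algebra bookkeeping, and the positivity hypothesis conveniently rules out the potentially problematic cases in advance.
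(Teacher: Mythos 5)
Your proof is correct. The paper itself offers no proof of this lemma---it is quoted from \cite{SG2}---so there is nothing to compare against; your argument (diagonalize $A$ over $\rea$, observe that $\det[A,B]=(\lambda_1-\lambda_2)^2bc$ is a similarity invariant so the hypothesis forces $bc>0$, then symmetrize $B$ by a diagonal rescaling, which is exactly the residual conjugation freedom preserving $\mathrm{diag}(\lambda_1,\lambda_2)$) is the natural and complete one. The only blemish is cosmetic: when $b,c<0$ the common off-diagonal entry is $-\sqrt{bc}$ rather than $\sqrt{bc}$, but since the lemma only asserts that the two off-diagonal entries are equal to some real $q$, this does not affect the conclusion.
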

\noindent The next result from \cite{Wk} gives a normal form for tuple of totally-real maximal subspaces 
of $\cplx^n$.
\begin{lemma}\label{lem-normformplane}
Let $P_0,\dots,P_N$ be 
maximal totally-real subspaces in $\cplx^n$ such that $P_j\cap P_0=\{0\}$ for all $j=1,\dots, N$. Then the subspaces  can be 
 parametrized by a tuple of $n\times n$ matrices with real entries as follows: 
 \[
P_0=\rea^n,\;\; P_j=(A_j+i\id)\rea^n,\; j=1,\dots, N,
\]
where $A_j\in\rea^{n\times n}$, $j=1,\dots, N.$ 
\end{lemma}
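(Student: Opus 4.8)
The plan is to first normalize $P_0$ to the standard real subspace $\rea^n$ by a single $\cplx$-linear automorphism of $\cplx^n$, and then to read off each $A_j$ from a real-linear coordinate description of $P_j$.

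First I would observe that a real basis of a maximal totally-real subspace is automatically a $\cplx$-basis of $\cplx^n$. Indeed, if $v_1,\dots,v_n$ is an $\rea$-basis of $P_0$ and $\sum_j(a_j+ib_j)v_j=0$ with $a_j,b_j\in\rea$, then $\sum_j a_jv_j=-i\sum_j b_jv_j$ lies in $P_0\cap iP_0=\{0\}$, forcing all $a_j=b_j=0$; hence the $v_j$ are $\cplx$-independent and, being $n$ of them, form a $\cplx$-basis. The $\cplx$-linear map $T$ sending $v_j$ to the standard basis vector $e_j$ is then invertible and satisfies $T(P_0)=\rea^n$. Since $T$ is $\cplx$-linear it preserves intersections and the complex structure, so the $T(P_j)$ are again maximal totally-real subspaces with $T(P_j)\cap T(P_0)=T(P_j\cap P_0)=\{0\}$. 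Replacing $P_j$ by $T(P_j)$, I may assume from now on that $P_0=\rea^n$ and $P_j\cap\rea^n=\{0\}$ for $j=1,\dots,N$.

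Next, fixing $j$, I would use the $\rea$-linear projection $\pi:\cplx^n=\rea^n\oplus i\rea^n\to i\rea^n$ onto the imaginary part. The key point is that $\pi|_{P_j}$ is a bijection: if $\pi(v)=0$ for $v\in P_j$ then $v\in\rea^n$, so $v\in P_j\cap\rea^n=\{0\}$, giving injectivity; and since $\dim_\rea P_j=n=\dim_\rea(i\rea^n)$, injectivity forces surjectivity. Consequently, for each $y\in\rea^n$ there is a unique $v\in P_j$ with imaginary part $y$, and writing $v=x+iy$ the assignment $y\mapsto x$ is $\rea$-linear, hence given by a real matrix $A_j\in\rea^{n\times n}$. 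Thus $v=(A_j+i\id)y$, and as $y$ ranges over $\rea^n$ this yields exactly $P_j=(A_j+i\id)\rea^n$, as claimed.

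The only genuinely delicate step is the bijectivity of $\pi|_{P_j}$, where both hypotheses enter: the transversality $P_j\cap P_0=\{0\}$ gives injectivity, while the maximal total-reality of $P_j$ (equivalently $\dim_\rea P_j=n$) upgrades this to surjectivity via the dimension count. I would note in passing that the full total-reality of $P_j$ is not needed beyond pinning down its real dimension; the resulting $A_j$ automatically inherits the constraint (no eigenvalue equal to $\pm i$) that makes $(A_j+i\id)\rea^n$ totally real, but this converse is not required for the statement.
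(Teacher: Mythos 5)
Your proof is correct. The paper itself gives no proof of this lemma --- it is quoted directly from Weinstock \cite{Wk} as a known normal form --- and your argument (normalize $P_0$ to $\rea^n$ by a $\cplx$-linear automorphism, then invert the imaginary-part projection on each $P_j$ using transversality for injectivity and the dimension count $\dim_\rea P_j=n$ for surjectivity) is exactly the standard one underlying Weinstock's parametrization, so there is nothing to flag.
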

\noindent We call this normal form as Weinstock's normal form.
Weinstock \cite{Wk} proved the following theorem about the union of two transverse totally-real 
subspaces in $\cplx^n$.

\begin{result}[Weinstock] \label{thm-weinstock}
 Suppose $P_1$ and $P_2$ are two totally-real subspaces of $\cplx^n$ of maximal dimension intersecting
only at $0 \in \cplx^n$. Denote the normal form for this pair as:
\[
 P_1 :=  \rea^n \;\text{and}\;
P_2 := (A+i \id)\rea^n,
\]
for some matrix $A$ with real entries. Then the union $P_1 \cup P_2$ is locally polynomially convex at the origin if and only if $A$ has no purely
imaginary eigenvalue of modulus greater than 1.
\end{result}
In this paper we are concerned only with the case $n=2$. We wish to transform the planes in \eqref{eq-planes-graph} into Weinstock's normal form. In view of Lemma~\ref{lem-normformplane}, Weinstock's normal form depends only on whether the intersection with a particular totally real subspace is trivial or not. In our case we consider that particular plane to be $P_0$. To transform into Weinstock's normal form it is enough to consider a couple of totally real planes of the form: 
\begin{align}
P_0&: \qquad w=\ba{z},\notag\\
P_1&: \qquad w=\al z+\beta \ba{z},\label{eq-generalplanesgraph}
\end{align}
where $\beta\neq 0$.
We first apply the following invertible $\cplx$-linear transformation
\[
T:\cplx^2\to\cplx^2\]
defined by 
\[
T(z,w)=(z+w, -i(z-w)).
\]
Clearly, $T(P_0)=\rea^2$ and 
\[
T(P_1)=\{(z+\al z+\beta\ba{z}, -i(z-\al z-\beta\ba{z})): z\in\cplx\}.
\]
Putting $z=x+iy$, we note that 
\[
T(P_1)=(B+iC)(\rea^2),
\]
where $B$ and $C$ are two $2\times 2$ matrices with real entries that we get as follows: putting $\al=\al_1+i\al_2$ 
and $\beta=\beta_1+i\beta_2$
\[
B=\begin{pmatrix}
 1+\al_1+\beta_1 & \beta_2-\al_2\\
 -\al_2-\beta_2 & 1-\al_1+\beta_1.
 \end{pmatrix}\;\;\text{and}\;\;
 C=\begin{pmatrix}
 \al_2+\beta_2 & 1+\al_1-\beta_1\\
 \al_1+\beta_1-1 & \beta_2-\al_2.
 \end{pmatrix}
 \]
 Here we view the matrices as linear transformations. We have $P_0\cap P_1=\{0\}$ implies that the matrix $C$ is invertible. The condition $P_0\cap P_1=\{0\}$ transfers into $|\beta|^2-|\al|^2-2\rl \beta +1 \neq 0$. 
 Hence, we can write 
\[
T(P_1)=(BC^{-1}+i\id)(C(\rea^2)).
\]
Since the matrix $C$ is invertible matrix with real entries, we obtain that
\[
T(P_1)=(BC^{-1}+i\id)(\rea^2).
\]
Therefore, Weinstock's normal form is $P_0 \sim \rea^2$ and $P_1\sim (A+i\id)(\rea^2)$, where 
$A=BC^{-1}$ as before. 
Putting the values of $\al$ and $\beta$ we obtain Weinstock's normal form for the triple of planes $(P_0^t,P_1^t,P_2^t)$ in \eqref{eq-planes-graph} as 
\[
P_0^t=\rea^2\;\text{and}\; P_j^t=(A_j^t+i\id)\rea^2, j=1,2, 
\]
where the matrices $A_1^t$ and $A_2^t$ are:
\begin{equation}\label{eq-planemat}
A_1^t=\begin{pmatrix}
\dfrac{t}{\sqrt{3}(1+t)} & \dfrac{1}{1+t}\\
-\dfrac{1}{1-t} & -\dfrac{t}{\sqrt{3}(1-t)}
\end{pmatrix}
\quad\text{and}\quad
A_2^t=\begin{pmatrix}
-\dfrac{t}{\sqrt{3}(1+t)} & \dfrac{1}{1+t}\\
-\dfrac{1}{1-t} & \dfrac{t}{\sqrt{3}(1-t)}
\end{pmatrix}.
\end{equation}
We will use these matrices in the proofs of our theorems.

Next, we mention a general result from \cite{SG2} about local polynomial convexity of the union of three totally-real planes in 
$\cplx^2$. 
It gives a sufficient condition, in terms of the matrices involved in the Weinstock's normal form, for local polynomial convexity of union of three totally-real planes at $0\in\cplx^2$. We need few notations from \cite{SG2}:
\[
\Omega:=\left\{(A_1,A_2)\in(\rea^{2\times 2})^2: \det[A_1,A_2]\neq 0, \# \sigma(A_1)=2, i\notin\sigma(A_j) \forall j \right\}
\]
\begin{align*}
\Theta(A_1,A_2) &:=\det A_1(\Tr A_2)^2+ \Tr A_1A_2(\Tr A_1A_2- \Tr A_1 \Tr A_2),\\
\Lambda(A_1,A_2) &:= 4\det A_1A_2 - \dfrac{1}{4}(\Tr A_1 \Tr A_2)^2.
\end{align*}

\begin{result}[Gorai]\label{res-3planes}
  Let $P_0,P_1, P_2$ be three totally-real planes in $\CC$ such that
$P_0 \cap P_j=\{0\}$ for $j=1,2$.
Hence, let Weinstock's normal form for $\{P_0,P_1, P_2\}$ be
\[
 P_0 = \RR,\;
 P_j =(A_j+i \id)\RR, \quad j=1,2,
\]
and assume $(A_1,A_2)$ belongs to parameter domain $\OM$.
 Assume further that the pairwise unions of $P_0,P_1,P_2$ are
locally polynomially convex at $0\in \CC$. Given $j\in \{1,2\}$, let $j^{{\sf C}}$ denote
the other element in $\{1,2\}$. Then:
\begin{itemize}
\item[$(i)$]Let $\sm(A_j)\subset \rea,\, j=1,2$. If
\begin{align*}
\text{either} &\; \;
  \det A_j\det[A_1,A_2]>0,\, j=1,2, \\
\text{or}&\;\; \det A_j\det [A_1,A_2]<0\;\; \text{and}\;\; (\det A_j)\Tht(A_j,A_{j^{{\sf C}}})<0 \;\;
\text{for some}\;\; j\in \{1,2\},
\end{align*}
then $P_0\cup P_1\cup P_2$ is locally polynomially convex at $0\in \CC$.

\item[$(ii)$] Suppose $\sm(A_1)\subset \rea$ and $\sm(A_2)\subset \cplx\setminus\rea$. If
\begin{align}
 \text{either}& \; \; \det A_1\det[A_1,A_2]<0 \;\;\; \text{and}\; \; \; (\det A_1)\Tht(A_1,A_2)<0 \notag \\
 \text{or}& \; \; \Tht(A_1,A_2)<\Lambda(A_1,A_2), \notag
\end{align}
then $P_0\cup P_1\cup P_2$ is locally polynomially convex at $0\in \CC$.
\smallskip

\item[$(iii)$] Suppose $\sm(A_j)\subset \cplx\setminus\rea,\, j=1,2$. If
$\Tht(A_j,A_{{ j^{\sf C}}})< \Lambda(A_1,A_2)$ for some $j\in \{1,2\}$,
then $P_0\cup P_1\cup P_2$ is locally polynomially convex at $0\in \CC$.
\end{itemize}
\end{result}

\subsection{Maslov type index}\label{ss-maslov}

Let $M$ be a $\smoo^1$-smooth orientable totally-real submanifold of $\cplx^2$ and 
$\gamma:S^1\to M$ be a curve. In this case, the pull back bundle $\gamma^* TM$ over $\gamma$ is a 
trivial bundle. Let $X_1, X_2$ be global sections of the above pull back bundle over $\gamma$ such that
\begin{equation}\label{eq-tgtsp}
T_{\gamma(\zeta)}M={\sf Span}_\rea\{X_1(\zeta), X_2(\zeta)\} \quad \forall \zeta\in S^1.
\end{equation}

\begin{definition}
Let $\gamma$ be curve in a $\smoo^1$-smooth totally-real submanifold of $\cplx^2$.
The {\em Maslov-type index of $\gamma$} in $M$ is denoted as $Ind_M(\gamma)$ and is defined as the winding number of the map
$h_{(X_1,X_2)}:S^1\to \cplx\setminus \{0\}$ defined by $h_{(X_1, X_2)}(\zeta):=\det(X_1(\zeta), X_2(\zeta))$, where $X_1, X_2$ are 
global sections of the above pull back bundle over $\gamma$ satisfying \eqref{eq-tgtsp}. 
 \end{definition}

\begin{remark}
We note that, for any other choice of a pair of global section $\{Y_1, Y_2\}$ of the pull back bundle $\gamma^* TM$ over $\gamma$, 
there exist a smooth map $A:S^1\to GL(n,\rea)$ such that $Y_j(\zeta)=A(\zeta)X_j(\zeta)$, $j=1,2$. Hence,
the winding number of the map $h_{(X_1,X_2)}$ is equal to the winding number of the map $h_{(Y_1,Y_2)}$.
Therefore, the Maslov-type index of $\gamma$ in $M$ is independent of the choice of the global sections of $\gamma^* TM$.
\end{remark}

\begin{remark}
We note that any two closed curves $\gamma_1$ and $\gamma_2$ which are homologous in $M$ have the same Maslov-type index (See \cite{FF1} for more discussions) 
\end{remark}

\noindent We now proceed to define the Maslov-type index of a surface in $\cplx^2$ with an isolated CR singularity.
\begin{definition}
Let $M$ be an oriented real submanifold of $\cplx^2$ with an isolated CR singularity at $p\in M$ and $U_p$ be 
a contractible neighbourhood of $p$ in M such that $U_p\setminus \{p\}$ is an oriented totally-real submanifold
of $\cplx^2$, where the orientation induced by the orientation of $M$. The {\em Maslov-type index of $p$} is denoted by $Ind_M(p)$ and is defined 
by the Maslov-type index of a simple closed curve $\gamma:S^1\to U_p\setminus\{p\}$ that winds around the point $p$.  
\end{definition}

\begin{remark}
Since $Ind_M(\gamma)$ depends on the homology class of $\gamma$ and the neighbourhood $U_p$ is contractible, we see that definition 
of Maslov-type index of an isolated CR singularity is independent of the curve $\gamma$ chosen.
\end{remark}

\noindent We now mention a lemma from \cite{FF1} which is pertinent to graphs over domains in $\cplx$ in $\cplx^2$.

\begin{result}\cite[Lemma 8]{FF1}\label{lem-maslovwinding}
Let $\Omega$ be a domain in $\cplx$ containing the origin and $f\in\smoo^1(\Omega)$ such that 
$\left(\dfrac{\partial f}{\partial \ba{z}}\right)^{-1}\{0\}=\{0\}$. Suppose that the 
graph $S_f$ has an isolated CR singularity at $0\in\cplx^2$. Let $\gamma:S^1\to\Omega\setminus \{0\}$ 
be a smooth, positively-oriented, simple closed curve that encloses $0\in \cplx^2$. Then 
the Maslov-type index $Ind_{S_f}(0)$ equals to the winding number of the curve $\dfrac{\partial f}{\partial \ba{z}}\circ \gamma$ 
around the origin.
\end{result}
\noindent Next, we mention a lemma due to Bharali \cite{GB3}, which gives an easy way to compute the Maslov-type index of 
a graph of homogeneous polynomial in $z$ and $\ba{z}$ of degree $k$ around an isolated CR singularity at the 
origin.

\begin{result}\label{res-maslov}\cite[Lemma 2.5]{GB3}
Let $p$ be a homogeneous polynomial of degree 
 $k$ in $z$ and $\ba{z}$ such that $\left\{z\in\cplx: \dfrac{\partial p}{\bdy \ba{z}}(z,\ba{z})=0\right\}=\{0\}$. 
 Define a polynomial $q$ in $z$ by $q(z)=\left.\dfrac{\bdy p}{\bdy \ba{z}}(z,\ba{z})\right|_{\ba{z}=1}$. Then the Maslov-type index of 
 the graph $S_p$ of $p$ of the origin,
 \[
 Ind_{S_p}(0)=2\left(\sum \left\{\mu(\zeta):\zeta\in q^{-1}\{0\}\cap \mathbb{D} \right\} \right)-(k-1),
 \]
where $\mu(\zeta)$ denotes the multiplicity of $\zeta$ as a zero of the polynomial $p$.
\end{result}

\noindent The next theorem is due to Forstneri\v{c} \cite{FF1} in $\cplx^2$, which says, in certain cases, existence of one analytic 
disc attached to a totally-real surface $M$ in $\cplx^2$ gives the existence of many analytic discs attached to it.

\begin{result}[Forstneri\v{c}]\label{res-andisc}
Let $\gamma\subset M$ be the boundary of an immersed analyitc disc $F_0:\ba{\disc}\to\cplx^2$ in a 
totally-real surface $M$ of $\cplx^2$ with Maslov-type index $j\geq 1$. Then $F_0$ lies in a $2j-1$ parameter family of analytic discs with boundary in $M$. If $\wtil{M}$ is a sufficiently small $\smoo^2$-perturbation of $M$, 
then there exists such a $2j-1$ parameter family of analytic discs with boundary in $\wtil{M}$. If $j=1$, the union of discs form a Levi flat hypersurface. If $j>1$, the union of the discs contains an open ball. 
\end{result}

\noindent Next, we state a series of theorems due to Wiegerinck \cite{Wieg} about the local polynomial hull of certain graphs. We will use these results in our proofs of describing local hulls.
\begin{result}[Wiegerinck]\label{res-wiegerinck1}
Let $\varphi$ be a $\smoo^k$-smooth, $k\geq 2$, function on a disc in $\cplx$
centred at the origin. Suppose that the graph of $\varphi$, denoted by $S_\varphi$, has an isolated CR singularity 
at the origin of Maslov-type index $j$, $0<j<k$.
If $\rl(\varphi(z,\ba{z})/z^{j-1})$ is strictly subharmonic 
on a punctured neighbourhood of the origin, then there exist analytic discs with 
boundary in $S_\varphi$.
\end{result}

\begin{result}[Wiegerinck]\label{res-wiegerinck}
Let $F(z,\ba{z})$ be a homogeneous function of degree $k$ in $z$ and $\ba{z}$, and 
$\smoo^2$-smooth away from origin in $\cplx$. Suppose that the origin is an isolated 
CR singularity of $S=\{w=F(z,\ba{z})\}$ and the Maslov-type index at $0\in\cplx^2$ 
is $j$, $0<j<k$. Assume that $\rl(F(z,\ba{z})/z^{j-1})$ is a subharmonic but nowhere harmonic function on 
$\cplx\setminus\{0\}$.  
Then
\begin{itemize}
\item[(i)] $S$ is not locally polynomially convex at $0\in\cplx^2$.

\item[(ii)] For evey $r>0$, the polynomial hull of $S\cap\ba{B(0;r)}$ contains a $(2j-1)$-parameter family of analytic discs with boundary in $S$ 
passing around zero if and only if the curve $\mathscr {C}:S^1\to \cplx$ defined by 
\[
\mathscr{C}(z)=\dfrac{F(z,\ba{z})}{z^k}
\]
has the following property:
${\sf (**)}$  If, for two different points $z_1\neq z_2$ on the unit circle, $\mathscr{C}(z_1)=\mathscr{C}(z_2)$, 
then $z_1$ and $z_2$ divide the unit circle into two segments of length at least $\dfrac{\pi}{k-j+1}$. 
Moreover, if the Maslov-type index $j>1$, this family will fill an open neighborhood of the origin in $\cplx^2$.

\item[(iii)] If Property ${\sf (**)}$ is not satisfied by the curve $\mathscr{C}$, then for evey $r>0$, 
the polynomial hull of $S\cap\ba{B(0;r)}$ contains at least a one parameter family 
of analytic discs
with boundary in $S$ passing through the origin.
\end{itemize}
\end{result}

\begin{result}[Wiegerinck]\label{res-wiegnonhomo}
Let $f(z,\ba{z})=p_k(z,\ba{z})+O(|z|^{k+1})$ be a smooth function of 
class $\smoo^k$ on a disc $D(0;r)$, where $p_k$ is a homogeneous polynomial of degree $k$ in $z$ and $\ba{z}$.
 Suppose that the origin is an isolated 
CR singularity of $S=\{(z,w)\in\cplx^2: w=p_k(z,\ba{z})\}$ and that the Maslov-type index at $0\in\cplx^2$ 
is $j$, $0<j<k$. Assume that $\rl(p_k(z,\ba{z})/z^{j-1})$ is subharmonic but nowhere harmonic on 
$\cplx\setminus\{0\}$ and
the curve $\mathscr {C}: S^1 \to\cplx$ defined by
\[
\mathscr{C}(z)=\dfrac{p_k(z,\ba{z})}{z^k}
\]
has the Property ${\sf (**)}$.
Then, for every $r>0$, the polynomial hull of $S\cap\ba{B(0;r)}$ contains a $(2j-1)$-parameter family of analytic discs with boundary in $S$, whose union contains an open ball centred the origin in $\cplx^2$ if $j>1$.
\end{result}


\section{The union of three totally-real planes}\label{sec-unionplanes}

The next few theorems describe a class of triples of totally-real planes in $\cplx^2$ 
whose union is locally polynomially convex at the origin, but the pair of $2\times 2$ 
matrices corresponding to these triples does not fall 
in to the open set described by Result~\ref{res-3planes}. We will use these theorems while 
proving Theorem~\ref{thm-cubic-pcvx}. These theorems might also be of independent interest.
In the statements of the next three theorems, we will assume that the planes are in Weinstock's normal form.

\begin{theorem}\label{thm-3planesspl}
Let $P_0, P_1,P_2$ be three totally-real planes in $\cplx^2$ such that
\[
P_0=\rea^2,\; \text{and}\;
P_j=(A_j+i\id)\rea^2, \; j=1,2.
\]
Assume that the pairwise unions of $P_0, P_1$, and $P_2$ are locally polynomially 
convex at $0\in\cplx^2$.
Assume further that $A_1$ and $A_2$ satisfy the following conditions:
\begin{itemize}
\item[(i)] $\det A_1=0$ and $\det A_2\geq 0$,
\item[(ii)] $\det[A_1,A_2]>0$.
\end{itemize}
Then $P_0\cup P_1\cup P_2$ is locally polynomially convex at the origin.
\end{theorem}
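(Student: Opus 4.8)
The plan is to reduce to Weinstock's normal form and then separate the three planes with Kallin's lemma. First observe that condition (ii) already pins down the type of $A_1$: since $\det A_1=0$, the eigenvalues of $A_1$ are $0$ and $\Tr A_1$, and if $\Tr A_1=0$ (so that $A_1$ is nilpotent, possibly $0$) a direct computation in a real Jordan basis gives $\det[A_1,A_2]\le 0$, contradicting (ii). Hence $\Tr A_1=:\mu\neq0$ and $A_1$ has two distinct real eigenvalues $0,\mu$. I would then apply Lemma~\ref{lem-normal} with $A=A_1$, $B=A_2$ — the hypothesis $\det[A_1,A_2]>0$ being precisely what that lemma requires — to obtain a real invertible $T$, i.e. a $\cplx$-linear automorphism of $\CC$ fixing $P_0=\RR$ (so that $P_j=(A_j+i\id)\RR$ is carried to $(TA_jT^{-1}+i\id)\RR$ and local polynomial convexity is preserved), after which
\[
A_1=\begin{pmatrix}0&0\\0&\mu\end{pmatrix},\qquad A_2=\begin{pmatrix}s_1&q\\q&s_2\end{pmatrix},\qquad \mu\neq0 .
\]
Here $\det[A_1,A_2]=\mu^2q^2$, so $q\neq0$, while $\det A_2=s_1s_2-q^2\ge0$ forces $s_1s_2\ge q^2>0$; in particular $s_1,s_2$ are nonzero of the same sign. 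Since $\sigma(A_1),\sigma(A_2)\subset\rea$, Weinstock's criterion (Result~\ref{thm-weinstock}) holds automatically for the pairs involving $P_0$, consistent with the standing pairwise hypothesis.

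The planes now read $P_0=\RR$, $P_1=\{(ix_1,(\mu+i)x_2):x\in\RR\}$ — the zero eigenvalue making the first coordinate of $P_1$ purely imaginary — and $P_2=(A_2+i\id)\RR$. I would take the Kallin polynomial
\[
Q(z_1,z_2)=z_1^2+\beta z_2^2,\qquad \beta>0 ,
\]
and compute its restrictions. On $P_0$ one has $Q=x_1^2+\beta x_2^2\in[0,\infty)$. On $P_1$, $Q=-x_1^2+\beta(\mu^2-1)x_2^2+2i\beta\mu\, x_2^2$, so $\Im Q$ has the constant sign of $\mu$ and $Q(P_1)\cap\rea=(-\infty,0]$. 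On $P_2$ a short computation gives
\[
\Im Q|_{P_2}=2\bigl(s_1x_1^2+(1+\beta)q\,x_1x_2+\beta s_2x_2^2\bigr),
\]
with discriminant $(1+\beta)^2q^2-4\beta s_1s_2$. Because $s_1s_2\ge q^2>0$, the quadratic $q^2\beta^2+(2q^2-4s_1s_2)\beta+q^2$ has two positive roots and is negative between them, so there is an open interval of $\beta>0$ for which $\Im Q|_{P_2}$ is definite, necessarily of sign $\mathrm{sgn}(s_1)$. For such $\beta$, $Q(P_2)$ lies in a closed half-plane and meets $\rea$ only at the origin.

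With $\beta$ fixed I would invoke Kallin's lemma (Result~\ref{lem-Kallin}), choosing the grouping so that each block maps into a single closed half-plane: if $\mu$ and $s_1$ have the same sign take $K_1=P_0\cap\ba{B(0;r)}$ and $K_2=(P_1\cup P_2)\cap\ba{B(0;r)}$, while if they have opposite signs take $K_1=(P_0\cup P_1)\cap\ba{B(0;r)}$ and $K_2=P_2\cap\ba{B(0;r)}$. In either case the pairwise hypotheses make $K_1,K_2$ polynomially convex, and the computation places $Q(K_1),Q(K_2)$ in opposite closed half-planes whose real traces are the complementary rays $[0,\infty)$ and $(-\infty,0]$ (one trace being just $\{0\}$). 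Setting $L_j=\hull{Q(K_j)}$, the elementary fact that a compact set contained in a closed half-plane has its planar polynomial hull in the same closed half-plane — and hence cannot enclose points of the opposite open ray — yields $L_1\cap L_2=\{0\}$, so condition $(i)$ of Kallin's lemma holds. For condition $(ii)$, the definiteness and sign computations give $Q^{-1}\{0\}\cap(P_0\cup P_1\cup P_2)=\{0\}$, which is trivially polynomially convex. Kallin's lemma then shows $(P_0\cup P_1\cup P_2)\cap\ba{B(0;r)}$ is polynomially convex, which is the assertion.

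The step I expect to be most delicate is the boundary sub-case $\det A_2=0$ allowed by hypothesis (i): there $s_1s_2=q^2$, the discriminant equals $q^2(1-\beta)^2\ge0$ for every $\beta$, and $\Im Q|_{P_2}$ degenerates to a semidefinite form vanishing on a real line $\ell$. Consequently $Q(P_2)$ can meet $\rea$ along an entire ray $Q(\ell)$ rather than only at the origin, and one must check that this ray lands on the harmless side $(-\infty,0]$ (or else adjust $Q$, e.g. by a preliminary rotation or a quadratic adapted to the now rank-one $A_2$) so that the half-plane separation, and with it $L_1\cap L_2=\{0\}$, survives. The only other point requiring care is the hull-in-a-half-plane claim used above, which follows since a bounded complementary component of a set lying in $\{\Im w\ge0\}$ cannot contain a point with $\Im w<0$.
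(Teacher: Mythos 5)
Your proposal is correct and follows essentially the same route as the paper: reduce via Lemma~\ref{lem-normal} to $A_1=\mathrm{diag}(0,\mu)$, $A_2$ symmetric, apply the Kallin polynomial $z_1^2+\beta z_2^2$ (the paper fixes $\beta=1$, which by your own discriminant analysis always lies in the admissible interval since the roots of $q^2\beta^2+(2q^2-4s_1s_2)\beta+q^2$ have product $1$), and separate the pieces by half-planes using Result~\ref{lem-Kallin}; your sign-adapted grouping is a harmless variant of the paper's fixed grouping $K_0$ versus $K_1\cup K_2$. The one step you defer — that in the boundary case $\det A_2=0$ the real ray $Q(\ell)$ coming from the kernel line of the semidefinite form lands in $(-\infty,0]$ — does close without any adjustment of $Q$: on that line one computes $\rl Q=-\left(1+\dfrac{s_1}{s_2}\right)x_1^2\leq 0$, vanishing only at the origin, exactly as in the paper.
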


\begin{theorem}\label{thm-planesdetlessone}
Let $P_0, P_1, P_2$ be totally-real planes such that
\[
P_0 =\rea^2,\;
P_j =(A_j+i\id)\rea^2,\; j=1,2.
\]
Assume that the pairwise unions of $P_0, P_1$, and $P_2$ are locally polynomially 
convex at $0\in\cplx^2$. Further assume that
\begin{itemize}
\item[(i)] $|\det A_j|\leq1$ and $\det A_j<0$ for $j=1,2$,
\item[(ii)] $\det[A_1,A_2]>0$.
\end{itemize}
Then $P_0\cup P_1\cup P_2$ is locally polynomially convex at the origin.
\end{theorem}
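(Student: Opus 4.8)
The plan is to put the triple in a normal form and then apply Kallin's lemma (Result~\ref{lem-Kallin}) with a quadratic form as separating polynomial; the decisive use of the hypothesis $|\det A_j|\le 1$ will be a sharp estimate on the opening angle of the image cones, and the genuine difficulty will be controlling these two cones \emph{jointly}.

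\emph{Reduction.} Since $\det A_j<0$, each $A_j$ has two distinct real eigenvalues of opposite sign; in particular $\sigma(A_j)\subset\rea$ and $i\notin\sigma(A_j)$. As $A_1$ has distinct real eigenvalues $\lambda_1>0>\lambda_2$ and $\det[A_1,A_2]>0$, Lemma~\ref{lem-normal} gives a real invertible $T$ with $TA_1T^{-1}=\mathrm{diag}(\lambda_1,\lambda_2)$ and $TA_2T^{-1}$ symmetric. The $\cplx$-linear automorphism $z\mapsto Tz$ of $\CC$ fixes $\rea^2$ and sends $(A_j+i\id)\rea^2$ to $(TA_jT^{-1}+i\id)\rea^2$; as polynomial convexity is preserved under such a map, I may assume $A_1=\mathrm{diag}(\lambda_1,\lambda_2)$ and $A_2=\begin{pmatrix}s_1&q\\q&s_2\end{pmatrix}$ with $|\lambda_1\lambda_2|\le 1$ and $\det A_2=s_1s_2-q^2\in[-1,0)$. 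A direct computation gives $\Tht(A_1,A_2)=-s_1s_2(\lambda_1-\lambda_2)^2$, so when $s_1s_2<0$ the hypotheses place $(A_1,A_2)$ in the domain $\OM$ and in case (i) of Result~\ref{res-3planes}, and there is nothing to prove. The genuinely new regime is $s_1s_2\ge 0$.

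\emph{The separating polynomial.} I take $Q(z)=z_1^2+z_2^2$ and separate $K_1=\rea^2\cap\ba{\ball(0;r)}$ from $K_2=(P_1\cup P_2)\cap\ba{\ball(0;r)}$. Clearly $Q(P_0)\subset[0,\infty)$. Writing a point of $P_j$ as $(A_j+i\id)v$ and diagonalizing $A_j$ in an orthonormal eigenbasis (under which $Q$ is invariant), with eigenvalues $\mu_j>0>\nu_j$, one finds
\[
Q(P_j)=\left\{(\mu_j+i)^2x^2+(\nu_j+i)^2y^2:(x,y)\in\rea^2\right\},
\]
which is exactly the convex cone generated by $(\mu_j+i)^2$ and $(\nu_j+i)^2$. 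Since $\arg(\mu_j+i)^2=2\arctan(1/\mu_j)$ and $\arg(\nu_j+i)^2=2\pi-2\arctan(1/|\nu_j|)$, the opening of this cone is $2\pi-2\arctan(1/\mu_j)-2\arctan(1/|\nu_j|)$, and the elementary $\arctan$ identity shows this is $\le\pi$ \emph{precisely} when $\mu_j|\nu_j|=|\det A_j|\le 1$. Thus each hypothesis $|\det A_j|\le1$ says exactly that $Q(P_j)$ is a convex cone of opening $\le\pi$ containing the negative real direction and meeting $(0,\infty)$ nowhere; hence $Q(P_j)\cap[0,\infty)=\{0\}$.

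\emph{The main obstacle, and how I would resolve it.} To invoke Kallin I must enclose $Q(K_2)$ in a compact polynomially convex $L_2\subset\cplx$ with $L_2\cap[0,\infty)=\{0\}$, and check that $Q^{-1}\{0\}\cap(K_1\cup K_2)$ is polynomially convex. The second point is routine: $Q^{-1}\{0\}\cap P_0=\{0\}$, and on each $P_j$ the fibre is $\{0\}$ (or, in the boundary case $|\det A_j|=1$, a pair of totally-real lines through $0$), so the intersection is a finite union of totally-real segments meeting at the origin, which is polynomially convex. The real difficulty is the first point: although each cone has opening $\le\pi$ and straddles the negative real axis, \emph{the angular span of their union need not be $\le\pi$} — one checks it lies in a half-plane iff $(\max_j\mu_j)(\max_j|\nu_j|)\le 1$, which is \emph{not} forced by $|\det A_1|,|\det A_2|\le1$ alone. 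This is where the remaining hypotheses enter. I would use $\det[A_1,A_2]>0$, which pins down the mutual orientation of the two cones, together with the assumed local polynomial convexity of $P_1\cup P_2$ (equivalently, via Result~\ref{thm-weinstock}, a bound on the purely imaginary eigenvalues of the matrix associated to that pair) to confine $Q(P_1)\cup Q(P_2)$ to a closed sector of opening $\le\pi$ avoiding $(0,\infty)$; should the fixed form $Q=z_1^2+z_2^2$ fail to achieve this, I would replace it by a weighted form $Q_M(z)=z^{T}Mz$ with $M\succ 0$ chosen adaptively to rotate and balance the two cones into a common half-plane. Taking $L_2$ to be that sector intersected with a disc (convex, hence polynomially convex) and $L_1=[0,R]$, Kallin's lemma then yields polynomial convexity of $(P_0\cup P_1\cup P_2)\cap\ba{\ball(0;r)}$. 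The hard part is exactly this quantitative control of the joint angular span of the two image cones.
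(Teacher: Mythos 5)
Your reduction, your choice of Kallin polynomial $Q(z,w)=z^2+w^2$, and your key computation are exactly the paper's: Lemma~\ref{lem-normal} puts $A_1$ in diagonal and $A_2$ in symmetric form, $Q(P_0)\subset[0,\infty)$, and the hypothesis $|\det A_j|\le 1$ is used precisely to show that each image $Q(P_j)$ meets the real axis only in $(-\infty,0]$ (the paper does this by computing $\rl Q$ on the locus $\imag Q=0$, getting $\tfrac{1}{\lambda_2}(\lambda_1-\lambda_2)(\lambda_1\lambda_2+1)x^2\le 0$ for $P_1$ and $-(\det A_2+1)(x^2+y^2)\le 0$ for $P_2$; your cone-angle formulation is equivalent). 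Your treatment of $Q^{-1}\{0\}\cap K$ in the boundary case $|\det A_j|=1$ also matches the paper.

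The problem is that you then manufacture an obstacle that does not exist and leave the proof unfinished at the decisive step. Kallin's lemma (Result~\ref{lem-Kallin}) does not require $L_2$ to be a convex sector or to have angular span at most $\pi$; it only requires $L_2$ to be a \emph{compact polynomially convex} subset of $\cplx$ with $L_2\cap L_1=\{0\}$. Take $L_2=Q\bigl((P_1\cup P_2)\cap\ba{\ball(0;r)}\bigr)$ itself: it is the union of two compact convex (truncated) cones, each containing the origin. A union of two compact convex sets with nonempty (connected) intersection has connected complement in $\cplx$ and is therefore polynomially convex --- no joint angular control is needed. Since you have already shown each cone avoids $(0,\infty)$, this $L_2$ meets $L_1=[0,R]$ only at $0$, and Kallin's lemma applies at once. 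So the ``hard part'' you isolate --- the quantitative control of the joint angular span, the appeal to Weinstock's eigenvalue bound for the pair $(P_1,P_2)$, and the fallback to an adaptively weighted form $Q_M$ --- is all unnecessary; the hypothesis that $P_1\cup P_2$ is locally polynomially convex is needed only to know that $K_2=(P_1\cup P_2)\cap\ba{\ball(0;r)}$ is itself polynomially convex, as Kallin's lemma demands of the set being separated from $K_1$. As written, your argument stops short of a proof because this last containment is only promised, not established; once the observation above is inserted, it coincides with the paper's proof. (Your remark that the subcase $s_1s_2<0$ already follows from Result~\ref{res-3planes}(i) via $\Theta(A_1,A_2)=-s_1s_2(\lambda_1-\lambda_2)^2$ is correct but not needed.)
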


\begin{theorem}\label{thm-planeszw}
Let $P_0, P_1, P_2$ be three totally-real planes such that 
\[
P_0 =\rea^2,\;
P_j =(A_j+i\id)\rea^2,\; j=1,2.
\]
Assume that the pairwise unions of $P_0, P_1$, and $P_2$ are locally polynomially 
convex at $0\in\cplx^2$. 
Assume further that 
\begin{itemize}
\item[(i)] $\det[A_1,A_2]>0$,
\item[(ii)] $\det A_j<0,\; j=1,2$,
\item[(iii)]$\beta(A_1, A_2)>\min\{\det A_2(\Tr A_1)^2, \det A_1 (\Tr A_2)^2\},$
\end{itemize}
where $\beta(A_1,A_2):=4\det A_1A_2-\Tr(A_1A_2)(\Tr(A_1A_2)-\Tr A_1 \Tr A_2)-\dfrac{1}{4}(\Tr A_1 \Tr A_2)^2$.
Then $P_0\cup P_1\cup P_2$ is locally polynomially convex at the origin.
\end{theorem}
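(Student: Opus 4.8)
I would prove this via Kallin's lemma (Result~\ref{lem-Kallin}), separating $P_0$ from $P_1\cup P_2$ by the image of a single homogeneous quadratic. Write a homogeneous quadratic $Q(\zeta)=\zeta^{\mathsf T}C\zeta$, $\zeta\in\cplx^2$, via a complex symmetric matrix $C=R+iS$ with $R,S\in\rea^{2\times2}$ symmetric. On $P_0=\rea^2$ one has $Q(x)=x^{\mathsf T}Cx$, and on $P_j=(A_j+i\id)\rea^2$ a point is $(A_j+i\id)x$, so $Q=x^{\mathsf T}C_jx$ with $C_j=(A_j^{\mathsf T}+i\id)\,C\,(A_j+i\id)$; expanding the real parts gives $\Re C_0=R$ and
\[
\Re C_j=A_j^{\mathsf T}RA_j-R-(A_j^{\mathsf T}S+SA_j),\qquad j=1,2.
\]
If I can produce $R\succ0$ and a symmetric $S$ with $\Re C_1\prec0$ and $\Re C_2\prec0$, then $Q(P_0\setminus\{0\})$ lies in the open right half-plane and $Q\big((P_1\cup P_2)\setminus\{0\}\big)$ in the open left half-plane. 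Taking $L_1,L_2$ to be the (compact, convex, hence polynomially convex) convex hulls of $Q\big(P_0\cap\ba{\ball(0;r)}\big)$ and $Q\big((P_1\cup P_2)\cap\ba{\ball(0;r)}\big)$, strict definiteness forces each to meet the imaginary axis only at $0$, so $L_1\cap L_2=\{0\}$; likewise $Q^{-1}(0)$ meets $P_0\cup P_1\cup P_2$ only at the origin, so Kallin's condition (ii) is automatic. Since $P_0\cap\ba{\ball(0;r)}$ is polynomially convex and $(P_1\cup P_2)\cap\ba{\ball(0;r)}$ is polynomially convex by the pairwise hypothesis, Kallin's lemma then closes the proof.

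First I would normalize. Because $\det A_1<0$, $A_1$ has two distinct real eigenvalues of opposite sign, and since $\det[A_1,A_2]>0$, Lemma~\ref{lem-normal} gives $T\in\rea^{2\times2}$ with $TA_1T^{-1}=\mathrm{diag}(\lambda_1,\lambda_2)$ and $TA_2T^{-1}$ symmetric. Conjugation $A_j\mapsto TA_jT^{-1}$ is induced by the $\cplx$-linear map $\zeta\mapsto T\zeta$, which fixes $\rea^2$ and preserves Weinstock's normal form, and it leaves $\det A_j$, $\det[A_1,A_2]$, and $\Theta,\Lambda,\beta$ (all built from traces and determinants of $A_1,A_2,A_1A_2$) unchanged. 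Hence I may assume $A_1,A_2$ are in this form, turning $\Re C_1\prec0$, $\Re C_2\prec0$ into explicit scalar Sylvester inequalities in the entries of $R,S$.

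The second preparatory step is the algebraic identity, immediate from the definitions once one notes that $\Tr(A_1A_2)\big(\Tr(A_1A_2)-\Tr A_1\Tr A_2\big)$ is symmetric in $A_1,A_2$,
\[
\beta(A_1,A_2)=\Lambda(A_1,A_2)-\Theta(A_1,A_2)+\det A_1\,(\Tr A_2)^2,
\]
together with its mirror image $\beta(A_1,A_2)=\Lambda(A_1,A_2)-\Theta(A_2,A_1)+\det A_2\,(\Tr A_1)^2$. Consequently condition (iii) is \emph{equivalent} to $\Theta(A_j,A_{j^{\sf C}})<\Lambda(A_1,A_2)$ for some $j\in\{1,2\}$ — exactly the inequality governing case (iii) of Result~\ref{res-3planes}, but here imposed in the regime $\sigma(A_j)\subset\rea$ (forced by $\det A_j<0$), which falls outside Result~\ref{res-3planes}. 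This is the inequality I would aim to convert into feasibility of the two Stein-type inequalities $\Re C_1\prec0$, $\Re C_2\prec0$.

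The heart of the argument, and the step I expect to be the \textbf{main obstacle}, is showing that condition (iii) suffices for the existence of a single pair $(R,S)$ with $R\succ0$ solving both $\Re C_1\prec0$ and $\Re C_2\prec0$. Note that when some $|\lambda_j|>1$ the naive choice $S=0$ (a real quadratic separated by the real axis) already fails at $P_1$: the $(1,1)$-entry $a(\lambda_1^2-1)$ cannot be made negative while $R\succ0$. Thus the imaginary part $S$ must be used to tilt the separating line, and this extra freedom is precisely what the bounded-determinant Theorem~\ref{thm-planesdetlessone} avoids but the present theorem requires. Concretely, the inequalities are affine in $S$, so for fixed $R$ the admissible $S$ form a polyhedron; I would first record nonemptiness of this polyhedron as a joint condition on $A_1^{\mathsf T}RA_1-R$ and $A_2^{\mathsf T}RA_2-R$, then optimize over $R\succ0$. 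Here $\det A_j<0$ controls the signature of $A_j^{\mathsf T}RA_j-R$, $\det[A_1,A_2]>0$ fixes the orientation that co-orients the two negative-definiteness constraints, and the residual obstruction to a common $(R,S)$ should reduce, after normalization, to the single scalar inequality $\Theta(A_j,A_{j^{\sf C}})<\Lambda(A_1,A_2)$ for the $j$ selected by condition (iii). Carrying out this elimination explicitly, and verifying that the selected $j$ is the one making the feasible region nonempty, is the computational core; with it in hand, the Kallin step described in the first paragraph finishes the proof.
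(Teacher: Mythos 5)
Your proposal correctly sets up the Kallin framework (same grouping as the paper: separate $P_0$ from $P_1\cup P_2$, whose polynomial convexity comes from the pairwise hypothesis), correctly invokes Lemma~\ref{lem-normal} to normalize $A_1=\mathrm{diag}(\lambda_1,\lambda_2)$ and $A_2$ symmetric, and your identity $\beta(A_1,A_2)=\Lambda(A_1,A_2)-\Theta(A_1,A_2)+\det A_1(\Tr A_2)^2$ is a correct and genuinely illuminating reformulation of hypothesis (iii). But the proof has a genuine gap exactly where you flag it: the existence of a pair $(R,S)$ with $R\succ0$ and $\Re C_1\prec0$, $\Re C_2\prec0$ is never established. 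You write that the residual obstruction ``should reduce'' to $\Theta(A_j,A_{j^{\sf C}})<\Lambda(A_1,A_2)$ and that carrying out the elimination ``is the computational core'' --- but that elimination is the entire analytic content of the theorem, and without it nothing is proved. Worse, the paper's own argument suggests your ansatz is too rigid: after the same normalization, the paper takes the single polynomial $p(z,w)=zw$ and shows that $p$ maps $K_0=P_0\cap\ba{\ball}$ \emph{into the real axis}, maps $K_1$ into the real line $\{(\lambda_1+\lambda_2)u=(\lambda_1\lambda_2-1)v\}$ through the origin, and that hypothesis (iii) --- which in the normalized coordinates becomes exactly $q^2>\tfrac{1}{4}(s_1+s_2)^2$, i.e.\ definiteness of the quadratic form $\Im p|_{P_2}=qx^2+(s_1+s_2)xy+qy^2$ --- forces $p(K_2)$ to meet $\rea$ only at $0$. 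So in the paper's separation two of the three planes land \emph{on} lines through the origin rather than in open half-planes; only $P_2$ is pushed strictly off the axis. Your scheme instead demands strict definiteness of $\Re Q$ on all three planes simultaneously (six strict Sylvester inequalities in the five free real parameters of $Q$ up to scale), and there is no evidence in your write-up that this stronger system is feasible throughout the range permitted by (i)--(iii); it may well fail precisely in the degenerate configurations the paper handles by allowing $p(K_0)$ and $p(K_1)$ to sit on lines. The fix is to weaken the separation: do not insist on half-planes for the real part, but follow the paper in using $zw$ (or, in your language, the single quadratic $C=\frac12\bigl(\begin{smallmatrix}0&1\\1&0\end{smallmatrix}\bigr)$) and separate via the imaginary part, taking $L_1,L_2$ to be the relevant line segments and the hull of $p(K_1)\cup p(K_2)$, and checking $p^{-1}\{0\}\cap(K_0\cup K_1\cup K_2)$ is a union of real segments through the origin.
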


\begin{proof}[Proof of Theorem~\ref{thm-3planesspl}] 
Under the conditions $i)$ and $ii)$, thanks to Lemma~\ref{lem-normal}, without 
loss of generality, we may take the matrices as:
\[
A_1=\begin{pmatrix}
0& 0\\
0 & \lambda
\end{pmatrix}
\quad\text{and}\quad
A_2=\begin{pmatrix}
s_1 & q\\
q & s_2
\end{pmatrix},
\]
where $\lambda, s_1, s_2, q\in\rea$.
The planes are of the form
\begin{align*}
P_0&=\{(x,y)\in\cplx^2: x,y\in\rea\},\\
P_1&=\{(ix, (\lm+i)y)\in\cplx^2: x,y\in\rea\},\\
P_2&=\{(s_1+i)x+qy, qx+(s_2+i)y): x,y\in\rea\}.
\end{align*}
Consider $K_j=P_j\cap \ba{\mathbb{B}(0;1)}$, $j=0,1,2$. 
We consider the polynomial 
\[
p(z,w)=z^2+w^2.
\]

Clearly,  
\[
p(K_0)\subset\{z\in\cplx: \imag z=0, \rl z\geq 0\}.
\] 
\smallskip

For $(z,w)\in K_1$, 
\[
p(z,w)=p(ix,(\lambda+i)y)=-x^2+(\lambda^2-1)y^2+2i\lambda y^2.
\]
We wish to look at the image of $K_1$ under $p$ more carefully and prove that 
$\hull{p(K_1)}\cap \hull{p(K_0)}=\{0\}$, 
and $p^{-1}\{0\}\cap K_1=\{0\}$. Our argument goes as follows:

\noindent {\sf Case I: $\lambda>0$.}
For $(z,w)\in K_1$, $p(z,w)\in \{z\in\cplx:\imag z\geq 0\}$. If $\imag p(z,w)=0$, then 
the real part $\rl p(z,w)\leq 0$, and $\rl p(z,w)=0\iff (z,w)=0$.

\noindent {\sf  Case II:  $\lambda=0$.}  In this case, $\rl p(z,w)< 0$ for all $(z,w)\in K_1\setminus \{0\}$.

\noindent {\sf Case III: $\lambda<0$.}  For $(z,w)\in K_1$, $p(z,w)\in \{z\in\cplx:\imag z\leq 0\}$, and $\imag p(z,w)=0$ implies 
the real part $\rl p(z,w)< 0$ for all $(z,w)\in K_1\setminus \{0\}$.
Therefore, in each of the above cases, 
\begin{equation}\label{eq-k0k1}
\hull{p(K_1)}\cap \hull{p(K_0)}=\{0\}, \;\;
\text{and}\;\; p^{-1}\{0\}\cap K_1=\{0\}.
\end{equation}
\smallskip

 For $(z,w)\in K_2$, 
\begin{align*}
p(z,w)&=p((s_1+i)x+qy, qx+(s_2+i)y)\\
&= (s_1^2+q^2-1)x^2+(s_2^2+q^2-1)y^2+ 
2(s_1+s_2)qxy+2i(s_1x^2+s_2y^2+2qxy).
\end{align*}
We first assume that $\det A_2>0$. This gives us $s_1s_2>0$. If $s_1>0$, $\imag p(z,w)>0$; 
if $s_1<0$, then  $\imag p(z,w)<0$ 
for all $(z,w)\in K_2\setminus \{0\}$. Therefore, $\imag p(z,w)\neq 0\; \forall (z,w)\in K_2\setminus \{0\}$.
\smallskip

We now assume that $\det A_2=0$, i. e., $q^2=s_1s_2$. Hence, $s_1$ and $s_2$ have the same sign. 
If $s_1>0$, then  
we have,  for $(z,w)\in K_2$,
\[
\imag p(z,w)=2 (\sqrt{s_1}x+\sqrt{s_2}y)^2 \geq 0.
\]
If $s_1<0$, then 
\[
\imag p(z,w)=2 (\sqrt{-s_1}x+\sqrt{-s_2}y)^2 \geq 0\quad\forall (z,w)\in K_2. 
\]
We also have
\[
\imag p(z,w)=0\impl \rl p(z,w) \leq 0.
\]
Therefore, in both the cases, we obtain that 
\begin{equation}\label{eq-k0k2}
\hull{p(K_2)}\cap \hull{p(K_0)}=\{0\}\;\;\text{and}\;\; p^{-1}\{0\}\cap K_2=\{0\}.
\end{equation}
By assumption, $K:=K_1\cup K_2$ is polynomially convex. We  now obtain, from \eqref{eq-k0k1} 
and \eqref{eq-k0k2}, that 
\[
\hull{p(K)}\cap \hull{p(K_0)}=\{0\} \;\;\text{and}\;\; p^{-1}\{0\}\cap (K\cup K_0)=\{0\}.
\]
Hence, by Result~\ref{lem-Kallin}, $K\cup K_0$ is polynomially convex. 
Therefore, $P_0\cup P_1\cup P_2$ is locally polynomially convex at the origin.
\end{proof}

\begin{proof}[Proof of Theorem~\ref{thm-planesdetlessone}]
Thanks to Lemma~\ref{lem-normal}, without loss of generality, we may assume that
\[
A_1=\begin{pmatrix}
\lambda_1& 0\\
0 & \lambda_2
\end{pmatrix}
\quad\text{and}\quad
A_2=\begin{pmatrix}
s_1 & q\\
q  & s_2
\end{pmatrix},
\]
where $\lm_j, s_j, q\in\rea$, $j=1,2.$
The planes become
\begin{align*}
P_0&=\{(x,y)\in\cplx^2: x,y\in\rea\},\\
P_1 &=\left\{ ((\lambda_1+i)x, (\lambda_2+i)y)\in\cplx^2: x,y\in \rea\right\},\\
P_2&=\left\{((s_1+i)x+qy, qx+(s_2+i)y)\in\cplx^2: x,y\in\rea\right\}.
\end{align*}
Let $K_j:=P_j\cap \ba{\ball}$, $j=0,1,2$, and let 
$K:=K_1\cup K_2$. 
Consider the polynomial 
\[
p(z,w)=z^2+w^2.
\]
We have 
$
p(K_0)\subset\{z\in\cplx: z\geq 0\}
$
and $p^{-1}\{0\}\cap K_0=\{0\}$. We now divide the remaining proof into three cases.
\smallskip

{\em {\bf Case I.} $|\det A_j|<1$ for $j=1,2$.}

We now look carefully at the image of $K_1$ under the polynomial $p$. 
\begin{equation}\label{eq-imagek1underp}
p((\lm_1+i)x, (\lm_2+i)y)= (\lm_1^2-1)x^2+(\lm_2^2-1)y^2+2i(\lm_1x^2+\lm_2y^2).
\end{equation}
The imaginary part  
\begin{equation}\label{eq-imaginaryimagek1}
\imag p((\lm_1+i)x, (\lm_2+i)y)=2(\lm_1x^2+\lm_2y^2).
\end{equation}
 Since 
$\det A_1=\lm_1\lm_2<0$, one of $\lm_1$ or $\lm_2$ must be positive and the other is negative.
Assuming $\lm_2< 0$, we see that
\begin{equation}\label{eq-imaginaryp1}
\imag p((\lm_1+i)x, (\lm_2+i)y) =0 \iff y^2 = -\dfrac{\lm_1}{\lm_2} x^2.
\end{equation}
Then the real part of $p((\lm_1+i)x, (\lm_2+i)y)$ becomes, in view of \eqref{eq-imaginaryp1},
\begin{equation}\label{eq-realimagek1}
\rl p((\lm_1+i)x, (\lm_2+i)y) 
=\dfrac{1}{\lm_2}(\lm_1-\lm_2)(\lm_1\lm_2+1)x^2  \leq 0.
\end{equation}
Since $|\det A_1|<1$, we obtain that
$\rl p((\lm_1+i)x, (\lm_2+i)y)=0\iff  (x,y)=0$.
Thus, we obtain that $\hull{p(K_0)}\cap \hull{p(K_1)}=\{0\}$ and $p^{-1}\{0\}\cap K_1=\{0\}$.
\smallskip

For $(z,w)\in K_2$
\begin{equation}\label{eq-imagek2underp}
p(z,w)= (s_1^2+q^2-1)x^2+(s_2^2+q^2-1)y^2 +2q(s_1+s_2)xy +2i(s_1x^2+2qxy+s_2y^2).
\end{equation}
The imaginary part 
\begin{equation}\label{eq-imaginaryimagek2}
\imag p((s_1+i)x+qy, qx+(s_2+i)y)=2(s_1x^2+2qxy+s_2y^2).
\end{equation}
We have 
\begin{equation}\label{eq-imaginaryp2}
\imag p((s_1+i)x+qy, qx+(s_2+i)y)=0 \iff 2qxy= -(s_1x^2+s_2y^2).
\end{equation}
The real part becomes, in view of \eqref{eq-imaginaryp2},
\begin{align}
\rl p((s_1+i)x+qy, qx+(s_2+i)y) 
&= (q^2-s_1s_2-1)(x^2+y^2)\notag\\
&=-(\det A_2 +1)(x^2+y^2).\label{eq-realimagek2}
\end{align}
Since $|\det A_2|<1$, $p(K_0)\cap p(K_2)=\{0\}$.
Therefore, 
\[
\hull{p(K_0)}\cap \hull{p(K)}=\{0\}\;\text{and}\;\; p^{-1}\{0\}\cap (K_0\cup K)=\{0\}. 
\]
Hence, by Result~\ref{lem-Kallin}, $K_0\cup K$ is polynomially convex.
\smallskip

The next two cases fall under the condition $|\det A_j|=1$ for some $j=1,2.$ Since $\det A_j<0$, we only need to consider $\det A_j=-1$ for some $j=1,2$. We now consider the other two cases.
\smallskip

{\em {\bf Case II.} $\det A_j=-1$ for some $j\in\{1,2\}$ and $|\det A_i|<1$ for $i\neq j$.}

We again consider the Kallin's polynomial $p(z,w)=z^2+w^2$. We first look at the image of $K_1$ under the polynomial p. Equations \eqref{eq-imagek1underp}, \eqref{eq-imaginaryimagek1}, \eqref{eq-imaginaryp1}, and \eqref{eq-realimagek1} remain the same. Clearly, $p(K_0)\cap p(K_1)=\{0\}$. If $|\det A_1|<1$ and  \eqref{eq-imaginaryp1} holds, then, by \eqref{eq-realimagek1}, we get that
\[
\rl p((\lm_1+i)x, (\lm_2+i)y)=0\iff  (x,y)=0.
\]
Therefore, 
\begin{equation}\label{eq-pinverse0capk1a}
p^{-1}\{0\}\cap K_1=\{0\}.
\end{equation}
If $\det A_1=-1$ and  \eqref{eq-imaginaryp1} holds, then, by \eqref{eq-realimagek1}, we obtain that 
\[
\rl p((\lm_1+i)x, (\lm_2+i)y)\equiv 0.
\]
This implies that 
\begin{equation}\label{eq-pinverse0capk1b}
p^{-1}\{0\}\cap K_1=\{((\lm_1+i)x,(\lm_2+i)y)\in \cplx^2: y^2=-\dfrac{\lm_1}{\lm_2}x^2\},
\end{equation}
which is the union of two real line segments in $\cplx^2$ intersecting only at the origin.

We now look at the image of $K_2$. Equations \eqref{eq-imagek2underp}, \eqref{eq-imaginaryimagek2}, \eqref{eq-imaginaryp2}, and \eqref{eq-realimagek2} remain the same. Clearly, $p(K_0)\cap p(K_2)=\{0\}$.
If $|\det A_2|<1$ and  \eqref{eq-imaginaryp2} holds, then, by \eqref{eq-realimagek2}, we get that
\[
\rl p((s_1+i)x+qy, qx+(s_2+i)y) =0 \iff (x,y)=0.
\]
Therefore, 
\begin{equation}\label{eq-pinverse0capk2a}
p^{-1}\{0\}\cap K_2=\{0\}.
\end{equation}
If $\det A_2=-1$ and  \eqref{eq-imaginaryp2} holds, then, by \eqref{eq-realimagek2}, we obtain that 
\[
\rl p((s_1+i)x+qy, qx+(s_2+i)y) \equiv 0.
\]
This gives that $p^{-1}\{0\}\cap K_2$ is the union of two real line segments in $\cplx^2$ intersecting only at the origin. 
Therefore, in this case, we have $\hull{p(K_0)}\cap \hull{P(K)}=\{0\}$ and $p^{-1}\{0\}\cap (K_0\cup K)$ is the union of two real line segments in $\cplx^2$ intersecting only at the origin. Hence, $p^{-1}\{0\}\cap (K_0\cup K)$ is polynomially convex.
\smallskip

{\em {\bf Case III.} $\det A_j=-1$ for $j=1,2$.}

We consider the same Kallin's polynomial $p(z,w)=z^2+w^2$. 
We now need to consider carefully $p^{-1}\{0\}\cap K_j$ for $j=1,2$. 
If $\det A_1=-1$, then we see, from the above computations, that, for each $z\in K_1$, the real part $\rl p(z)$ vanishes whenever $\imag p(z)=0$. The similar is true when $\det A_2=-1$.
We also see that, if $\det A_j=-1$, then the set $p^{-1}\{0\}\cap K_j=\{z\in K_j: \imag p(z)=0\}$ is a union of two real line segments in $\cplx^2$ intersecting at 
the origin. Therefore, $p^{-1}\{0\}\cap K$ is, a union of four real line 
segments in $\cplx^2$ intersecting pairwise only at the origin, which is polynomially convex. Hence, by Kallin's lemma, $K_0\cup K$ is polynomially convex.
\end{proof}

\begin{proof}[Proof of Theorem~\ref{thm-planeszw}]
Since $A_1,A_2\in\rea^{2\times 2}$ and $\det A_j<0$ for $j=1,2$, both the matrices have real eigenvalues. 
Assume, without of generality (otherwise we will interchange the matrices), that
\begin{equation}\label{eq-assump}
\beta(A_1,A_2)>\det A_2(TrA_1)^2.
\end{equation}
Since $\det[A_1,A_2]>0$, by applying Lemma~\ref{lem-normal}, we get, up to a 
simultaneous similarity by real matrix, that
\[
A_1=\begin{pmatrix}
\lambda_1 & 0\\
0 & \lambda_2
\end{pmatrix}\;\;\text{and}\;\;
A_2=\begin{pmatrix}
s_1 & q\\
q & s_2
\end{pmatrix}.
\]	
We first view \eqref{eq-assump} in terms of the entries of the matrices $A_1$ and $A_2$. 
\[
A_1A_2=\begin{pmatrix}
      \lm_1s_1 & \lm_1q\\
      \lm_2q & \lm_2s_2.
      \end{pmatrix}
      \]
 Therefore, we compute:
 \begin{align*}
 \beta(A_1,A_2)
 &=4\lm_1\lm_2(s_1s_2-q^2)+(\lm_1s_1+\lm_2s_2)(\lm_1s_2+\lm_2s_1)-\dfrac{1}{4}(\lm_1+\lm_2)^2(s_1+s_2)^2\\
 &=4\lm_1\lm_2(s_1s_2-q^2)-\dfrac{1}{4}(\lm_1-\lm_2)^2(s_1-s_2)^2. 
 \end{align*}
Hence, 
\begin{equation}\label{eq-assump2}
\beta(A_1,A_2)>\det A_2(TrA_1)^2 \iff q^2>\dfrac{(s_1+s_2)^2}{4}.
\end{equation}
We now consider compact sets
\[
 K_j=P_j\cap\ba{\ball},\;j=0,1,2,
\]
and the polynomial 
\[
p(z,w)=zw.
\]
Clearly, 
\begin{equation}\label{eq-imagek0}
p(K_0)\subset\rea.
\end{equation}
 Since
$p((\lm_1+i)x, (\lm_2+i)y)= (\lm_1\lm_2-1)xy+i(\lm_1+\lm_2)xy,$
hence, we obtain that 
\begin{equation}\label{eq-imagek1}
p(K_1)\subset\left\{u+iv\in\cplx: (\lm_1+\lm_2)u=(\lm_1\lm_2-1)v\right\}.
\end{equation}
We also get that
\begin{align*}
p((s_1+i)x+qy, qx+(s_2+i)y)&= ((s_1+i)x+qy)(qx+(s_2+i)y)\\
&=(s_1x+qy)(qx+s_2y)-xy+i(qx^2+(s_1+s_2)xy+qy^2).	
\end{align*}
The imaginary part of $p((s_1+i)x+qy, qx+(s_2+i)y)$ is 
$qx^2+(s_1+s_2)xy+qy^2$, which vanishes, thanks to \eqref{eq-assump2},  if and only if $(x,y)=(0,0)$.
Therefore, we have 
\begin{equation}\label{eq-imagek2}
p(K_2)\subset\{u+iv\in\cplx:\; v\ne 0\}\cup \{0\}.	
\end{equation}
We now divide the remaining part of the proof into two cases.
\smallskip

{\bf Case I.} $\Tr A_1\neq 0$.

In this case, we consider $K:=K_1\cup K_2$.
From \eqref{eq-imagek0},\eqref{eq-imagek1} and \eqref{eq-imagek2}, we obtain that $\hull{p(K_0)}\cap\hull{p(K)}\subset\{0\}$. We also see that  
$p^{-1}\{0\}\cap (K_0\cup K)$ is a union of two real line segments intersecting only at the origin, which is polynomially convex. Hence, by Kallin's lemma (Result~\ref{lem-Kallin}), 
$K_0\cup K=K_0\cup K_1\cup K_2$ is polynomially convex.
\smallskip

{\bf Case II.} $\Tr A_1= 0$.

In this case, we consider $K:=K_0\cup K_1$. Since $\Tr A_1=0$, from Equation~\eqref{eq-imagek1}, we have 
\[
p(K_1)\subset \rea.
\]
Therefore, $p(K)\subset \rea$. In this case also Equation~\eqref{eq-imagek2} holds. Therefore, 
We obtain $\hull{p(K)}\cap \hull{p(K_2)}=\{0\}$. Again, as before, $p^{-1}\{0\}\cap (K\cup K_2)$ is a union of two real line segments in $\cplx^2$ intersecting only at the origin. Hence, $p^{-1}\{0\}\cap (K\cup K_2)$ is polynomially convex. Therefore, by Kallin's lemma (Result~\ref{lem-Kallin}) $K\cup K_2$ is polynomially convex.
\end{proof}

\section{The union of three totally-real surfaces}\label{sec-polyperturb}

The next couple of theorems are the most important theorems of this section. Those will be crucially used in our proof of Theorem~\ref{thm-surface-pcvx}.
\begin{theorem}\label{thm-unionsurfaces}
Let $S_0, S_1$ and $S_2$ be three totally-real surfaces in $\cplx^2$ 
intersecting transversely at the origin such that 
\begin{align*}
T_0S_0 &=\rea^2\\
T_0S_1 &= (A_1+i\id)\rea^2\\
T_0S_2 &= (A_2+i\id)\rea^2,
\end{align*}
where $A_1,A_2\in M_2(\rea)$. Assume that the pairwise unions of $T_0S_0, T_0S_1$ and $T_0S_2$ is locally polynomially convex at the origin.
Assume further that $det[A_1,A_2]>0$. Then  $S_0\cup S_1\cup S_2$ is locally polynomially 
convex at the origin if one of the following holds:
\begin{itemize}
\item[(i)] $\det A_j>0$ for $j=1,2$; 
\item[(ii)] $\det A_j<0$ and $|\det A_j|< 1$
\end{itemize}

\end{theorem}

\begin{proof}
Since $A_1,A_2\in M_2(\rea)$ and $det[A_1,A_2]>0$, we obtain, in view of Lemma~\ref{lem-normal}, that 
there exists 
an invertible matrix $T\in\rea^{2\times 2}$ 
such that 
\[
TA_1T^{-1}=\begin{pmatrix}
\lambda_1& 0\\
0 & \lambda_2
\end{pmatrix}
\quad\text{and}\quad
TA_2T^{-1}=\begin{pmatrix}
s_1 & q\\
q & s_2
\end{pmatrix}
\]
for $\lambda, s_j,t_j\in\rea$, $j=1,2.$
Therefore, without loss of generality, we may assume that 
\[
A_1=\begin{pmatrix}
\lambda_1& 0\\
0 & \lambda_2
\end{pmatrix}
\quad\text{and}\quad
A_2=\begin{pmatrix}
s_1 & q\\
q & s_2
\end{pmatrix}.
\]
Since $S_0$ is a totally-real surface passing through the origin and $T_0S_0=\rea^2\subset\cplx^2$, 
by implicit mapping theorem there exists a $\delta>0$ such that 
\[
S_0\cap B(0;\delta)=\{(x+if_0(x,y), y+ig_0(x,y))\in\cplx^2:\;\text{for small enough}\; x,y\in\rea\},
\] 
where $f_0$ and $g_0$ are real valued functions.
Locally around the origin we can view the totally-real surfaces as follows: 
\begin{align}
S_0(\delta) &:=\{(x+if_0(x,y), y+ig_0(x,y))\in\cplx^2: (x,y)\in\rea^2\cap \ba{B(0;\delta)}\}\\
S_1(\delta) &:=\{((\lambda_1+i)x+f_1(x,y), (\lambda_2+i)y+g_1(x,y))\in\cplx^2: (x,y)\in\rea^2\cap \ba{B(0;\delta)}\}\\
S_2(\delta) &:=\{((s_1+i)x+qy+f_2(x,y), qx+(s_2+i)y+g_2(x,y))\in\cplx^2: (x,y)\in\rea^2\cap \ba{B(0;\delta)}\},
\end{align}
where $f_0, g_0$ are real valued, and $f_j(x,y)\sim o(|(x,y)|)$ and $g_j(x,y)\sim o(|(x,y)|)$ for all $j=0,1,2$. 
\medskip

We first assume $(i)$, i.e., $det A_j>0$ for $j=1,2$. We will use Kallin's lemma to conclude the 
local polynomial convexity of $S_0(\delta)\cup S_1(\delta)\cup S_2(\delta)$ for some small enough $\delta>0$.
Consider the polynomial 
\[
p(z,w)=z^2+w^2.
\]
Clearly, 
\begin{equation}\label{eq-images0}
p(x+if_0(x,y), y+ig_0(x,y)) = x^2+y^2 +o(|(x,y)|^2).
\end{equation}
We also have 
\begin{align}
& p((\lambda_1+i)x+f_1(x,y), (\lambda_2+i)y+g_1(x,y))\\ &= (\lambda_1+i)^2 x^2+(\lambda_2+i)^2 y^2 +o((|(x,y)|^2) \notag\\
&= (\lambda_1^2-1)x^2+(\lambda_2^2-1)y^2+2i(\lambda_1 x^2+\lambda_2 y^2)+o((|(x,y)|^2) \notag.
\end{align}
Since $detA_1>0$, the imaginary part of $p((\lambda_1+i)x+f_1(x,y), (\lambda_2+i)y+g_1(x,y))$ is
$2(\lambda_1 x^2+\lambda_2 y^2)+o((|(x,y)|^2)\geq0$ or $\leq 0$ for $(x,y)\in\rea^2\cap B(0;\delta)$ and equal to zero only when 
$(x,y)=(0,0)$. Therefore there exists a constant $C_1>0$ such that 
\begin{equation}\label{eq-bounds1}  
p(S_1(\delta))\subset \{u+iv\in\cplx: |u|<C_1|v|\}.
\end{equation} 
We also compute: 
\begin{align}
& p((s_1+i)x+qy+f_2(x,y), qx+(s_2+i)y+g_2(x,y)) \notag\\
&= (s_1^2+q^2-1)x^2+(s_2^2+q^2-1)y^2+ 2i(s_1x^2+s_2y^2+2qxy)+o(|(x,y)|^2)\notag.
\end{align}
Since $det A_2>0$, the imaginary part of the above equation 
$2(s_1x^2+s_2y^2+2qxy)+o(|(x,y)|^2)\geq 0$ or $\leq 0$ and equal to zero only when 
$(x,y)=(0,0)$. Therefore, there exists a constant $C_2>0$ such that 
\begin{equation}\label{eq-bounds2}
p(S_2(\delta))\subset \{u+iv\in\cplx: |u|<C_2|v|\}.
\end{equation}
Consider $C:=max\{C_1,C_2\}$, and choose $\eps>0$ such that $\eps<1/C$. 
From Equation~\eqref{eq-images0} we see that the imaginary part of 
$p(x+if_0(x,y), y+ig_0(x,y))$ is $o(|z|^2)$. Therefore, shrinking $\delta$ if necessary, we 
obtain 
\begin{equation}\label{eq-bounds0}
p(S_0(\delta))\subset \{u+iv\in\cplx: |v|<\eps|u|\}.
\end{equation}
From Equations~\eqref{eq-bounds1}, \eqref{eq-bounds2} and \eqref{eq-bounds0} we have 
\[
\hull{p(S_0(\delta)}\cap(p(S_1(\delta ) \cup S_2(\delta) \hull{)}=\{0\}.
\]
We also have from above computations that 
\[
p^{-1}\{0\}\cap (S_0(\delta)\cup S_1(\delta) \cup S_2(\delta))=\{0\}.
\]
Hence, all the conditions of Kallin's lemma holds and we conclude 
$S_0\cup S_1\cup S_2$ is locally polynomially convex at the origin.
\medskip

Finally, we assume $(ii)$, i.e., $\det A_j<0$ and $|\det A_j|<1$. We consider the matrices in the normal form 
\[
A_1=\begin{pmatrix}
\lm_1 & 0\\
0 & \lm_2
\end{pmatrix}
\quad\text{and}\quad
A_2=\begin{pmatrix}
s_{1} & q\\
q & s_{2}
\end{pmatrix}.
\]
Consider the polynomial 
\[
p(z,w)=z^2+w^2.
\]
Let $D_\dl=\rea^2\cap \ba{B(0;\dl)}$ as before.
We have 
\[
p(x, y)=x^2+y^2.
\]
Hence,
$p^{-1}\{0\}\cap S_0(\dl)=\{0\}$. In fact, for given $\eps>0$ there exists a $\dl>0$ such that 
\begin{equation}\label{eq-imageS0}
    p(S_0(\dl))\subset\{w\in\cplx: \rl w\geq 0,\; |\imag w|\leq \eps \rl w\}.
\end{equation}

By using the computations in the proof of Theorem~\ref{thm-planesdetlessone} we get that 
\begin{equation}\label{eq-surfimagek1underp}
p((\lm_1+i)x+f_1(x,y), (\lm_2+i)y+g_1(x,y))= (\lm_1^2-1)x^2+(\lm_2^2-1)y^2+2i(\lm_1x^2+\lm_2y^2)+o(|(x,y)|^2).
\end{equation}
Equation \eqref{eq-surfimagek1underp} gives us that the imaginary part  
\begin{equation}\label{eq-surfimaginaryimagek1}
\imag p((\lm_1+i)x+f_1(x,y), (\lm_2+i)y+f_2(x,y))=2(\lm_1x^2+\lm_2y^2)+o(|(x,y)|^2).
\end{equation}
From \eqref{eq-surfimaginaryimagek1} we see that
$\imag p((\lm_1+i)x+f_1(x,y), (\lm_2+i)y+g_1(x,y)) =0$ in a curve which is a small perturbation of the curve $y^2 = -\dfrac{\lm_1}{\lm_2} x^2$. Then the real part of $p((\lm_1+i)x+f_1(x,y), (\lm_2+i)y+g_1(x,y))$ becomes, shrinking $\dl$ if required,
\begin{equation}\label{eq-surfrealimagek1}
\rl p((\lm_1+i)x+f_1(x,y), (\lm_2+i)y+g_1(x,y)) 
=\dfrac{1}{\lm_2}(\lm_1-\lm_2)(\lm_1\lm_2+1)x^2 +o(|(x,y)|^2)  \leq 0.
\end{equation}
Since $|\det A_1|<1$, we obtain that
$\rl p((\lm_1+i)x+f_1(x,y), (\lm_2+i)y+f_2(x,y))=0\iff  (x,y)=0$.
Hence, there is a conical neighbourhood $V$ of $\{(z,w)\in S_1(\dl): \imag p(z,w) =0\}$ such that  $\rl p(z,w)\leq 0$ for all $(z,w)\in V$.
Thus, from \eqref{eq-imageS0} and \eqref{eq-surfrealimagek1}, we obtain, shrinking $\dl>0$ if necessary, that $\hull{p(S_o(\dl)}\cap \hull{p(S_1(\dl))}=\{0\}$ and $p^{-1}\{0\}\cap S_1(\dl)=\{0\}$.
\smallskip

For $(z,w)\in S_2(\dl)$
\[
p(z,w)= (s_1^2+q^2-1)x^2+(s_2^2+q^2-1)y^2 +2q(s_1+s_2)xy +2i(s_1x^2+2qxy+s_2y^2)+o(|(x,y)|^2).
\]
The imaginary part 
\[
\imag p(z,w)=2(s_1x^2+2qxy+s_2y^2)+o(|(x,y)|^2).
\]
We have the imaginary part vanishes in a curve which is a small perturbation of the curve
\[
2qxy= -(s_1x^2+s_2y^2).
\]
Then, in view of \eqref{eq-imaginaryp2}, the real part becomes
\begin{align}
& \rl p((s_1+i)x+qy+f_2(x,y), qx+(s_2+i)y+g_2(x,y)) \notag\\
&= (q^2-s_1s_2-1)(x^2+y^2)+o(|(x,y)|^2)\notag\\
&=-(\det A_2 +1)(x^2+y^2)+o(|(x,y)|^2)\notag\\
& < 0\;\text{and}\; =0\;\text{if and only if}\; (x,y)=0. \notag
\end{align}
Hence, there is a conical neighbourhood $W$ of $\{(z,w)\in S_2(\dl): \imag p(z,w) =0\}$ such that  $\rl p(z,w)\leq 0$ for all $(z,w)\in V$.

Therefore, shrinking $\dl>0$ further, if required, we obtain that
\[
\hull{p(S_0(\dl)}\cap \hull{p(S_2(\dl))}=\{0\}\;\text{and}\;\; p^{-1}\{0\}\cap (S_0(\dl) \cup S_2(\dl))=\{0\}. 
\]
Hence, by Result~\ref{lem-Kallin}, $S_0(\dl)\cup S_1(\dl)\cup S_2(\dl)$ is polynomially convex.
Therefore, $S_0\cup S_1\cup S_2$ is locally polynomially convex at $0\in\cplx^2$
\end{proof}

\begin{theorem}\label{thm-unionsurfacesdet0}
Let $S_0, S_1$ and $S_2$ be three totally-real surfaces in $\cplx^2$ 
intersecting transversely at the origin such that 
$T_0S_0 =\rea^2, \;
T_0S_1 = (A_1+i\id)\rea^2$ and
$T_0S_2 = (A_2+i\id)\rea^2$. Assume that the 
pairwise unions of $T_0S_0, T_0S_1$ and $T_0S_2$ are locally 
polynomially convex at the orgin.
Assume further that
\begin{itemize}
\item[i)] $\det A_1=0$ and $\det A_2\geq 0$,
\item[ii)] $\det[A_1,A_2]>0$.
\end{itemize}
Then $S_0\cup S_1\cup S_2 $ is locally polynomially convex at the origin.
\end{theorem}

\begin{proof}
Under the conditions $i)$ and $ii)$, by Lemma~\ref{lem-normal}, without 
loss of generality we assume that
\[
A_1=\begin{pmatrix}
0& 0\\
0 & \lambda
\end{pmatrix}
\quad\text{and}\quad
A_2=\begin{pmatrix}
s_{21} & t_2\\
t_2 & s_{22}
\end{pmatrix}.
\]

Locally around the origin we can view the totally-real surfaces as follows: 
\begin{align*}
S_0(\delta) &:=\{(x+if_0(x,y), y+ig_0(x,y))\in\cplx^2: (x,y)\in\rea^2\cap \ba{B(0;\delta)}\}\\
S_1(\delta) &:=\{(ix+f_1(x,y), (\lambda+i)y+g_1(x,y))\in\cplx^2: (x,y)\in\rea^2\cap \ba{B(0;\delta)}\}\\
S_2(\delta) &:= \{((s_{21}+i)x+t_2y+f_2(x,y), t_2x+(s_{22}+i)y+g_2(x,y))\in\cplx^2:
(x,y)\in\rea^2\cap \ba{B(0;\delta)}\}
\end{align*}
where $f_0, g_0$ are real valued, and $f_j(x,y)\sim o(|(x,y)|)$ and $g_j(x,y)\sim o(|(x,y)|)$ for all $j=0,1,2$. 
We now divide our arguments in two cases: 
\smallskip

\noindent {\bf  Case I.} Assume $\det A_1=0$ and $\det A_2=0$.

We consider the polynomial 
\[
p(z,w)=z^2+w^2.
\]
Clearly,  
\begin{equation}\label{eq-imager2det0}
p(x+if_0(x,y), y+ig_0(x,y)) = x^2+y^2 +o(|(x,y)|^2).
\end{equation}
Hence, $\rl{p(x+if_0(x,y), y+ig_0(x,y))}\geq 0$,  and $=0$ if and only if $(x,y)=0$. 
\smallskip

For $(z,w)\in S_1(\delta)$, 
\[
p(z,w)=p(ix+f_1(x,y),(\lambda+i)y+g_1(x,y))=-x^2+(\lambda^2-1)y^2+2i\lambda y^2+o(|(x,y)|^2).
\]
We first note that on the set $S_1(\delta)\cap \{y=0\}$ the real part $\rl p\leq 0$ and equal to zero if and only if $x=0$. 
 Therefore, by continuity, there exists $\eps>0$ such that $\rl p <0$ on 
 $\{(ix+f_1(x,y), (\lambda+i)y+g_1(x,y))\in\cplx^2: (x,y)\in\rea^2\cap \ba{B(0;\delta)}, |y|<\eps|x|\}$. 
 On the other part of $S_1(\delta)$, we get the the imaginary part $\imag p\geq 0$ and $=0$ only when $(x,y)=0$. 
 Hence, there exists a constant $C_1>0$ such that 
 $|\rl p(z)|\leq C_1 \imag p(z)$ 
 for all $z\in\{(ix+f_1(x,y), (\lambda+i)y+g_1(x,y))\in\cplx^2: (x,y)\in\rea^2\cap \ba{B(0;\delta)}, |y|\geq\eps|x|\}$
\smallskip

\noindent For $(z,w)\in K_2$, 
\begin{align*}
p(z,w)&=p((s_{21}+i)x+t_2y +f_2(x,y), t_2x+(s_{22}+i)y+g_2(x,y))\\
&= (s_{21}^2+t_2^2-1)x^2+(s_{22}^2+t_2^2-1)y^2+ 2(s_{21}+s_{22})t_2xy\\
&\qquad\qquad\qquad\qquad\qquad\qquad+2i(s_{21}x^2+s_{22}y^2+2t_2xy) +o(|(x,y)|^2).
\end{align*}
Since $\det A_2=0$,  
we have 
\[
\imag p(z,w)=2 (\sqrt{s_{21}}x+\sqrt{s_{22}}y)^2 +o(|(x,y)|^2).
\]
On the set $S_2(\delta)\cap \{\sqrt{s_{21}}x+\sqrt{s_{22}}y=0\}$ 
the real part 
\[
\rl p= -(1+s_{22}/s_{21})y^2+o(|y|^2)\leq 0,
\]
 and equal to zero if and only if $y=0$.
Again, by continuity of the real part, there exists $\eps_1>0$ such that 
 $\rl p <0$ on 
 $\{((s_{21}+i)x+t_2y+f_2(x,y), t_2x+(s_{22}+i)y+g_2(x,y))\in\cplx^2: (x,y)\in\rea^2\cap \ba{B(0;\delta)}, 
 |x|<(\eps_1-\sqrt{s_{22}}/\sqrt{s_{21}})|y|\}$.
 On the other part of $S_2(\delta)$, we get the the imaginary part $\imag p\geq 0$ and $=0$ only when $(x,y)=0$. 
 Hence, there exists a constant $C_2>0$ such that 
 $|\rl p(z)|\leq C_2 \imag p(z)$ for all 
 $z\in\{((s_{21}+i)x+t_2y+f_2(x,y), t_2x+(s_{22}+i)y+g_2(x,y))\in\cplx^2: (x,y)\in\rea^2\cap \ba{B(0;\delta)}, 
 |x|\geq(\eps_1-\sqrt{s_{22}}/\sqrt{s_{21}})|y|\}$. 

 From \eqref{eq-imager2det0} we get that the 
imaginary part $\imag{p(x+if_0(x,y), y+ig_0(x,y))}\sim o(|(x,y)|^2)$.
 Since  
 \[
 \rl{p(x+if_0(x,y), y+ig_0(x,y))}\geq 0,
 \]
  and $=0$ if and only if $(x,y)=0$, we get a sufficiently small $\eps_2>0$ 
 ( $\eps_2<\min {1/C_1, 1/C_2}$)
 such that 
 \[
 p(S_0(\delta))\subset \{z\in\cplx: |\imag z|\leq \eps_2 |\rl z|\}.
 \]
 Hence, all the conditions of Kallin's lemma are satisfied. Therefore, shrinking $\delta$ if necessary, we conclude 
 that $S_0(\delta)\cup S_1(\delta)\cup S_2(\delta)$ is polynomially convex.
 \smallskip
 
 \noindent {\bf Case II.} Assume $\det A_1=0$ and $\det A_2>0$. 
 \smallskip
 
 We will deal the surface $S_1(\delta)$ as in Case I and for other two surfaces we will use computations 
 done in the proof of Theorem~\ref{thm-unionsurfaces}. Then, by using Kallin's lemma, we conclude 
 that $S_0\cup S_1\cup S_2$ is locally polynomially convex.
 \end{proof}

\section{Proofs of Theorem~\ref{thm-3planes}, Theorem~\ref{thm-cubic-pcvx} and Theorem~\ref{thm-surface-pcvx}}\label{sec-proofs-pcvx}

We begin this section by proving Theorem~\ref{thm-3planes}. This theorem plays a vital role in the proof of the 
other two theorems.
\begin{proof}[Proof of Theorem~\ref{thm-3planes}]
We will work with Weinstock's normal form of the triples of totally-real planes as in \eqref{eq-planemat}
\[
P_0^t=\rea^2,\;
P_j^t=(A_j^t+i\id)\rea^2,\; j=1,2,
\]
where 
\[
A_1^t=\begin{pmatrix}
\dfrac{t}{\sqrt{3}(1+t)} & \dfrac{1}{1+t}\\
-\dfrac{1}{1-t} & -\dfrac{t}{\sqrt{3}(1-t)}
\end{pmatrix}
\quad\text{and}\quad
A_2^t=\begin{pmatrix}
-\dfrac{t}{\sqrt{3}(1+t)} & \dfrac{1}{1+t}\\
-\dfrac{1}{1-t} & \dfrac{t}{\sqrt{3}(1-t)}
\end{pmatrix}.
\]
We have
\begin{align*}
\sigma(A_1^t)&=\left\{\dfrac{-t^2+\sqrt{4t^2-3}}{\sqrt{3}(1-t^2)}, \dfrac{-t^2-\sqrt{4t^2-3}}{\sqrt{3}(1-t^2)}\right\},\\
\sigma(A_2^t)&=\left\{\dfrac{t^2+\sqrt{4t^2-3}}{\sqrt{3}(1-t^2)}, \dfrac{t^2-\sqrt{4t^2-3}}{\sqrt{3}(1-t^2)}\right\}.
\end{align*}
We note that the eigenvalues of $A_1^t$ and $A_2^t$ are not purely imaginary for $t\in (0,\infty)$. Hence, by Theorem~\ref{thm-weinstock}, 
$P_0^t\cup P_1^t$ and 
$P_0^t\cup P_2^t$ are locally polynomially convex at $0\in\cplx^2$. To show $P_1^t\cup P_2^t$ 
is polynomially convex, we need some computations. 
First, we apply an invertible $\cplx$-linear transformation $\cplx^2\longrightarrow\cplx^2$ by $z\mapsto (A_1^t-i\id)z$. It maps 
$P_1^t$ to $\rea^2$, and $P_2^t$ to $(B^t+i\id)\rea^2$, where 
$B^t=(A_1^tA_2^t+\id)(A_1^t-A_2^t)^{-1}$. It suffices to show that $Tr B^t\neq 0$. We compute: 
\begin{align}
A_1^tA_2^t
&= \begin{pmatrix}
-\dfrac{t^2}{3(1+t)^2}-\dfrac{1}{1-t^2} & \dfrac{t}{\sqrt{3}(1+t)^2}+\dfrac{t}{\sqrt{3}(1-t^2)}\\
\dfrac{t}{\sqrt{3}(1-t)^2}+\dfrac{t}{\sqrt{3}(1-t^2)} & -\dfrac{t^2}{3(1-t)^2}-\dfrac{1}{1-t^2}
\end{pmatrix},\label{eq-A1A2}\\
A_2^tA_1^t
&=\begin{pmatrix}
-\dfrac{t^2}{3(1+t)^2}-\dfrac{1}{1-t^2} & -\dfrac{t}{\sqrt{3}(1+t)^2}-\dfrac{t}{\sqrt{3}(1-t^2)}\\
-\dfrac{t}{\sqrt{3}(1-t)^2}-\dfrac{t}{\sqrt{3}(1-t^2)} & -\dfrac{t^2}{3(1-t)^2}-\dfrac{1}{1-t^2}
\end{pmatrix},\label{eq-A2A1}
\end{align}
\begin{align*}
(A_1^t-A_2^t)^{-1} &=\dfrac{\sqrt{3}(1-t^2)}{2t}\begin{pmatrix}
\dfrac{1}{1-t} & 0\\
0 & -\dfrac{1}{1+t}
\end{pmatrix},\\
(A_1^tA_2^t+\id)&=\begin{pmatrix}
1-\dfrac{3+3t+t^2-t^3}{3(1+t)(1-t^2)} & \dfrac{2t}{\sqrt{3}(1+t)(1-t^2)}\\
\dfrac{2t}{\sqrt{3}(1-t)(1-t^2)} & 1-\dfrac{3-3t+t^2+t^3}{3(1-t)(1-t^2)}
\end{pmatrix}.
\end{align*}
Thus, we obtain that
\[
Tr (B^t)=Tr\left((A_1^tA_2^t+\id)(A_1^t-A_2^t)^{-1}\right)= \sqrt{3}\left(1-\dfrac{3-t^2}{3(1-t^2)}\right)\neq 0\;\; \forall t\in (0,\infty).
\]
 Hence, the pairwise union of the totally-real planes $P_0,P_1$ and $P_2$ is polynomially convex.
 From \eqref{eq-A1A2} and \eqref{eq-A2A1}, we obtain that 
\begin{align}
\det A_1^t&=\det A_2^t=\dfrac{3-t^2}{3(1-t^2)} \label{eq-detA}\\
Tr A_1^t&=-\dfrac{2t^2}{\sqrt{3}(1-t^2)}, \quad Tr A_2^t=\dfrac{2t^2}{\sqrt{3}(1-t^2)}\label{eq-trA}\\
Tr A_1^tA_2^t&=-\dfrac{2(t^4-2t^2+3)}{3(1-t^2)^2}\label{eq-trA1A2},\\
\det[A_1^t, A_2^t]&=-\dfrac{16t^2}{3(1-t^2)^3}\label{eq-detcomm}.
\end{align}
We now divide the remaining part of the proof into four cases.
\medskip

\noindent {\bf Case I: $t^2>3$.}
\smallskip

\noindent In this case, we note that
\[
\det[A_1^t,A_2^t]>0
\;\;
\text{and}\;\;
\det A_j^t>0,\;\; j=1,2.
\]
Hence, we have $\det A_j^t\det[A_1^t,A_2^t]>0$ for $j=1,2$. Therefore, in view of Part $(i)$ of 
Theorem~\ref{res-3planes}, we get that $P_0^t\cup P_1^t\cup P_2^t$ is locally polynomially 
convex at the origin.
\smallskip

\noindent {\bf Case II: $t^2=3$}
\smallskip

\noindent We note that
\[
\det[A_1^t,A_2^t]>0\;\;\text{and}\;\;\det A_j^t=0,\quad j=1,2,
\]
Hence, by Theorem~\ref{thm-3planesspl}, we obtain that
$P_0^t\cup P_1^t\cup P_2^t$ is locally 
polynomially convex at the origin.
\smallskip

\noindent{\bf Case III: $\dfrac{3}{2}\leq t^2<3$.}
\smallskip

\noindent For $\sqrt{\dfrac{3}{2}}<t<\sqrt{3}$, we see that 
\[
\det[A_1^t, A_2^t]>0,\;\;\det A_j^t<0 \;\; j=1,2,\;\;\text{and}\;\;
|\det A_j^t|=\left|\dfrac{3-t^2}{3(1-t^2)}\right|\leq 1
\]
Hence, all the conditions of Theorem~\ref{thm-planesdetlessone} are satisfied. 
Therefore, $P_0^t\cup P_1^t\cup P_2^t$ is locally polynomially convex at the origin.
\medskip

\noindent {\bf Case IV: $\dfrac{15-\sqrt{33}}{8}<t^2<2$.}
\smallskip

\noindent In this case, we have 
\[
\det[A_1^t,A_2^t]>0,\;\;\text{and}\;\; \det A^t_j<0,\;j=1,2.
\]
We now show that $\beta(A_1^t,A_2^t)<\det A_2 (Tr A_1)^2$. We compute:
\begin{align*}
TrA_1^tA_2^t-Tr A_1^t Tr A_2^t
&= \dfrac{2(t^4+2t^2-3)}{3(1-t^2)^2},\\
TrA_1^tA_2^t(TrA_1^tA_2^t-Tr A_1^t Tr A_2^t)&= -\dfrac{4(t^8-(2t^2-3)^2)}{9(1-t^2)^4},\\
\beta(A_1^t,A_2^t)
&=\dfrac{4t^2}{9(1-t^2)^4}\left[t^6-8t^4+18t^2-12\right].
\end{align*}	
Thus, we have
\begin{align*}
\det A_2^t (TrA_1^t)^2-\beta (A_1^t,A_2^t)&=	 \dfrac{4t^4(3-t^2)}{9(1-t^2)^3}-\dfrac{4t^2}{9(1-t^2)^4}(t^6-8t^4+18t^2-12)\\
&=\dfrac{4t^2}{9(1-t^2)^4}\left(t^2(3-4t^2+t^4)-t^6+8t^4-18t^2+12\right)\\
&=\dfrac{4t^2}{9(1-t^2)^4}\left(4t^4-15t^2+12\right).
\end{align*}
Since, by assumption, $\dfrac{15-\sqrt{33}}{8}<t^2<2$, we get that
$4t^4-15t^2+12<0$.
This implies $\det A_2^t(TrA_1^t)^2<\beta (A_1^t,A_2^t)$. 
Hence, using Theorem~\ref{thm-planeszw}, we conclude that 
$P_0^t\cup P_1^t\cup P_2^t$ is locally polynomially convex at the origin.
\smallskip

Therefore, combining all the cases, we conclude that $P_0^t\cup P_1^t\cup P_2^t$ is locally 
polynomially convex at the origin for every $t>\dfrac{\sqrt{15-\sqrt{33}}}{2\sqrt{2}}$.
\end{proof}

\begin{proof}[Proof of Theorem~\ref{thm-cubic-pcvx}]

$(i)$ Assume 
$t> \dfrac{\sqrt{15-\sqrt{33}}}{2\sqrt{2}}$. We will show that $S_t$ is 
locally polynomially convex at $0\in\cplx^2$. 
Given 
$S_t=\{(z,p_t(z,\ba{z})):z\in\cplx\},$
 where $p_t(z,\ba{z})=z^2\ba{z}+tz\ba{z}^2+\dfrac{t^2}{3}\ba{z}^3$. We now use 
 the proper holomorphic map $\Phi_t:\cplx^2\longrightarrow\cplx^2$ such that 
 $\Phi_t(z,w)=(z,p_t(z,w))$. In view of Result~\ref{lem-planes}, we obtain 
 \[
 \Phi_t^{-1}(S_t)=P_0^t\cup P_1^t\cup P_2^t.
 \]
 We recall that the 
 pair of matrices \eqref{eq-planemat} corresponding to Weinstock's normal form of the triple $(P_0^t,P_1^t,P_2^t)$ is 
 \[
A_1^t=\begin{pmatrix}
\dfrac{t}{\sqrt{3}(1+t)} & \dfrac{1}{1+t}\\
-\dfrac{1}{1-t} & -\dfrac{t}{\sqrt{3}(1-t)}
\end{pmatrix}
\quad\text{and}\quad
A_2^t=\begin{pmatrix}
-\dfrac{t}{\sqrt{3}(1+t)} & \dfrac{1}{1+t}\\
-\dfrac{1}{1-t} & \dfrac{t}{\sqrt{3}(1-t)}
\end{pmatrix}.
\]
By Theorem~\ref{thm-3planes}, we get that, for each $t>\dfrac{\sqrt{15-\sqrt{33}}}{2\sqrt{2}}$, 
$P_0^t\cup P_1^t\cup P_2^t$ is locally polynomially convex at the origin. 
Therefore, in view of Lemma~\ref{lem-propermap}, for $t>\dfrac{\sqrt{15-\sqrt{33}}}{2\sqrt{2}}$, the surface $S_t$ is locally polynomially 
convex at the origin. 
 \medskip
 
 $(ii)$ We use the proper map $\Phi_1$ so that 
\[
\Phi_1^{-1}(S_1)=P_0\cup P_1\cup P_2,
\]
where the planes are:
\begin{align*}
P_0^1&: \qquad w=\ba{z},\\
P_1^1&: \qquad w=-\dfrac{\sqrt{3}(\sqrt{3}-i)}{2} z-\dfrac{1-i\sqrt{3}}{2}\ba{z},\\
P_2^1&:\qquad w=-\dfrac{\sqrt{3}(\sqrt{3}+i)}{2} z-\dfrac{1+i\sqrt{3}}{2}\ba{z}.
\end{align*}
Since $P_j^1\cap P_k^1=\{(z,w)\in P_j^1: z=-\ba{z}\}$ for $0\leq j<k\leq 2$, by \cite[Lemma~1.3]{SG1}, 
the pairwise unions of $P_0, P_1$ and $P_2$ are locally polynomially convex at the origin. 
We now use the change of variables $\Psi:\cplx^2\longrightarrow\cplx^2$ defined by, 
\[
\Psi(z,w)=(z+w,i(z-w)).
\]
We denote $\wtil{P_j}:=\Psi(P_j)$, $j=0,1,2$, where
\begin{align*}
\wtil{P_0}&=\{(2x,-2y)\in\cplx^2:x,y\in\rea\},\\
\wtil{P_1}&=\{(-(1-i\sqrt{3})x, -2y+(\sqrt{3}+3i)x)\in\cplx^2: x,y\in\rea\},\\
\wtil{P_2}&=\{(-(1+i\sqrt{3})x, -2y-(\sqrt{3}-3i)x)\in\cplx^2:x,y\in\rea\}.
\end{align*} 
Let $K_j=\wtil{P_j}\cap \ba{\mathbb{B}(0;1)}$, $j=0,1,2$, and $K=K_1\cup K_2$. We consider the polynomial 
$F(z,w)=z$. Clearly, we have:
\begin{align*}
F(K_1)&\subset\rea\subset\cplx,\\
F(K_2)&\subset (1-i\sqrt{3})\rea\subset\cplx,\\
F(K_3)&\subset (1+i\sqrt{3})\rea\subset\cplx.
\end{align*}
Thus, we have $\hull{K_0}\cap \hull{K}=\{0\}$. We also obtain that 
$F^{-1}\{0\}\cap K_0=\{(0,-y)\in \cplx^2: y\in\rea\}\cap K_0$, which is polynomially convex, and 
$F^{-1}\{0\}\cap K=\{(0,-y)\in \cplx^2: y\in\rea \}\cap K$. 
Therefore, by Kallin's lemma (Result~\ref{lem-Kallin}), we conclude that 
$K_0\cup K(=K_0\cup K_1\cup K_2)$ is polynomially convex.
\end{proof}
\medskip

\begin{proof}[Proof of Theorem~\ref{thm-surface-pcvx}]  

$(i)$ 
We know that there exist 
$r>0$ such that 
\[
M_t\cap \ba{B(0;r)}=\{(z,w)\in\cplx^2: \phi(z,w)=0\},
\]
where $\phi(z,w)= p_t(z,\ba{z})+F(z,\ba{z})-w$ with $F(z)=o(|z|^3)$.
We first prove that, for $t\in(0,1)\cup (1,\infty)$, the pre-image of $M_t$, near the origin, under the proper holomorphic map $\Phi_t$, 
is a union of three pairwise transverse totally-real surfaces intersecting only at the origin.    
More precisely, we will show, shrinking $r$, if needed, that 
\[
\Phi_t^{-1}\{M_t\cap \ba{B(0;r)}\}=S^t_0\cup S^t_1\cup S^t_2,
\]
 where 
\begin{align*}
S_0^t&=\left\{(\zeta,\ba{\zeta}+f(\zeta))\in\cplx^2: \zeta\in\cplx,\; |\zeta|\leq r\right\},\\
S_1^t&=\left\{\left(\zeta,-\dfrac{3-i\sqrt{3}}{2t}\zeta- \dfrac{1-i\sqrt{3}}{2}\ba{\zeta}+g(\zeta)\right)\in\cplx^2: \zeta\in\cplx,\; |\zeta|\leq r\right\},\\
S_2^t&=\left\{\left(\zeta,-\dfrac{3+i\sqrt{3}}{2t}\zeta- \dfrac{1+i\sqrt{3}}{2}\ba{\zeta}+h(\zeta)\right)\in\cplx^2: \zeta\in\cplx,\; |\zeta|\leq r\right\},
\end{align*}
with $f(\zeta)\sim o(|\zeta|)$, $g(\zeta)\sim o(|\zeta|)$ and $h(\zeta)\sim o(|\zeta|)$ near the origin.
\smallskip

We see that
\[
\Phi_t(S_0^t)\subset M_t \impl 
\phi(\Phi(\zeta,\ba{\zeta}+f(\zeta)))=0.
\]
This implies 
\begin{align}
& \phi(\zt, p_t(\zt,\ba{\zt}+f(\zt))=0 \notag\\
\impl &
t^2(f(\zeta))^3+3t(\zeta+t\ba{\zeta}) (f(\zeta))^2+3(\zeta+t\ba{\zeta})^2 f(\zeta) - 3F(\zeta, \ba{\zt})=0.\label{eq-fcube}
\end{align}
Thus, we obtain that 
\begin{align}
\left(t f(\zeta)+(\zeta+t\ba{\zeta})\right)^3&=3tF(\zeta)+(\zeta+t\ba{\zeta})^3\notag\\
\impl t f(\zeta)+(\zeta+t\ba{\zeta})&
=(\zeta+t\ba{\zeta})\left(1+\dfrac{3tF(\zeta)}{(\zeta+t\ba{\zeta})^3}\right)^{1/3}.\label{eq-fs}
\end{align}
Since our requirement is $f(\zt)\sim o(|\zt|)$ near the origin, we have chosen the cube root of unity as $1$ in 
\eqref{eq-fs}.
Since $F(\zeta)\sim o(|\zeta|^3)$ and $(\zeta+t\ba{\zeta})^3\sim O(|\zeta|^3)$, 
\[
\left|\dfrac{3tF(\zeta)}{(\zeta+t\ba{\zeta})^3}\right|\to 0 \;\; \text{as}\;\; \zeta\to 0.
\]
Hence, in a small neighbourhood of the origin, the function $\left(1+\dfrac{3tF(\zeta)}{(\zeta+t\ba{\zeta})^3}\right)^{1/3}$ well defined and continuous.
Expanding right hand side of \eqref{eq-fs}, we get that $f$ is $\smoo^1$-smooth near the origin and $f(\zeta)\sim o(|\zeta|)$.
\smallskip

Since $\Phi_t(S_1^t)\subset M_t$, we get that 
\[
\phi\left(\Phi\left(\zeta,-\dfrac{3-i\sqrt{3}}{2t}\zeta- \dfrac{1-i\sqrt{3}}{2}\ba{\zeta}+g(\zeta)\right)\right)=0.
\]
This implies
\begin{align}
& t^2(g(\zeta))^3-3te^{2i\pi/3}(\zeta+t\ba{\zeta}) (g(\zeta))^2
+3e^{4i\pi/3}(\zeta+t\ba{\zeta}) g(\zeta)- 3F(\zeta)=0\notag\\
\impl& \left(t g(\zeta)+e^{i\pi/3}(\zeta+t\ba{\zeta})\right)^3
=(\zeta+t\ba{\zeta})^3\left(1+\dfrac{3tF(\zeta)}{(\zeta+t\ba{\zeta})^3}\right).\label{eq-gcube}
\end{align}
Thus, we have 
\[
\left|\dfrac{3tF(\zeta)}{(\zeta+t\ba{\zeta})^3}\right|\to 0 \;\; \text{as}\;\; \zeta\to 0.
\]
We now choose $e^{i\pi/3}$ as the cube root of unity to get a $\smoo^1$-smooth
$g$  in a small neighbourhood of the origin and 
\[
g(\zeta)\sim o(|\zeta|).
\]
\smallskip

  We also have $\Phi_t(S_2^t)\subset M_t$. This implies that
\[
\phi\left(\zeta,-\dfrac{3+i\sqrt{3}}{2t}\zeta- \dfrac{1+i\sqrt{3}}{2}\ba{\zeta}+h(\zeta)\right)=0,
\]
and hence, 
\begin{align}
&t^2(h(\zeta))^3+3te^{4i\pi/3}(\zeta+t\ba{\zeta}) (h(\zeta))^2
+3e^{2i\pi/3}(\zeta+t\ba{\zeta})^2 h(\zeta)- 3F(\zeta)=0.\notag\\
\impl& \left(t h(\zeta)+e^{2i\pi/3}(\zeta+t\ba{\zeta})\right)^3
=(\zeta+t\ba{\zeta})^3\left(1+\dfrac{3tF(\zeta)}{(\zeta+t\ba{\zeta})^3}\right).\label{eq-hcube}
\end{align}
In this case, chooing $e^{2i\pi/3}$ as the cube root of unity, we get from \eqref{eq-hcube} that 
$h$ is $\smoo^1$-smooth in a small neighbourhood of the origin and 
$h(\zeta)\sim o(|\zeta|).$
\smallskip

The tangent spaces of $S_0^t, S_1^t$ and $S_2^t$ at the origin are: 
\begin{align}
T_0S_0^t &=\left\{(\zeta,\ba{\zeta})\in\cplx^2: \zeta\in\cplx \right\},\notag\\
T_0S_1^t&=\left\{\left(\zeta,-\dfrac{3-i\sqrt{3}}{2t}\zeta- \dfrac{1-i\sqrt{3}}{2}\ba{\zeta}\right)\in\cplx^2: \zeta\in\cplx \right\},\notag\\
T_0S_2^t&=\left\{\left(\zeta,-\dfrac{3+i\sqrt{3}}{2t}\zeta- \dfrac{1+i\sqrt{3}}{2}\ba{\zeta}\right)\in\cplx^2: \zeta\in\cplx \right\}.\notag
\end{align}
Using a $\cplx$-linear change of coordinates, in view of \eqref{eq-planemat},
 we get Weinstock's normal form for the tangent spaces 
as 
\[
T_0(S_0^t)=\rea^2,\; T_0S_j^t=(A_j^t+i\id)\rea^2,\; j=1,2,
\]
where
\[
A_1^t=\begin{pmatrix}
\dfrac{t}{\sqrt{3}(1+t)} & \dfrac{1}{1+t}\\
-\dfrac{1}{1-t} & -\dfrac{t}{\sqrt{3}(1-t)}
\end{pmatrix}
\quad\text{and}\quad
A_2^t=\begin{pmatrix}
-\dfrac{t}{\sqrt{3}(1+t)} & \dfrac{1}{1+t}\\
-\dfrac{1}{1-t} & \dfrac{t}{\sqrt{3}(1-t)}
\end{pmatrix}.
\]
Since $t>\dfrac{\sqrt{15-\sqrt{33}}}{2\sqrt{2}}$, by Theorem~\ref{thm-3planes}, any compact subset of the union
$\rea^2\cup (A_1^t+i\id)\rea^2\cup (A_2^t+i\id)\rea^2$ 
is polynomially convex. We now divide the rest of the proof in few cases.
\smallskip

\noindent {\bf Case I: $t^2>3$.}
\smallskip

\noindent In this case, we note that
\[
\det[A_1^t,A_2^t]>0
\;\;
\text{and}\;\;
\det A_j^t>0,\;\; j=1,2.
\]
 Therefore, in view of Part $(i)$ of 
Theorem~\ref{thm-unionsurfaces}, we get that $S_0^t\cup S_1^t\cup S_2^t$ is locally polynomially 
convex at the origin.
\smallskip

\noindent{\bf Case II: $\dfrac{3}{2}< t^2<3$.}
\smallskip

\noindent For $\sqrt{\dfrac{3}{2}}<t<\sqrt{3}$, we see that 
\[
\det[A_1^t, A_2^t]>0,\;\;\det A_j^t<0 \;\; j=1,2,\;\;\text{and}\;\;
|\det A_j^t|=\left|\dfrac{3-t^2}{3(1-t^2)}\right|< 1
\]
Hence, by Part $(ii)$ of Theorem~\ref{thm-unionsurfaces}, we conclude that 
 $S_0^t\cup S_1^t\cup S_2^t$ is locally polynomially convex at the origin.
\smallskip

\noindent {\bf Case III: $t^2=3$}
\smallskip

\noindent We note that
\[
\det[A_1^t,A_2^t]>0\;\;\text{and}\;\;\det A_j^t=0,\quad j=1,2,
\]
Hence, by Theorem~\ref{thm-unionsurfacesdet0}, we obtain that
$S_0^t\cup S_1^t\cup S_2^t$ is locally 
polynomially convex at the origin.

Therefore, combining all the cases, we conclude that $S_0^t\cup S_1^t\cup S_2^t$ is locally 
polynomially convex at the origin for every $t>\sqrt{\dfrac{3}{2}}$.
 Thus, in view of Result~\ref{lem-propermap}, 
we conclude that $M_t$ is locally polynomially convex at $0\in\cplx^2$ if $t>\sqrt{\dfrac{3}{2}}$.
 \end{proof}
 

\section{Proofs of  Theorem~\ref{thm-planes-hull}, Theorem~\ref{thm-cubic-hull} and Theorem~\ref{thm-surface-hull}}\label{sec-surface-hull}

In this section, we first prove Theorem~\ref{thm-cubic-hull} and we apply it to prove Theorem~\ref{thm-planes-hull}. Finally, we give a proof of Theorem~\ref{thm-surface-hull}. We begin with a lemma in one variable complex analysis.
\begin{lemma}\label{lem-preimage}
Let $g_t(z)=z^2+tz^4+\dfrac{t^2}{3}z^6$, $0\leq t<1$. For each point $a\in \bdy \mathbb{D}$, $g_t$ 
has only two pre-image of $g_t(a)$ on $\bdy\mathbb{D}$ if and only if $0\leq t<\sqrt{3}/2$.
\end{lemma}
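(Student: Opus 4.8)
The plan is to reduce the question to a single algebraic condition on $\bdy\disc$ and then exploit the fact that this condition has \emph{real} coefficients. Since the statement concerns pre-images lying on the circle (the setting in which Property ${\sf (**)}$ is applied), I first record two elementary facts. Writing $g(z)=h(z^2)$ with $h(u)=u+tu^2+\tfrac{t^2}{3}u^3$, one has $h'(u)=(1+tu)^2$, hence $g'(z)=2z(1+tz^2)^2$, which is nonzero on $\ba{\disc}\setminus\{0\}$ when $t<1$; thus $g$ is an immersion near $\bdy\disc$ and the fibres over the circle are finite. Moreover $g$ is even, so $\pm a$ are always two pre-images of $g(a)$ on $\bdy\disc$. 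The real task is therefore to decide, for each $t\in[0,1)$, whether some $g(a)$ has a third pre-image on $\bdy\disc$.

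For this I would factor, for $z_1,z_2\in\bdy\disc$,
\[
g(z_1)-g(z_2)=(z_1^2-z_2^2)\Big(1+t(z_1^2+z_2^2)+\tfrac{t^2}{3}(z_1^4+z_1^2z_2^2+z_2^4)\Big),
\]
so an extra pre-image exists precisely when the second factor vanishes for some $z_1,z_2\in\bdy\disc$ with $z_1\ne\pm z_2$. Putting $p=z_1^2$, $q=z_2^2$ (on $\bdy\disc$, $p\ne q$) and using $s=p+q=c\,w$, $r=pq=w^2$ with $c:=2\cos\dl\in(-2,2)$ and $w:=e^{i\psi}\in\bdy\disc$, the identity $p^2+pq+q^2=s^2-r=(c^2-1)w^2$ recasts the vanishing as
\[
\tfrac{t^2}{3}(c^2-1)\,w^2+t\,c\,w+1=0 .
\]
The key point is that, with $c,t$ real, this is a quadratic in $w$ with real coefficients; its roots are therefore either real or a conjugate pair. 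A conjugate pair lies on $\bdy\disc$ only when the product of the roots is $1$, i.e. $\tfrac{t^2}{3}(c^2-1)=1$, forcing $c^2=1+3/t^2>4$, which is impossible for $t<1$ with $c\in(-2,2)$. Hence for $t<1$ any root on the circle must be a \emph{real} root on the circle, namely $w=\pm1$.

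It remains to test $w=\pm1$. Substituting yields $t^2c^2\pm 3tc+(3-t^2)=0$; with $x:=tc$ this is $x^2\pm 3x+(3-t^2)=0$, of discriminant $4t^2-3$ in both cases. A real $c$ thus exists if and only if $t\ge\sqrt3/2$, and a short check shows that for $\sqrt3/2\le t<1$ the corresponding $c=\tfrac{\mp3\pm\sqrt{4t^2-3}}{2t}$ does lie in $(-2,2)$ (approaching the endpoint only as $t\to1$), so it genuinely yields $z_1\ne\pm z_2$ on $\bdy\disc$. Therefore a third pre-image appears exactly when $t\ge\sqrt3/2$, and none exists for $0\le t<\sqrt3/2$, which is the asserted equivalence. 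I expect the main obstacle to be the symmetrization bookkeeping---translating ``a third pre-image on $\bdy\disc$'' faithfully into a statement about $(c,w)$ with the correct open constraint $c\in(-2,2)$, and then pinning down that an admissible circle root is forced to be $w=\pm1$; once the real-coefficient structure is isolated, the threshold $\sqrt3/2$ emerges from the single discriminant $4t^2-3$.
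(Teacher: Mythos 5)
Your proof is correct, and while it starts from the same factorization as the paper --- dividing $g(z_1)-g(z_2)$ by $z_1^2-z_2^2$ and passing to a quadratic in the squared variables --- the way you decide when the residual factor has circle roots is genuinely different. The paper fixes $a=e^{i\psi}$, solves the quadratic in $w=z^2$ explicitly (its discriminant is $-\tfrac{t^2}{3}(1+ta^2)^2$, so the two roots $\lambda_{1,2}$ come out in closed form), and then computes $|\lambda_j|^2$ as a function of $\psi$; the condition $|\lambda_j|=1$ becomes $\tfrac{3}{t}+3\cos(2\psi-\pi/3)-\sqrt{3}\sin(2\psi-\pi/3)=0$, solvable precisely when $\tfrac{3}{t}\le 2\sqrt{3}$, i.e. $t\ge\sqrt{3}/2$. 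You instead symmetrize in the pair $(z_1,z_2)$ via $s=cw$, $r=w^2$ and exploit that the resulting quadratic in $w$ has \emph{real} coefficients: a root on $\bdy\disc$ must be $\pm1$ unless the roots form a conjugate pair of product $1$, which forces $c^2=1+3/t^2>4$ and is impossible for $t<1$; the threshold then drops out of the single discriminant $4t^2-3$. Your route avoids both the explicit root extraction and the trigonometric minimization, at the price of the endpoint verification $c\in(-2,2)$, which you correctly flag and which does reduce to $12(t-1)^2>0$. One thing the paper's explicit $\lambda_{1,2}$ buy that you should note: the proof of Theorem~\ref{thm-cubic-hull}$(iii)$ later uses not just that the fibre over the circle has four points for $\sqrt{3}/2\le t<1$ but that it is \emph{not} of the form $\{a,-a,ia,-ia\}$; in your parametrization this is still immediate, since that configuration would force $p+q=0$, i.e. $c=0$, whereas your quadratic $x^2\pm 3x+(3-t^2)=0$ in $x=tc$ has no root $x=0$ for $t<1$.
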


\begin{proof}[Proof of Lemma~\ref{lem-preimage}] 
Consider a point $a\in \bdy \mathbb{D}$. The equation $g_t(z)=g_t(a)$ gives us
\[
z^2+tz^4+\dfrac{t^2}{3}z^6 = a^2+ta^4+\dfrac{t^2}{3}a^6.
\]
If $t=0$, then there are only two roots $a,-a$ of the above equation. Therefore, we assume $t\neq 0$.
The above equation gives us
\begin{align*}
& \impl (z+a)(z-a)+t(z^4-a^4)+\dfrac{t^2}{3}(z^6-a^6)=0\\
& \impl(z-a)(z+a)(1+t(z^2+a^2)+\dfrac{t^2}{3}(z^4+a^2z^2+a^4))=0
\end{align*}
Hence, the set $g_t^{-1}\{g_t(a)\}$ consists of $a, -a$, and the roots of the following quartic equation: 
\[
\dfrac{t^2}{3}z^4+t(1+\dfrac{ta^2}{3})z^2+1+ta^2+\dfrac{t^2}{3}a^4=0.
\]
To find the roots of the above quartic we make a change of variable $w=z^2$, where 
$w$ is the new variable. The quartic now reduces to a quadratic equation: 
\[
\dfrac{t^2}{3}w^2+t(1+\dfrac{ta^2}{3})w+1+ta^2+\dfrac{t^2}{3}a^4=0.
\]
The roots are 
\[
w= -\dfrac{3}{2t}(1+\dfrac{ta^2}{3})\pm \dfrac{\sqrt{3} i}{2t}(1+ta^2) 
\]
Since $a\in\bdy \mathbb{D}$, $a=e^{i\psi}$ for some $\psi\in [0,2\pi]$. 
The roots of the above quadratic are: 
\begin{align*}
\lambda_1 &:= -\dfrac{3}{2t}(1+\dfrac{t}{3}e^{2i\psi})+ \dfrac{\sqrt{3} i}{2t}(1+te^{2i\psi}).\\
\lambda_2 &:= -\dfrac{3}{2t}\left(1+\dfrac{t}{3}e^{2i\psi}\right) -\dfrac{\sqrt{3} i}{2t}\left(1+te^{2i\psi}\right).
\end{align*}
The real and the imaginary parts of $\lambda_1$ are:
\begin{align*}
\rl{\lambda_1}&=-\dfrac{3}{2t}-\cos(2\psi-\pi/3).\\
\imag{\lambda_1}&= \dfrac{\sqrt{3}}{2t}-\sin(2\psi-\pi/3).
\end{align*}
Hence, 
\[
|\lambda_1|^2= \dfrac{3}{t^2}+\dfrac{3}{t}\cos(2\psi-\pi/3)-\dfrac{\sqrt{3}}{t}\sin(2\psi-\pi/3)+1.
\]
Thus, we have 
\[
|\lambda_1|=1\iff \dfrac{3}{t}+3\cos(2\psi-\pi/3)-\sqrt{3}\sin(2\psi-\pi/3) =0.
\]
We now consider a function $g:[0,2\pi]\to\rea$ defined by
\[
g(x):=3\cos x-\sqrt{3}\sin x.
\]
We need to find the minimum value of $g$ in $[0,2\pi]$. We compute 
$g'(x)=-3\sin x-\sqrt{3}\sin x$ and $g'(x)=0$ implies $x=\pi-\pi/6$ and $x=2\pi-\pi/6$. The function 
$g$ attains its minimum at $x=\pi-\pi/3$, and the minimum value is $-2\sqrt{3}$. Hence, 
$|\lambda|=1$ if and only if 
\[
3/t\leq2\sqrt{3} \iff t\geq \dfrac{\sqrt{3}}{2}.
\]
 By similar argument, we obtain that
 $|\lambda_2|^2=1$ if and only if 
 \[
 \dfrac{3}{t}+3\cos(2\psi+\pi/3)+\sqrt{3}\sin(2\psi+\pi/3)=0.
 \]
 That is, $|\lambda_2|=1$ if and only if $t\geq \dfrac{\sqrt{3}}{2}$. Hence, 
 the number of pre-image of $g_t(a)$ under $g_t$ is $2$ if and only if $0\leq t<\dfrac{\sqrt{3}}{2}$. 
 \end{proof}

We now present our proof of Theorem~\ref{thm-cubic-hull}.
\begin{proof}[Proof of Theorem~\ref{thm-cubic-hull}]
 We first 
  compute the Maslov-type index for $S_t$ at $0\in\cplx^2$. 
 Given $p_t(z,\ba{z})= z^2\ba{z}+t z\ba{z}^2+\dfrac{t^2}{3} \ba{z}^3$. 
 Putting $z=x+iy$, we get 
 \[
 \dfrac{\bdy p_t}{\bdy \ba{z}}(z,\ba{z})=(1+t)^2x^2-(1-t)^2 y^2+2i(1-t^2)xy.
 \]
 For $t\neq 1$, clearly, $\left(\partial p_t/\bdy \ba{z}\right)^{-1}\{0\}=\{0\}$.
 In this case, we have 
  \[
  q_t(z)=\dfrac{\bdy p_t}{\bdy \ba{z}}(z,1)=(z+t)^2.
  \]
 Therefore, using Result~\ref{res-maslov}, we obtain that the  Maslov-type index:
 \begin{equation}\label{eq-masindex}
 Ind_{S_t}(0)=\begin{cases}
                                  2 ,\; &\text{if}\; t<1\\
                                  -2, \;&\text{if}\; t>1.
                                  \end{cases}
                                  \end{equation}
\medskip

Next,
 we show that the function  $\phi_t(z,\ba{z}):=\rl\left(\dfrac{p_t(z,\ba{z})}{z}\right)$ is subharmonic in $\cplx\setminus\{0\}$ for 
$t<1$. We have
\[
\dfrac{p_t(z,\ba{z})}{z}=|z|^2+t\ba{z}^2+\dfrac{t^2}{3} \dfrac{\ba{z}^3}{z}.
\]
Hence, 
\[
\phi_t(z,\ba{z})= z\ba{z}+\dfrac{t}{2}(z^2+\ba{z}^2) + 
\dfrac{t^2}{6}\left(\dfrac{z^4+\ba{z}^4}{|z|^2}\right).
\]
\begin{align}
\dfrac{\bdy^2\phi_t}{\bdy z\bdy\ba{z}}(z,\ba{z})&=1-\dfrac{t^2(z^4+\ba{z}^4)}{|z|^4}\notag\\
&= 1-t^2\rl\left(\dfrac{z^2}{\ba{z}^2}\right) \label{eq-lapphi}.
\end{align}
Hence, in view of \eqref{eq-lapphi}, we get that $\phi_t$ is subharmonic in $\cplx\setminus\{0\}$ for all 
$t\in (0,1)$.
\smallskip

 $(i)$ Assume that $t<1$. In view of \eqref{eq-masindex}, the Maslov-type index 
$Ind_{S_t}(0)=2$ and $\rl\left(\dfrac{p_t(z,\ba{z})}{z}\right)$ is subharmonic in $\cplx\setminus\{0\}$
Therefore, applying part $(i)$ Result~\ref{res-wiegerinck},
we conclude that $S_t$ is not locally polynomially convex at the origin for $t<1$.   
\smallskip

 $(ii)$ By Part $(i)$, we already know that the local hull of $S_t$ is nontrivial. We will use 
Part $(ii)$ of Result~\ref{res-wiegerinck}. For that,
we consider the curve defined on the unit circle:
\begin{align}
\mathscr{C}_t(z)&=\dfrac{p_t(z,\ba{z})}{z^3} \notag\\
&= \ba{z}^2+t\ba{z}^4+\dfrac{t^2}{3}\ba{z}^6,\;\; |z|=1.\label{eq-curvec}
\end{align}
Since $t\in\rea$, we see that $\ba{\mathscr{C}_t(z)}=\mathscr{C}_t(\ba{z})$. 
We now consider 
the polynomial
\begin{equation}\label{eq-auxcurve}
g_t(z):=z^2+tz^4+\dfrac{t^2}{3}z^6.
\end{equation}
By Lemma~\ref{lem-preimage}, we obtain, under the condition $0\leq t<\sqrt{3}/2$, the 
pre-image of $g_t(a)$ under $g_t$ has exactly two points and  
the points are $a$ and $-a$ for any $a\in\bdy\disc$. Hence, if for any two distinct points 
$z_1, z_2\in\bdy\disc$, $\mathscr{C}_t(z_1)=\mathscr{C}_t(z_2)$, then $z_1=-z_2$. Therefore, 
$z_1, z_2$ divides the unit circle in two segments of lengh $\pi>\pi/2$. 
Hence, part $(ii)$ of Result~\ref{res-wiegerinck} applies in this situation. Therefore, for $0\leq t<\sqrt{3}/2$, the polynomial hull of $S_t\cap \ba{B(0;\dl)}$ contains a ball centred at the origin and with positive radius.
\smallskip

$(iii)$ We assume $\sqrt{3}/2\leq t<1$. In this case, using  Lemma~\ref{lem-preimage}, we obtain  
that the set $g_t^{-1}\{g_t(a)\}$ has more than two points in the unit circle. The proof of Lemma~\ref{lem-preimage} shows that it has exactly four points in the set $g_t^{-1}\{g_t(a)\}\cap \bdy\disc$ if $\sqrt{3}/2\leq t<1$, 
and the set is not of the form $\{a,-a,ia,-ia\}$. Hence,
the curve $\mathscr{C}_t(z)$ defined 
as in \eqref{eq-curvec} does not satisfy Property ${\sf (**)}$ in Result~\ref{res-wiegerinck}. Hence, by Part $(iii)$ 
of Result~\ref{res-wiegerinck}, we obtain, at least, a one parameter family of analytic discs with boundary lying in 
$S_t$ passing through the origin.
\end{proof}
 
 \begin{proof}[Proof of Theorem~\ref{thm-planes-hull}] 
 Let us fix a $\dl>0$ and let $K=(P_0^t\cup P_1^t\cup P_2^t)\cap \Phi_t^{-1}(B(0;\dl))$.  We note that 
 $\Phi_t^{-1}(\Phi_t(K))=K$.
 Clearly, $\Phi_t(K)\subset S_t\cap B(0;\dl)$ is a compact subset containing a neighbourhood the origin in $S_t$. 
 By Part $(i)$ of Theorem~\ref{thm-cubic-hull}, for $t<1$, $\Phi_t(K)$ is not polynomially convex. Therefore, 
 $K$ is not polynomially convex. Since $\dl>0$ is chosen arbitrarily, $P_0^t\cup P_1^t\cup P_2^t$ is not locally 
 polynomially convex at the origin.
 
 $(i)$ We consider $K$ as before. By Result~\ref{lem-propermap}, 
 we get that $\Phi_t(\hull{K})$ is polynomially convex. Since $\Phi_t(K)$ contains a neighbourhood of the origin
 in the relative topology of $S_t$, using Part $(ii)$ of Theorem~\ref{thm-cubic-hull}, we obtain that
  $\hull{\Phi_t(K)}$ contains a ball $B(0;r)$ for some $r>0$. Since $\Phi_t(\hull{K})$ is polynomially 
 convex and contains $\Phi_t(K)$. Hence, 
 \begin{equation}\label{eq-prop}
 \hull{\Phi_t(K)}\subset \Phi_t(\hull{K}).
 \end{equation}
 Therefore, $\Phi_t^{-1}(B(0;r))\subset \hull{K}$, which contains a neighbourhood of the origin in $\cplx^2$.
 
 $(ii)$ Using same 
 $K$ as in Part~$(i)$, we see that, for $\dfrac{\sqrt{3}}{2}\leq t<1$, $\hull{\Phi_t(K)}$ contains at least a one parameter family of analytic discs passing through the origin. We also have $\Phi_t^{-1}\{0\}=\{0\}$. 
 Therefore, in view of \eqref{eq-prop}, the pre-images of the analytic discs lie in $\hull{K}$. Boundary of the each of the pre-images of analytic discs passes through the origin. Pre-image of a disc is an analytic variety with boundary in $K$.
 \end{proof}
 
 \begin{proof}[Proof of Theorem~\ref{thm-surface-hull}]
  $(i)$ We assume $t<1$. 
  In view of Lemma~\ref{lem-maslovwinding}, we get that 
 \[
 Ind_{S_t}(0)=Ind_{M_t}(0).
 \]
 In the proof of Part $(i)$ of Theorem~\ref{thm-cubic-hull}, 
 the Maslov-type index for $S_t$ is computed.
 Hence, for $t\in (0,1)$, the Maslov-type index
 \[
 Ind_{M_t}(0)=2.
 \]
 We also know that $\rl{\dfrac{p_t(z,\ba{z})}{z}}$ is strictly subharmonic in $\cplx\setminus\{0\}$. Since a small
 $\smoo^2$-perturbation of strictly subharmonic function is again strictly subharmonic and 
 $F(z,\ba{z})\sim o(|z|^3)$ near the origin, we obtain that 
 $\rl{\dfrac{p_t(z,\ba{z})+F(z,\ba{z})}{z}}$ is strictly subharmonic in a small deleted neighbourhood of the origin.
 Therefore, by Result~\ref{res-wiegerinck1}, we conclude that there is an analytic disc with boundary in $M_t$. Hence, $M_t$ is not locally polynomially convex at the origin.
\medskip

$(ii)$
We will use Result~\ref{res-wiegnonhomo} to prove this.
We know that
\[
 Ind_{M_t}(0)=2 \quad\forall 0<t<1.
 \]
and $\rl{\dfrac{p_t(z,\ba{z})}{z}}$ is subharmonic but nowhere harmonic in $\cplx\setminus\{0\}$. 
Since $0<t<\sqrt{3}/2$, by Lemma~\ref{lem-preimage}, $S_t$ satisfies Property ${\sf(**)}$ (see Part $(ii)$ of Theorem~\ref{thm-cubic-hull} for computations). 
Hence all the conditions of Result~\ref{res-wiegnonhomo} are satisfied. Thus, for every $r>0$, the polynomial hull of $M_t\cap \ba{B(0;r)}$ contains a ball with centred at the origin and of positive radius.  
\medskip

$(iii)$ Assume $\dfrac{\sqrt{3}}{2}<t<1$.
We know that the 
Maslov-type index of $S_t$, the graph of lowest order polynomial of the local graphing function 
near the origin, is $2$. 
By Part $(i)$, we already know that there is an analytic disc with boundary on $M_t$. 
Actually, the proof of Result~\ref{res-wiegerinck1} in \cite{Wieg}
gives a one parameter family.
If the disc encloses the CR singularity at the origin, then, by using 
Result~\ref{res-andisc}, we obtain a three parameter family of analytic discs that will fill a neighbourhood of the origin. Otherwise, the boundaries of the discs must pass through the origin. 

\end{proof}

\section{Questions}\label{sec-questions}
In this section we mention few questions whose answers will make the picture clearer. The questions seem quite interesting (at least to the author) and classical. We 
begin with a conjecture.
\begin{conjecture}
 The surface $M_t$ is locally polynomially convex if $t>1$.
\end{conjecture}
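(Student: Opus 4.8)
The plan is to follow the same architecture as the proof of Theorem~\ref{thm-surface-pcvx}: pull $M_t$ back by the proper map $\Phi_t$ to a union of three pairwise transverse totally-real surfaces $S_0^t\cup S_1^t\cup S_2^t$, reduce to their tangent planes by Corollary~\ref{coro-totrealperturb}, and recover $M_t$ via Result~\ref{lem-propermap}. This reduces the conjecture to the purely linear-algebraic statement that $P_0^t\cup P_1^t\cup P_2^t$ is locally polynomially convex at the origin for $1<t^2\le\frac{15-\sqrt{33}}{8}$. The reason this range is not already covered is visible from \eqref{eq-detA}--\eqref{eq-detcomm}: here $\det[A_1^t,A_2^t]>0$ and $\det A_j^t<0$ with real spectra, but one checks $|\det A_j^t|=\frac{3-t^2}{3(t^2-1)}>1$ (since $t^2<3/2$ throughout the gap) and, from the computation in Case~II of the proof of Theorem~\ref{thm-3planes}, $\beta(A_1^t,A_2^t)\le\det A_2^t(\Tr A_1^t)^2$ because $4t^4-15t^2+12\ge0$ on this range. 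Thus Theorem~\ref{thm-planesdetlessone}, Theorem~\ref{thm-planeszw}, and Result~\ref{res-3planes} all fail simultaneously; one also finds $(\det A_j^t)\,\Theta(A_j^t,A_{j^{\mathsf{C}}}^t)>0$, so the second option of Result~\ref{res-3planes}(i) is unavailable as well.

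My first approach would be to manufacture a new Kallin polynomial valid in this regime. Putting $A_1^t$ into diagonal form $\mathrm{diag}(\lambda_1,\lambda_2)$ and $A_2^t$ into symmetric form via Lemma~\ref{lem-normal}, I would search for a separating polynomial of the general quadratic type $p(z,w)=\alpha z^2+\gamma zw+\delta w^2$, together with a rotation $e^{-i\theta}p$, rather than the two special choices $z^2+w^2$ and $zw$ used in Section~\ref{sec-unionplanes}. These two classical choices are exactly the degenerate extremes that break down in the gap: $z^2+w^2$ fails once $|\det A_j^t|>1$, and $zw$ fails once the $\beta$-inequality reverses, so the missing region should be reachable only by exploiting the cross term $\gamma zw$ and the angle $\theta$. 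Concretely, I would impose that $\imag(e^{-i\theta}p)$ is a (semi)definite quadratic form in $(x,y)$ on each of $K_1,K_2$, so that $p(K_1)$ and $p(K_2)$ lie in closed half-planes meeting $p(K_0)$ only at the origin and $p^{-1}\{0\}\cap(K_0\cup K_1\cup K_2)$ is a finite union of real segments; then verify the resulting inequalities on $\alpha,\gamma,\delta,\theta$ against the explicit matrices \eqref{eq-planemat} for $1<t^2\le\frac{15-\sqrt{33}}{8}$.

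A complementary route, specific to the surface rather than the planes, uses the Maslov-type index. By \eqref{eq-masindex} the index of $S_t$ at the origin equals $-2$ for every $t>1$, and by Lemma~\ref{lem-maslovwinding} the same holds for $M_t$. Since analytic discs whose boundaries wind around an isolated CR singularity are produced (via Result~\ref{res-andisc} and the Wiegerinck results) only when the index is positive, a negative index strongly suggests that no analytic structure can be attached, hence that the local hull is trivial. Turning this heuristic into a theorem---showing that a nonpositive Maslov index, together with the subharmonicity already verified in the proof of Theorem~\ref{thm-cubic-hull}, forces local polynomial convexity---would be an independent and more conceptual attack covering the whole range $t>1$ at once.

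The hard part, and the reason the statement is only a conjecture, is precisely that the gap $1<t^2\le\frac{15-\sqrt{33}}{8}$ is where every quadratic separating function and every spectral criterion used in this paper fails together; moreover, as $t\to1^{+}$ one has $\det A_j^t\to-\infty$, so the three planes degenerate and any uniform construction must survive this limit. I expect that closing the gap will require either a genuinely non-axis-aligned quadratic (or higher-degree) Kallin polynomial, whose feasible region I cannot yet guarantee to be nonempty, or a plurisubharmonic/peak-function argument driven by the negative Maslov index; deciding which of these succeeds is the central obstacle.
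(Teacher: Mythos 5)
The statement you are trying to prove is stated in the paper as a \emph{conjecture}: the author offers no proof (indeed the remark after Theorem~\ref{thm-surface-hull} explicitly says ``our technique does not work'' in this range), so there is no argument in the paper to compare yours against. Your diagnostic work is accurate and matches the paper's own computations: on $1<t^2\le\frac{15-\sqrt{33}}{8}$ one has $\det[A_1^t,A_2^t]>0$, $\det A_j^t<0$ with $|\det A_j^t|>1$ (since $t^2<3/2$), $4t^4-15t^2+12\ge 0$ so the $\beta$-inequality of Theorem~\ref{thm-planeszw} reverses, and $(\det A_j^t)\Theta(A_j^t,A_{j^{\sf C}}^t)>0$ (the bracket $-4t^6+7t^4-12t^2+9=(t^2-1)(-4t^4+3t^2-9)$ is negative for $t^2>1$), so Result~\ref{res-3planes}, Theorem~\ref{thm-planesdetlessone} and Theorem~\ref{thm-planeszw} all fail. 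The reduction scheme (pull back by $\Phi_t$, pass to tangent planes via Corollary~\ref{coro-totrealperturb}, return by Result~\ref{lem-propermap}) is exactly the paper's architecture and would indeed reduce the conjecture to a statement about the three planes.

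However, neither of your two routes is carried out, so there is a genuine gap at the decisive step. For the first route, you propose a general quadratic Kallin polynomial $\alpha z^2+\gamma zw+\delta w^2$ but concede you cannot guarantee the feasible region of $(\alpha,\gamma,\delta,\theta)$ is nonempty; without exhibiting an explicit polynomial and verifying the half-plane separation and the polynomial convexity of $p^{-1}\{0\}\cap(K_0\cup K_1\cup K_2)$ for the matrices \eqref{eq-planemat} on the whole gap, nothing is proved. For the second route, the inference from negative Maslov-type index to local polynomial convexity is not a theorem: the cited results (Result~\ref{res-andisc}, Results~\ref{res-wiegerinck1}--\ref{res-wiegnonhomo}) only produce discs when the index is positive, and the absence of attached analytic discs does not by itself imply that the hull is trivial (hulls need not carry analytic structure). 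So this route requires proving a new convexity criterion, which you do not attempt. In short: your proposal is a correct and well-informed research plan that isolates the obstruction, but it does not prove the conjecture, and the paper does not either.
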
 
 \noindent Theorem~\ref{thm-surface-pcvx} proves above 
 for $t> \sqrt{\dfrac{3}{2}}=1.2247..$. The only part remains open is for 
 $1<t\leq \sqrt{\dfrac{3}{2}}$. 

\begin{question}
Does there exist an example of a surface $M_t$ for $\sqrt{3}/2\leq t<1$ such that the local polynomial hull 
at the origin contains a nonempty open ball centred at the origin? Or does the discs attached to it always have  boundaries passing
through the origin?
\end{question}
\noindent Wiegerinck's result just gives a one parameter family on analytic discs. We do not know 
whether an if and only if condition for obtaining a three parameter family of analytic discs, analogous 
to Result~\ref{res-wiegerinck}, is possible when the higher order terms are present. In any case, it will be interesting to know the local hull of $M_t$ for $\sqrt{3}/2\leq t<1$.

\begin{question}
	What can one say about the fine structure of the local hull of $S_t$? Will it contain an interior point if $\dfrac{\sqrt{3}}{2}\leq t<1$? 
\end{question}

\begin{question}
	What can one say about the local polynomial convexity of $M_t$ for $t=1$, i.e., at the parabolic CR 
	singularity of higher order?
\end{question}

\begin{question} 
What can one say about the local hulls and local polynomial convexity about the surfaces with cubic lowest order homogeneous term that do not belong to this family? Does there exist any other one parameter family exhibiting Bishop-type dichotomy?
\end{question}

\begin{question}
Can one achieve  analogous results for surfaces with CR singularity of higher order, i.e. when $M_t$, locally at the origin, 
is of the form 
$\{(z,w)\in\cplx^2: w=(z+t\ba{z})^k+o(|z|^k)\},\; k>3$? 
\end{question}
\noindent In this case, one can not apply Result~\ref{res-wiegerinck1} to get nontrivial hull for $t<1$ as the subharmonicity condition that appears in Results~\ref{res-wiegerinck1}, \ref{res-wiegerinck}, and 
\ref{res-wiegnonhomo} is not automatic, which was the case for $k=3$.
\medskip

\noindent {\bf Acknowledgements.} I would like to thank Gautam Bharali for showing the proof of 
Lemma~\ref{lem-planesunique}. I would like to thank Buddhananda Banerjee for helping me with the software R 
to visualize the curve $C(z)$, which
confirmed my guess of the number $\sqrt{3}/2$.
This work is partially supported by 
Mathematical Research Impact Centric Support (MATRICS) grant, File No: MTR/2017/000974, by the Science and Engineering Research Board (SERB), Department of Science \& Technology (DST), Government of India.

\end{document}